\documentclass[11pt]{amsart}

\usepackage{amssymb, amsrefs}
\usepackage[foot]{amsaddr}
\usepackage{hyperref}
\usepackage{microtype}

\usepackage{tikz-cd}

\usepackage[shortlabels]{enumitem}
\usepackage[margin=1.5in]{geometry}

\newtheorem{theorem}{Theorem}[section]

\newtheorem{lemma}[theorem]{Lemma}
\newtheorem{proposition}[theorem]{Proposition}
\newtheorem{definition}[theorem]{Definition}
\newtheorem{corollary}[theorem]{Corollary}
\theoremstyle{definition}
\newtheorem{remark}[theorem]{Remark}
\newtheorem{question}[theorem]{Question}

\numberwithin{equation}{section}

\newcommand{\N}{\mathbb{N}}
\newcommand{\T}{\mathbb{T}}
\newcommand{\Z}{\mathbb{Z}}

\newcommand{\C}{\mathbb{C}}
\newcommand{\F}{\mathbb{F}}

\newcommand{\U}{\mathcal{U}}

\newcommand\freeprod{\operatornamewithlimits{\ast}}

\address{Department of Mathematics, University of Louisiana at Lafayette, Lafayette, 70503, USA}
\email{lrobert@louisiana.edu}

\title{Selfless C*-algebras}
\author[Leonel Robert]{Leonel Robert}

\begin{document}

\begin{abstract}
The aim of this note is to advertise a class of simple C*-algebras which includes noteworthy examples such as the Jiang-Su C*-algebra, the infinite dimensional UHF C*-algebras, the  reduced group C*-algebra of the free group in infinitely many generators, and the Cuntz algebras. 
\end{abstract}

\maketitle

In \cite{blackadar}, Blackadar considers whether there is a satisfactory extension of the Murray-von Neumann comparison theory for factors to the setting of simple C*-algebras. To this end, he introduces the property of strict comparison of positive elements by 2-quasitraces as a possible point of departure for a comparison theory in the C*-algebraic setting. Blackadar's inspired intuition has been confirmed by developments of the last three decades on the structure theory of simple C*-algebras, notably in the classification program for simple separable nuclear C*-algebras; see \cite{TomsCounterexample, BrownPereraToms, WinterPure, Matui-Sato, GongLinNiu, MannyHands}. Our overarching motivation in this paper has been to further Blackadar's program of investigating C*-algebras with a well-behaved comparison theory.

We introduce a new class of C*-probability spaces which we call \emph{selfless}. These C*-probability spaces are characterized by the existence of a copy of themselves in their ultrapower that is freely independent from the diagonal copy (thus being ``free from themselves''). We develop the foundational aspects of this concept using tools from free probability theory and the model theory of C*-algebras. We have drawn motivation for introducing the selfless class from the work of Dykema and R{\o}rdam on infinite reduced free products \cite{dykema-rordamII}, from their notion of eigenfree C*-probability space \cite{dykema-rordamGAFA}, and from Popa's theorem on the existence of free Haar unitaries in the tracial ultrapower of a II$_1$ factor \cite{popa}.

Given a selfless C*-probability space $(A,\rho)$, if  the state $\rho$ is faithful and non-tracial, then the C*-algebra $A$ is purely infinite, while if $\rho$ is a faithful trace, then $A$ has stable rank one and strict comparison with respect to $\rho$ (see Section \ref{dichotomy_sec}). In either case, $A$ has the strict comparison property. Thus, the selfless property provides a mechanism for obtaining strict comparison. Another by now well-understood source of strict comparison is tensorial absorption of the Jiang-Su C*-algebra \cite{rordam}.

In Section \ref{definition} we define selfless C*-probability spaces and examine multiple characterizations of this property. We show that infinite reduced free products of the kind considered by Dykema and R{\o}rdam (allowing also for possibly non-faithful states) result in selfless C*-probability spaces. In particular, $C_r^*(\mathbb F_\infty)$ is selfless (relative to its unique tracial state).

We obtain several permanence properties of the class of selfless C*-probability spaces (Section \ref{permanence}). Among them, we show that the reduced free product of a selfless $(A,\rho)$ with a $(C,\kappa)$, where $C$ is separable and $\kappa$ has faithful GNS representation, is again selfless. This relies on Skoufranis's ``free exactness'' theorem from \cite{skoufranis}.

In Section \ref{examples} we give several examples of selfless C*-probability spaces. In the non-tracial case, we show that $(A,\rho)$ is selfless whenever $A$ is a Kirchberg algebra and $\rho$ is a pure state. In the tracial case, we show that the Jiang-Su C*-algebra $\mathcal Z$, the  UHF C*-algebras, and the tracial ultrapowers of a separable II$_1$ factor, are selfless. It follows that the reduced free products
\[
(A,\rho)*(\mathcal Z,\tau)
\]
are selfless whenever $(A,\rho)$ is separable and has faithful GNS representation. If $\rho$ is moreover faithful, then these reduced free products are either purely infinite or have stable rank one and strict comparison, paralleling results due to R{\o}rdam for tensor products with $\mathcal Z$ \cite{rordam}.

In Section \ref{ultraembeddings}, we show that the class of separable C*-probability spaces that embed in the ultrapower of a selfless $(A,\rho)$ is closed under reduced free products. This gives a unified way of showing closure under reduced free products for the separable C*-algebras that embed in ultrapowers of $\mathcal O_2$, $\mathcal Q$, $\mathcal Z$, and $R_\omega$.

It has been recently shown in \cite{strongerdixmier} that in a II$_1$ factor every trace zero element can be expressed as a single commutator, and every element can be expressed as the sum of a normal and a nilpotent element. A reasonable analog of a II$_1$ factor in the C*-algebraic setting is a selfless $(A,\rho)$, where $\rho$ is a faithful trace and $A$ has real rank zero (equivalently, $A$ contains projections of arbitrarily small trace). Reinforcing this analogy, we show in Section \ref{commutators} that for such an $(A,\rho)$ the single commutators form a dense set in $\ker \rho$, and that the sums of a normal and a nilpotent element form a dense set in $A$.

A primary motivation for this paper has been the well-known problem of whether the reduced group C*-algebras $C_r^*(\mathbb F_n)$
have strict comparison. This longstanding question has been finally solved in the remarkable work \cite{AGKP}. The authors
in fact show that these C*-algebras, along with a wealth of new examples arising as reduced group C*-algebras,  are selfless.
 
 \textbf{Acknowledgment}: I am grateful to Ilijas Farah for useful feedback on this paper and for prompting me to look into the
 question of axiomatizability of the selfless class. I am grateful to the referees for their helpful suggestions, which 
 improved the exposition of the paper.

\section{Preliminaries on model theory and free probability}
By a C*-probability space we mean a pair $(A,\rho)$, where $A$ is a unital C*-algebra and $\rho$ is a state on $A$. By a  morphism $\theta \colon (A, \rho) \to (B, \tau)$ between C*-probability spaces we understand a unital *-homomorphism between the C*-algebras that preserves the states, i.e.,  such that $\tau \circ \theta = \rho$. We will deal mostly with embeddings of C*-probability spaces, i.e.,  where $\theta$ is assumed to be a C*-algebra embedding.

We will make use of some notions from the model theory of C*-algebras. 
We will briefly summarize some aspects of this theory here, but refer the  reader  to \cite{Cstarmodel} and \cite{OpAlgmodel} for 
further details. 

The language for unital C*-algebras comes equipped with 
\begin{itemize}
	\item a sequence of sorts $(S_n)_{n=1}^\infty$, which in a C*-algebra are interpreted 
	as the closed balls with center the origin and radius $n$, and with the metric given by the norm,
	\item 
	constants 0 and 1, interpreted as the neutral elements of addition and multiplication,
	\item 
	function symbols 
	\begin{align*}
		&+\colon S_m\times S_n\to S_{m+n}, \quad
		 \cdot \colon S_m\times S_n\to S_{mn}, \\
		  &*\colon S_n\to S_n,\quad
\cdot_{\lambda}\colon S_n\to S_{\lceil |\lambda|n\rceil}  \;(\lambda\in\C),
\end{align*}  
for addition, multiplication, involution, and the scalar multiplication by every $\lambda\in \C$ (each equipped with suitable uniform continuity moduli),  and also function symbols  $\iota_{m,n}\colon S_m\to S_n$ for inclusions between different balls, 
	\item 
	no relation symbols (alternatively, a relation symbol can be introduced interpreted as the norm of the C*-algebra).
\end{itemize}	
Formulas are built recursively by continuous connectives and the quantifiers
\(\sup_{\bar x\in S}\) and \(\inf_{\bar x\in S}\) \cite[Definition 2.1.1]{Cstarmodel}. We note that the quantifier free formulas take the form
\[
f\bigl(\|p_1(\bar x)\|,\dots,\|p_k(\bar x)\|\bigr),
\]
where each $p_i$ is a $*$-polynomial in the tuple of variables $\bar x=(x_1,\ldots,x_n)$ (each  $x_j$ is declared to lie in a sort $S_{n_j}$) and $f\colon \mathbb R^k\to\mathbb R$ is continuous. The axioms for the theory of C*-algebras are discussed
in \cite[Example 2.2.1]{Cstarmodel}. We note that they are $\forall$- and $\forall\exists$-sentences.

To build a language for C*-probability spaces, we  enlarge the language of unital C*-algebras with a relation  symbol $\rho$ with modulus of uniform continuity $\Delta_\rho(\epsilon)=\epsilon$. The structures  $(A,\rho)$ of this language  such  that $A$ is a unital C*-algebra and $\rho$ is a state on $A$ form an axiomatizable class. Indeed, the list of axioms for these structures  consists of the axioms for unital C*-algebras together with  $\forall$-sentences enforcing that $\rho$ is linear, positive, and preserves the unit.
Quantifier-free formulas in this expanded language are built by applying continuous functions to finitely many 
expressions of the forms $\|p(\bar x)\|$ and  $\rho(p(\bar x))$.


\begin{definition}
A unital embedding of C*-probability spaces $\theta\colon (A,\rho) \to  (B,\tau)$ is called existential if for any quantifier-free formula $\Phi(\bar x,\bar y)$ in the language of C*-probability spaces and  tuple $\bar a$ in the unit ball of  $A$, we have
\begin{equation}\label{pos-exist}
\inf_{ \bar y} \Phi(\bar a,\bar y)^{(A,\rho)}=\inf_{\bar y} \Phi(\theta(\bar a),\bar y)^{(B,\tau)}, 
\end{equation}
where the tuple $\bar y$ ranges through the unit ball of $A$ on the left-hand side, and the unit ball of $B$ on the right-hand side.
\end{definition}

Given an  ultrafilter $\U$ (on a set $I$),  the ultrapower C*-probability space  $(A, \rho)^\U$   is $(A^\U, \rho^\U)$, where
\[
\rho^\U([(x_i)_{i\in I}])=\lim_\U \rho(x_i).
\]
A less model-theoretically inclined reader may prefer the following characterization of existential embedding, which avoids reference to formulas in the language of C*-probability spaces: $\theta\colon (A,\rho)\to (B,\tau)$ is existential  if there exists an ultrafilter $\U$ and an \emph{embedding}  $\sigma\colon (B,\tau)\to (A^{\U},\rho^{\U})$ into the ultrapower of $(A,\rho)$, such that $\sigma\theta$ agrees with the diagonal embedding of $(A,\rho)$ in $(A^\U,\rho^{\U})$:
\begin{equation}\label{sigmatheta}
\begin{tikzcd}
(A, \rho) \arrow[rr, "\iota"] \arrow[rd, "\theta"] & & (A^{\mathcal{U}}, \rho^{\mathcal{U}}) \\
& (B, \tau) \arrow[ru, "\sigma"] &
\end{tikzcd}
\end{equation}
See \cite[Section 2]{goldbring-sinclair} for a discussion of existential embeddings,  and  their relation to approximate splitting
through the ultrapower; see also \cite[Theorem 4.19]{barlak-szabo} for the related notion of sequentially split embedding.

We will make use of some elementary facts about existential embeddings that we state in the form of three lemmas, for ease of reference.

\begin{lemma}\label{lemexistential}
Let $\theta\colon (A,\rho)\hookrightarrow (B,\tau)$ and $\sigma\colon (B,\tau)\hookrightarrow (C,\kappa)$ be C*-probability space embeddings.
\begin{enumerate}[(i)]
\item
If $\sigma\theta$ is existential, then $\theta$ is existential.
\item
If both $\theta$ and $\sigma$ are existential, then so is $\sigma\theta$.
\end{enumerate}
\end{lemma}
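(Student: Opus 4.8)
The plan is to argue directly from the definition of existential embedding, the only background input being the elementary fact that every embedding of C*-probability spaces preserves quantifier-free formulas. Indeed, if $\theta\colon(A,\rho)\hookrightarrow(B,\tau)$ is such an embedding, then it is isometric and satisfies $\tau\circ\theta=\rho$, so $\|p(\theta(\bar a))\|=\|p(\bar a)\|$ and $\tau(p(\theta(\bar a)))=\rho(p(\bar a))$ for every $*$-polynomial $p$ and tuple $\bar a$ in $A$; since quantifier-free formulas are continuous functions of such norms and state-values, $\Phi(\bar a,\bar b)^{(A,\rho)}=\Phi(\theta(\bar a),\theta(\bar b))^{(B,\tau)}$ for all tuples $\bar a,\bar b$ in $A$. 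Consequently, for any quantifier-free $\Phi$ and any $\bar a$ in the unit ball of $A$, restricting the infimum on the $A$-side to the image under $\theta$ of the unit ball of $A$ (a subset of the unit ball of $B$) yields the automatic inequality $\inf_{\bar y}\Phi(\bar a,\bar y)^{(A,\rho)}\ge\inf_{\bar y}\Phi(\theta(\bar a),\bar y)^{(B,\tau)}$; existentiality of $\theta$ is exactly the assertion that this is an equality.

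For part (i), assume $\sigma\theta$ is existential, and fix a quantifier-free $\Phi$ and a tuple $\bar a$ in the unit ball of $A$. Since $\sigma$ preserves quantifier-free formulas and carries the unit ball of $B$ into the unit ball of $C$, one gets
\[
\inf_{\bar y}\Phi(\theta(\bar a),\bar y)^{(B,\tau)}
=\inf_{\bar y}\Phi(\sigma\theta(\bar a),\sigma(\bar y))^{(C,\kappa)}
\ge\inf_{\bar y}\Phi(\sigma\theta(\bar a),\bar y)^{(C,\kappa)}
=\inf_{\bar y}\Phi(\bar a,\bar y)^{(A,\rho)},
\]
where the last equality uses existentiality of $\sigma\theta$, and the ranges of $\bar y$ are the unit balls of $B$, $B$, $C$, $A$, respectively. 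Combined with the automatic inequality from the first paragraph, this forces $\inf_{\bar y}\Phi(\bar a,\bar y)^{(A,\rho)}=\inf_{\bar y}\Phi(\theta(\bar a),\bar y)^{(B,\tau)}$, i.e.\ $\theta$ is existential.

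For part (ii), assume $\theta$ and $\sigma$ are existential, and fix $\Phi$ and $\bar a$ in the unit ball of $A$. As $\theta$ is isometric, $\theta(\bar a)$ lies in the unit ball of $B$, so existentiality of $\sigma$ applied to the tuple $\theta(\bar a)$ gives $\inf_{\bar y}\Phi(\theta(\bar a),\bar y)^{(B,\tau)}=\inf_{\bar y}\Phi(\sigma\theta(\bar a),\bar y)^{(C,\kappa)}$, while existentiality of $\theta$ gives $\inf_{\bar y}\Phi(\bar a,\bar y)^{(A,\rho)}=\inf_{\bar y}\Phi(\theta(\bar a),\bar y)^{(B,\tau)}$; chaining these two equalities shows $\sigma\theta$ is existential.

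There is no serious obstacle here: the argument is a short inequality chase, and the only things needing care are the bookkeeping of which unit ball each quantified tuple ranges over and the (routine) preservation of quantifier-free formulas. One could instead deduce everything from the ultrapower characterization encoded in diagram \eqref{sigmatheta}, but for (ii) that route requires composing maps through an iterated ultrapower (or passing to a common refining ultrafilter), which is more cumbersome than the direct computation above; I would therefore present the formula-based proof.
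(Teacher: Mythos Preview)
Your proof is correct; the paper itself simply declares this lemma an easy exercise and gives no argument, so your formula-chase is exactly the kind of verification the reader is expected to supply. Your remark about the ultrapower route being more cumbersome for (ii) is also apt and matches why the direct approach is preferable here.
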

\begin{proof}The proof is an easy exercise.\end{proof}

Given C*-probability spaces $(A_i,\rho_i)_{i\in I}$, and an ultrafilter $\U$ on $I$, their ultraproduct $\prod_\U (A_i,\rho_i)$ is 
the C*-probability space $(\prod_\U A_i,\rho_\U)$, where $\rho_\U$ is the limit of the states $(\rho_i)_i$ along $\U$.  

\begin{lemma}\label{ultraexist}
	If the embeddings $\theta_i\colon (A_i,\rho_i)\hookrightarrow (B_i,\tau_i)$ are existential for all $i\in I$, and
	$\U$ is an ultrafilter on $I$, then the induced embedding $\theta_U\colon \prod_\U (A_i,\rho_i)\hookrightarrow \prod_\U (B_i,\tau_i)$
	is existential
\end{lemma}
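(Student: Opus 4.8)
The plan is to derive the lemma from the syntactic characterization of existential embeddings together with the continuous version of \L o\'s's theorem (the fundamental theorem of ultraproducts for metric structures; see \cite{Cstarmodel, OpAlgmodel}). Write $(A,\rho):=\prod_\U (A_i,\rho_i)$ and $(B,\tau):=\prod_\U (B_i,\tau_i)$. First I would dispatch the routine preliminaries: the map $\theta_\U\colon [(a_i)_i]\mapsto [(\theta_i(a_i))_i]$ is well defined, isometric (each $\theta_i$ is), and state-preserving (since $\lim_\U \tau_i(\theta_i(a_i))=\lim_\U \rho_i(a_i)$ for every bounded sequence), hence it is indeed an embedding of C*-probability spaces.

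Next, given a quantifier-free formula $\Phi(\bar x,\bar y)$ in the language of C*-probability spaces and a tuple $\bar a$ in the unit ball of $A$, I would proceed as follows. Because the interpretation of the sort $S_1$ in an ultraproduct of metric structures is the ultraproduct of the interpretations of $S_1$ in the factors, I may pick a representative $\bar a=[(\bar a_i)_i]$ with each $\bar a_i$ in the unit ball of $A_i$. Consider the formula $\Psi(\bar x):=\inf_{\bar y}\Phi(\bar x,\bar y)$, with $\bar y$ ranging over the sort $S_1$. By \L o\'s's theorem applied in $A$ and in $B$,
\[
\Psi(\bar a)^{A}=\lim_\U \Psi(\bar a_i)^{(A_i,\rho_i)}\qquad\text{and}\qquad \Psi(\theta_\U(\bar a))^{B}=\lim_\U \Psi(\theta_i(\bar a_i))^{(B_i,\tau_i)}.
\]
Since each $\theta_i$ is existential and each $\bar a_i$ lies in the unit ball of $A_i$, the sequences inside these ultralimits agree termwise: $\Psi(\bar a_i)^{(A_i,\rho_i)}=\Psi(\theta_i(\bar a_i))^{(B_i,\tau_i)}$ for all $i\in I$. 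Therefore $\Psi(\bar a)^{A}=\Psi(\theta_\U(\bar a))^{B}$, which is precisely identity \eqref{pos-exist} for the embedding $\theta_\U$. As $\Phi$ and $\bar a$ were arbitrary, $\theta_\U$ is existential.

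I do not expect a genuine obstacle here: the content is essentially bookkeeping around \L o\'s's theorem, and the only points deserving a moment's care are that $\inf_{\bar y}\Phi(\bar x,\bar y)$ (with $\bar y$ in $S_1$) is a legitimate formula of the language and that closed unit balls commute with ultraproducts — both standard in the continuous model theory of C*-algebras. One could alternatively argue through the ultrapower picture \eqref{sigmatheta}, choosing for each $i$ a splitting $\sigma_i\colon(B_i,\tau_i)\hookrightarrow (A_i^{\U_i},\rho_i^{\U_i})$ of $\theta_i$; but reassembling these into a single splitting of $\theta_\U$ forces one to manage the family $(\U_i)_i$ simultaneously with the index ultrafilter $\U$, so the \L o\'s's-theorem route is the cleaner one.
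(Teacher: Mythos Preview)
Your proof is correct and follows essentially the same approach as the paper: both arguments apply \L o\'s's theorem to the formula $\inf_{\bar y}\Phi(\bar x,\bar y)$ on each side of the ultraproduct and use the termwise existentiality of the $\theta_i$ to pass between them. Your write-up is more explicit about the preliminaries (well-definedness of $\theta_\U$, choice of representatives in the unit ball) and flags the points requiring care, but the core chain of equalities is identical to the paper's.
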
	

\begin{proof}
Let $\Phi(\bar x,\bar y)$ be a quantifier-free formula and let $\bar a\in \prod_\U A_i$. Then
\begin{align*}
\inf_{\bar y} \Phi(\theta_\U(\bar a), \bar y)^{(\prod_\U (B_i,\tau_i))} &=\lim_\U \inf_{\bar y} \Phi(\theta_i(\bar a_i),\bar y)^{(B_i,\tau_i)}\\
&= \lim_\U\inf_{\bar y} \Phi(\bar a_i,\bar y)^{(A_i,\rho_i)}\\
& = \inf_{\bar y} \Phi(\bar a, \bar y)^{(\prod_\U (A_i,\rho_i))},
\end{align*}
where we have used \L{}o\'s's theorem \cite[Theorem 2.3.1]{Cstarmodel} at the first and last equality, and the fact that the embeddings are existential at the intermediate equality.	
\end{proof}	

For the purpose of simplifying the statement of the following lemma, let us introduce a variation on the notion of being existential: Given $(A,\rho)$,  a C*-subalgebra $C\subseteq A$, and an embedding $\theta\colon (C,\rho|_{C})\hookrightarrow (B,\tau)$, let us  say that $\theta$ is relatively existential in $(A,\rho)$ if \eqref{pos-exist}  holds for all tuples $\bar a$ in $C$. This is equivalent to asking that  for some ultrafilter $\U$ there exists an embedding $\sigma\colon (B,\tau)\to (A^{\U},\rho^{\U})$ such 
that $\sigma\theta$ coincides with the diagonal embedding of $(C,\rho|_{C})$ in $(A^{\U},\rho^{\U})$.

\begin{lemma}\label{unusedsymbol}
Let  $\theta\colon (A, \rho)\to (B,\tau)$ be an embedding of C*-probability spaces. Suppose that  
$A=\overline{\bigcup_{i\in I} A_i}$ and $B=\overline{\bigcup_{i\in I} B_i}$, for upward directed families of unital C*-subalgebras
$(A_i)_{i\in I}$ and $(B_i)_{i\in I}$ such that $\theta(A_i)\subseteq B_i$ and the embeddings 
\[
\theta|_{A_i}\colon (A_i,\rho|_{A_i})\hookrightarrow (B_i,\tau|_{B_i})
\] 
are relatively existential in $(A,\rho)$ for all $i$. Then $\theta$ is existential.
 \end{lemma}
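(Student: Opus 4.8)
The plan is to verify the defining equality \eqref{pos-exist} directly, at the level of formulas, rather than trying to splice together the ultrapower embeddings coming from relative existentiality of the various $\theta|_{A_i}$ (which need not be mutually compatible). Fix a quantifier-free formula $\Phi(\bar x,\bar y)$ and a tuple $\bar a$ in the unit ball of $A$. One inequality, $\inf_{\bar y}\Phi(\bar a,\bar y)^{(A,\rho)}\ge \inf_{\bar y}\Phi(\theta(\bar a),\bar y)^{(B,\tau)}$, is automatic: $\theta$ maps the unit ball of $A$ into that of $B$ and preserves quantifier-free formulas, so $\Phi(\bar a,\bar c)^{(A,\rho)}=\Phi(\theta(\bar a),\theta(\bar c))^{(B,\tau)}\ge \inf_{\bar y}\Phi(\theta(\bar a),\bar y)^{(B,\tau)}$ for every $\bar c$ in the unit ball of $A$, and one takes the infimum over $\bar c$. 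So the real task is the reverse inequality.

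To prove it, fix $\epsilon>0$ and choose $\bar b$ in the unit ball of $B$ with $\Phi(\theta(\bar a),\bar b)^{(B,\tau)}\le \inf_{\bar y}\Phi(\theta(\bar a),\bar y)^{(B,\tau)}+\epsilon$. Quantifier-free formulas are uniformly continuous on the sorts with a modulus depending only on $\Phi$, and consequently so is the $\inf$-formula $\bar x\mapsto\inf_{\bar y}\Phi(\bar x,\bar y)$; hence, given a small enough approximation parameter $\delta$, density of $\bigcup_i A_i$ and $\bigcup_i B_i$ lets me find indices $i_0,i_1$, a tuple $\bar a'$ in the unit ball of $A_{i_0}$ with $\|\bar a-\bar a'\|<\delta$, and a tuple $\bar b'$ in the unit ball of $B_{i_1}$ with $\|\bar b-\bar b'\|<\delta$ (the unit-ball requirement costs nothing, e.g.\ after rescaling by a positive scalar). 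By upward directedness pick $j\in I$ with $j\ge i_0,i_1$, so that $\bar a'\in A_j$, $\theta(\bar a')\in\theta(A_j)\subseteq B_j$ and $\bar b'\in B_j$, and apply relative existentiality of $\theta|_{A_j}\colon(A_j,\rho|_{A_j})\hookrightarrow(B_j,\tau|_{B_j})$ in $(A,\rho)$ to the tuple $\bar a'$:
\[
\inf_{\bar y}\Phi(\bar a',\bar y)^{(A,\rho)}=\inf_{\bar y}\Phi(\theta(\bar a'),\bar y)^{(B_j,\tau|_{B_j})}\le \Phi(\theta(\bar a'),\bar b')^{(B_j,\tau|_{B_j})}=\Phi(\theta(\bar a'),\bar b')^{(B,\tau)},
\]
where the last equality holds because $\Phi$ is quantifier-free and $\theta(\bar a'),\bar b'$ both lie in $B_j$. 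Feeding in the continuity estimates that compare $\bar a'$ with $\bar a$ (inside $A$, and — since $\theta$ is isometric — inside $B$ after applying $\theta$) and $\bar b'$ with $\bar b$, one concludes
\[
\inf_{\bar y}\Phi(\bar a,\bar y)^{(A,\rho)}\le \Phi(\theta(\bar a),\bar b)^{(B,\tau)}+\eta(\delta)\le \inf_{\bar y}\Phi(\theta(\bar a),\bar y)^{(B,\tau)}+\epsilon+\eta(\delta),
\]
with $\eta(\delta)\to0$ as $\delta\to0$. Letting $\delta\to0$ and then $\epsilon\to0$ gives the reverse inequality, so $\theta$ is existential.

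The only point requiring care is keeping track of which structure — equivalently, which unit ball — each infimum ranges over, and this is exactly where the hypothesis ``relatively existential \emph{in} $(A,\rho)$'', as opposed to merely ``$\theta|_{A_i}$ existential'', is indispensable: it hands us $\inf_{\bar y}\Phi(\bar a',\bar y)^{(A,\rho)}$ on the nose, which is precisely the quantity to be estimated, so there is no need to compare an infimum over $A_j$ with one over $A$. The remaining ingredients — uniform continuity moduli and the preservation of quantifier-free formulas under the inclusions $A_i\subseteq A$ and $B_j\subseteq B$ and under the embedding $\theta$ — are entirely routine.
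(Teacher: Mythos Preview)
Your proof is correct and follows essentially the same approach as the paper's: verify \eqref{pos-exist} directly at the level of formulas by using density of $\bigcup_i A_i$ and $\bigcup_i B_i$ to reduce to tuples in some $A_j$ and $B_j$, then invoke relative existentiality of $\theta|_{A_j}$. The paper's version is just more compressed: it first reduces to $\bar a\in A_i$ by uniform continuity, then rewrites $\inf_{\bar y}\Phi(\theta(\bar a),\bar y)^{(B,\tau)}$ as $\inf_{j\ge i}\inf_{\bar y}\Phi(\theta(\bar a),\bar y)^{(B_j,\tau)}$ rather than approximating a near-minimizer $\bar b$ explicitly.
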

\begin{proof}
Cf. \cite[Proposition 2.7]{barlak-szabo}.

Fix a quantifier-free formula $\Phi(\bar x, \bar y)$. To verify \eqref{pos-exist}, it suffices to let the entries of the tuple $\bar a$ range in the unit ball of the dense subset $\bigcup_{i\in I}  A_i$. Assume thus that
 $\bar a$ is a tuple of elements in the unit ball of $A_i$ for some $i$. Then  
\begin{align*}
\inf_{\bar y} \Phi(\theta(\bar a),\bar y)^{(B,\tau)} &=
\inf_{j\geq i} \inf_{\bar y} \Phi(\theta(\bar a),\bar y)^{(B_j,\tau)}\\  
&\geq \inf_{j\geq i} \inf_{\bar y} \Phi(\bar a,\bar y)^{(A, \rho)} = \inf_{\bar y}\Phi(\bar a,\bar y)^{(A,\rho)}.\qedhere
\end{align*}
\end{proof}

We will make use of some notions from free probability theory. We refer the  reader  to \cite{dykema-nica-voiculescu} for an introduction to  this theory.
Given C*-probability spaces $(A_i,\rho_i)_{i\in I}$, where each $\rho_i$ induces a  faithful GNS representation,  
their reduced free product 
\[
(A,\rho)=\freeprod_{i\in I}(A_i,\rho_i)
\] 
is a C*-probability space equipped with embeddings  $\theta_i\colon (A_i,\rho_i)\to (A,\rho) $, for $i\in I$, such that 
the C*-subalgebras  $\{\theta_i(A_i):i\in I\}$ are freely independent relative to the free product state $\rho$  \cite{avitzour,voiculescu}. All the free products that we shall consider are reduced free products.  Sometimes, by a slight abuse of notation, we write 
$(\freeprod_{i\in I}A_{i},\freeprod_{i\in I}\rho_{i})$,  or simply $\freeprod_{i\in I}A_{i}$ if the states $\rho_i$ have been fixed or can be inferred from the context.

\begin{remark}
We shall always assume, of our C*-probability spaces, that the states induce faithful GNS representations. 
The only exception to this rule  will be  C*-probability spaces obtained as an ultrapower 
or an ultraproduct.
\end{remark}

We will frequently, and tacitly, use the following theorem of Blanchard and Dykema \cite[Theorem 3.1]{blanchard-dykema},
ensuring that we can take reduced free products of embeddings of C*-probability spaces:

\begin{theorem}\label{thm-blanchard-dykema}
Let $\theta_i\colon (A_i, \rho_i) \hookrightarrow (B_i, \tau_i)$, for $i\in I$, be C*-probability space embeddings, where the states $\rho_i$ and $\tau_i$ induce faithful GNS representations for all $i$. Set  $(A, \rho) = \freeprod_i (A_i, \rho_i)$ and $(B, \tau) = \freeprod_i (B_i, \tau_i)$. Then, there exists an embedding $\pi \colon (A, \rho) \to (B, \tau)$ such that the following diagrams commute for all $i$:
\[
\begin{tikzcd}
B_i\ar[r, hook] & B\\
A_i\ar[u, hook, "\theta_i"]\ar[r,hook] & A\ar[u, hook, "\pi"] 
\end{tikzcd}
\]
\end{theorem}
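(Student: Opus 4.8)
The plan is to construct $\pi$ on the level of the free product Hilbert spaces, following Voiculescu's model for the reduced free product.

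For each $i$ fix GNS triples $(\pi_{\rho_i},H_i,\xi_i)$ for $(A_i,\rho_i)$ and $(\pi_{\tau_i},K_i,\eta_i)$ for $(B_i,\tau_i)$; by hypothesis $\pi_{\rho_i}$ and $\pi_{\tau_i}$ are faithful. Since $\tau_i\circ\theta_i=\rho_i$, the prescription $\pi_{\rho_i}(a)\xi_i\mapsto\pi_{\tau_i}(\theta_i(a))\eta_i$ defines an isometry $V_i\colon H_i\to K_i$ with $V_i\xi_i=\eta_i$ and $V_i\pi_{\rho_i}(a)=\pi_{\tau_i}(\theta_i(a))V_i$ for all $a\in A_i$; in particular $V_i$ carries $H_i\ominus\C\xi_i$ into $K_i\ominus\C\eta_i$. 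Form the free product Hilbert spaces $(H,\xi)=\freeprod_i(H_i,\xi_i)$ and $(K,\eta)=\freeprod_i(K_i,\eta_i)$, with their free product representations $\lambda$ of $A$ on $H$ and $\mu$ of $B$ on $K$, so that $\rho=\langle\lambda(\,\cdot\,)\xi,\xi\rangle$ and $\tau=\langle\mu(\,\cdot\,)\eta,\eta\rangle$. Because each $V_i$ sends $\C\xi_i$ to $\C\eta_i$ and $H_i\ominus\C\xi_i$ into $K_i\ominus\C\eta_i$, applying the $V_i$ letterwise on the alternating tensors yields an isometry $V\colon H\to K$ with $V\xi=\eta$.

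Next one checks that $V$ intertwines the building blocks: $V\lambda_i(a)=\mu_i(\theta_i(a))V$ for $a\in A_i$. This follows from the compatibility of $V$ with the canonical identifications $H\cong H_i\otimes H(i)$ and $K\cong K_i\otimes K(i)$ (where $H(i)$, $K(i)$ are spanned by the vacuum and the words not beginning with $i$) together with $V_i\pi_{\rho_i}(a)=\pi_{\tau_i}(\theta_i(a))V_i$. Extending multiplicatively and linearly, $V\lambda(x)=\mu(\pi_0(x))V$ for every $x$ in the algebraic unital free product $\mathcal A_0$ of the $A_i$, where $\pi_0\colon\mathcal A_0\to B$ is the $*$-homomorphism induced by the maps $\theta_i$ followed by the inclusions $B_i\hookrightarrow B$. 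From this intertwining one reads off two things. First, since $V$ is isometric, $\|x\|_A=\|\lambda(x)\|=\|V\lambda(x)\|\le\|\mu(\pi_0(x))\|=\|\pi_0(x)\|_B$. Second, $V(H)$ is invariant under $\mu(\pi_0(x))$ and under $\mu(\pi_0(x))^*=\mu(\pi_0(x^*))$, hence reducing, so the projection $p=VV^*$ lies in the commutant of $D:=C^*\bigl(\bigcup_i\mu_i(\theta_i(A_i))\bigr)=\overline{\mu(\pi_0(\mathcal A_0))}\subseteq B$. Consequently $\Psi\colon D\to\mathcal B(H)$, $\Psi(d)=V^*dV$, is a unital $*$-homomorphism, and the adjoint relation $\lambda_i(a)V^*=V^*\mu_i(\theta_i(a))$ gives $\Psi(\mu_i(\theta_i(a)))=\lambda_i(a)$, so $\Psi$ maps $D$ onto $A$.

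It remains to prove that $\Psi$ is injective, equivalently (reversing the displayed inequality) that $\|\pi_0(x)\|_B\le\|x\|_A$ on $\mathcal A_0$, equivalently that the representation $\mu\circ\pi_0$ of $\mathcal A_0$ is weakly contained in $\lambda$. Granting this, $\pi_0$ extends to a $*$-homomorphism $\pi\colon A\to B$; by continuity the intertwining persists, $Va=\pi(a)V$ for every $a\in A$, whence $\|a\|=\|Va\|=\|\pi(a)Va\,\|$-type estimates give $\|a\|\le\|\pi(a)\|\le\|a\|$, so $\pi$ is isometric, hence an embedding. State preservation follows from $\langle\mu(\pi(a))\eta,\eta\rangle=\langle Va\xi,V\xi\rangle=\langle a\xi,\xi\rangle=\rho(a)$, and the square commutes by the definition of $\pi_0$. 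For the weak containment, note that $\mu\circ\pi_0$ is precisely the free product representation built from the representations $\nu_i:=\pi_{\tau_i}\circ\theta_i$ of $A_i$ on $K_i$, pointed at $\eta_i$: each $\nu_i$ is faithful with $\langle\nu_i(\,\cdot\,)\eta_i,\eta_i\rangle=\rho_i$, and it decomposes as $\pi_{\rho_i}$ on the cyclic subspace $V_i(H_i)=\overline{\nu_i(A_i)\eta_i}$ plus a subrepresentation on the ``junk'' $K_i\ominus V_i(H_i)$ that is weakly contained in $\pi_{\rho_i}$ (because $\pi_{\rho_i}$ is faithful, hence norming). The content of the theorem is that this junk does not enlarge the norm of the free product representation --- equivalently, that the reduced free product C*-algebra and its free product state are unchanged if each GNS module $H_i$ is replaced by a larger faithful representation of $A_i$ carrying a unit vector that still implements $\rho_i$. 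This is proved by decomposing $K\ominus V(H)$ as a $(\mu\circ\pi_0)$-submodule into tensor pieces indexed by alternating words with a distinguished nonempty set of junk positions, and showing by a Fell-absorption argument on the free Fock space that each such piece carries a representation weakly contained in $\lambda$. This last point is the main obstacle; everything before it is routine. (If the $\tau_i$ happen to be faithful rather than merely faithful-GNS, one can bypass it: the free product state $\tau$ is then faithful on $B$, hence on $D$, so the kernel of $\Psi$ --- an ideal of $D$ contained in the left kernel of $\tau|_D$ --- is trivial.)
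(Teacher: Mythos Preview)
The paper does not prove this theorem at all: it is stated and attributed to Blanchard and Dykema \cite[Theorem 3.1]{blanchard-dykema}, and then used as a black box. So there is no ``paper's own proof'' to compare against; what you have written is essentially a sketch of the original Blanchard--Dykema argument.

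Your outline is correct in structure and the routine parts (construction of the isometries $V_i$, $V$, the intertwining $V\lambda_i(a)=\mu_i(\theta_i(a))V$, the easy inequality $\|x\|_A\le\|\pi_0(x)\|_B$, and the conditional expectation $\Psi$) are fine. You are also right that the entire difficulty sits in the reverse inequality, i.e., in showing that enlarging each GNS module $H_i$ to a bigger faithful module $K_i$ with the same state vector does not increase the reduced free product norm. However, your treatment of this step is only a gesture: ``decompose $K\ominus V(H)$ into tensor pieces indexed by words with junk positions and apply a Fell-absorption argument'' names the strategy but does not execute it. A complete proof has to write down the decomposition explicitly, identify each summand as (a tensor product involving) a representation of $A$ induced from a subrepresentation of some $\pi_{\rho_i}$, and then invoke the fact that any representation of the reduced free product built from representations weakly contained in the $\pi_{\rho_i}$ is weakly contained in $\lambda$; this last fact is itself nontrivial and is where the real work in \cite{blanchard-dykema} lies. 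Your parenthetical shortcut for the case of genuinely faithful $\tau_i$ (via faithfulness of the free product state on $B$, hence on $D$) is correct and is a nice observation, but it does not cover the stated hypothesis of faithful GNS representations.
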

We denote the embedding $\pi$   by $\freeprod_i \theta_i$.


We will make use of a theorem first obtained by  Skoufranis (\cite[Theorem 3.1]{skoufranis}), and in a more general form by  Pisier (\cite[Corollary 4.3]{pisier}), amounting to the fact that in the context of reduced free products, the  analogue of the property of exactness (from the theory of  tensor products of C*-algebras) always holds, i.e., free exactness comes for free. 

Let us  first recall the notion of strong convergence of indexed families of elements. 
 
 Let $\{a_j^{(n)} : j \in J\}$ and $\{a_j : j \in J\}$ be countably indexed families of elements in C*-probability spaces $(A_n, \rho_n)$ and $(A, \rho)$, respectively, where $n=1,2,\ldots$. Let $\U$ be a nonprincipal ultrafilter on $\N$. 
Let us say that $\{a_j^{(n)} : j \in J\}$ converges strongly to $\{a_j : j \in J\}$ along $\U$, denoted
\begin{equation}\label{strongconv}
\begin{tikzcd}
\{a_j^{(n)} : j \in J\}\ar{r}{s}[swap]{\U}& \{a_j : j \in J\}, 	
\end{tikzcd}		
\end{equation}
if for any non-commutative polynomial $p$ in the variables $\{x_j, x_j^* : j \in J\}$, we have:
\begin{enumerate}
    \item $\lim_\U \rho^{(n)}(p(a_j^{(n)}))  = \rho(p(a_j))$,
    \item $\lim_\U \|p(a_j^{(n)})\|  =  \|p(a_j)\|$.
\end{enumerate}
Note that the first condition, and faithfulness of the GNS representations, implies that
\[
\lim_\U \|p(a_j^{(n)})\| \geq \|p(a_j)\|.
\]
Thus, we can replace the second condition in the definition of strong convergence with 
\begin{enumerate}
\item[(2')]
$\lim_\U \|p(a_j^{(n)})\| \leq \|p(a_j)\|$.
\end{enumerate}

Assume that $A=C^*(a_j:j\in J)$. Then from the strong convergence \eqref{strongconv} along $\U$ we readily obtain an embedding of
 C*-probability spaces
\[
\pi\colon (A, \rho) \to \prod_\U(A_n,\rho_n):=\Big(\prod_\U A_n, \rho_\U\Big),
\]
defined by
\[
a_j \mapsto [(a_j^{(n)})_n]_\U,
\]
where $\rho_{\U}$ denotes the limit of $(\rho^{(n)})_n$ along $\U$.

We can now state the above mentioned  theorem by Skoufranis and Pisier. 

\begin{theorem} Let
$(A,\phi)$, $(A_n,\phi_n)$, $(B,\psi)$, and $(B_n,\psi_n)$, for $n=1,2,\ldots$,   
be separable C*-probability spaces whose states induce faithful GNS representations.
Let $\{a_i:i\in I\}$, $\{a_i^{(n)}:i\in I\}$, $\{b_j:i\in J\}$ $\{b_j^{(n)}:j\in J\}$ be countably indexed families of generators 
for the C*-algebras of each of these C*-probability spaces. Suppose that
\[
\begin{tikzcd}
	\{a_i^{(n)} : i \in I\}\ar{r}{s}[swap]{\U}& \{a_i : i \in I\} 		
\end{tikzcd} \hbox{ and }
\begin{tikzcd}
	\{b_j^{(n)} : j \in J\}\ar{r}{s}[swap]{\U}& \{b_j : j \in J\}. 	
\end{tikzcd}	
\]
Then
\[
\begin{tikzcd}
\{a_i^{(n)}, b_j^{(n)} : i\in I, \, j \in J\}\ar{r}{s}[swap]{\U}& \{a_i, b_j : i\in I, \, j \in J\}. 	
\end{tikzcd}	
\]
In this convergence the indexed families are taken in $(A_n,\phi_n)*(B_n,\psi_n)$
on the left-hand side, and in $(A,\phi)*(B,\psi)$ on the right-hand side.
\end{theorem}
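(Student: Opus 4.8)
The plan is to verify the two requirements in the definition of strong convergence --- convergence of moments (condition~(1)) and of norms (condition~(2)) --- for the combined family separately. Condition~(1) is the combinatorial, easier half. Fix a noncommutative $*$-polynomial $p$ in $\{x_i,x_i^*,y_j,y_j^*\}$; writing each generator as its state value plus its centering, $a_i^{(n)}=\phi_n(a_i^{(n)})\mathbf 1+(a_i^{(n)})^{\circ}$ and likewise for the $b_j^{(n)}$, and expanding, $p(a_i^{(n)},b_j^{(n)})$ becomes an integer combination of $\mathbf 1$ and nontrivial reduced alternating words in $\ker\phi_n$ and $\ker\psi_n$. As the free product state annihilates every such word, $(\phi_n*\psi_n)(p(a_i^{(n)},b_j^{(n)}))$ equals a fixed polynomial $P_p$ --- depending only on $p$, not on $n$ or on the C*-probability spaces --- evaluated at finitely many of the moments $\phi_n(\text{monomial in the }a_i^{(n)})$ and $\psi_n(\text{monomial in the }b_j^{(n)})$. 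By the assumed strong convergences of the $a$- and $b$-families, each such individual moment converges along $\U$ to the corresponding $\phi$- or $\psi$-moment of $\{a_i\}$ or $\{b_j\}$, and continuity of $P_p$ gives $\lim_\U(\phi_n*\psi_n)(p(a_i^{(n)},b_j^{(n)}))=(\phi*\psi)(p(a_i,b_j))$.

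For condition~(2), one inequality is soft: the reduced free product state $\phi*\psi$ has faithful GNS representation (this is intrinsic to the reduced free product construction, its GNS vector being cyclic for the reduced free product algebra on $H_\phi*H_\psi$), so, since condition~(1) together with faithfulness of the GNS representations forces the ``$\ge$'' inequality (as noted before the theorem), applied to the combined family this yields $\lim_\U\|p(a_i^{(n)},b_j^{(n)})\|\ge\|p(a_i,b_j)\|$. What remains --- the reverse inequality $\lim_\U\|p(a_i^{(n)},b_j^{(n)})\|\le\|p(a_i,b_j)\|$ --- is the ``free exactness'' content of the theorem and the main obstacle. I would reformulate it first. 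Each $A_n$ and $B_n$ embeds isometrically and state-preservingly into $A_n*B_n$, so the elements $\hat a_i:=[(a_i^{(n)})_n]_\U$, $\hat b_j:=[(b_j^{(n)})_n]_\U$ lie in $(F,\nu):=\prod_\U(A_n*B_n,\phi_n*\psi_n)$ with $\lim_\U\|p(a_i^{(n)},b_j^{(n)})\|=\|p(\hat a_i,\hat b_j)\|_F$; a centering argument at each $n$, together with freeness in $A_n*B_n$ and condition~(1), shows that $C^*(\hat a_i:i)$ and $C^*(\hat b_j:j)$ are freely independent in $(F,\nu)$ with $\nu$-restrictions isomorphic to $(A,\phi)$ and $(B,\psi)$. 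Consequently $C^*(\hat a_i,\hat b_j)$ carries a canonical $\nu$-preserving surjection onto $(A,\phi)*(B,\psi)$, and the desired inequality is exactly the assertion that this surjection is an isomorphism, equivalently that $\nu$ is faithful on $C^*(\hat a_i,\hat b_j)$.

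What makes this hold --- in sharp contrast with tensor products, where exactness is needed --- is that the reduced free product norm of $p(a_i,b_j)$ is computed on the \emph{single} Hilbert space $H=\C\xi\oplus\bigoplus_{m\ge1}\bigoplus_{\varepsilon_1\ne\cdots\ne\varepsilon_m}H^{\circ}_{\varepsilon_1}\otimes\cdots\otimes H^{\circ}_{\varepsilon_m}$ (here $H_\phi=\C\xi_\phi\oplus H^{\circ}_\phi$, $H_\psi=\C\xi_\psi\oplus H^{\circ}_\psi$) via a free-Fock representation $\lambda$ assembled functorially from the two GNS triples $(H_\phi,\pi_\phi,\xi_\phi)$ and $(H_\psi,\pi_\psi,\xi_\psi)$ alone --- one representation of each factor, not its entire representation theory. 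I would make this quantitative layer by layer: on boundedly many layers all the relevant data --- the Gram matrix of finitely many spanning vectors of the form $\pi_\phi(r(a_i))\xi_\phi$ and $\pi_\psi(r'(b_j))\xi_\psi$, and the matrix coefficients of $\lambda(a_i),\lambda(b_j)$ against them --- are universal polynomial expressions in finitely many $\phi$- and $\psi$-moments of the generators, since after the standard identification $H\cong H_\phi\otimes K_A$ (with $K_A$ the closed span of $\xi$ and the reduced words not beginning in the $A$-factor) the operator $\lambda(a_i)$ is left multiplication by $\pi_\phi(a_i)$ on the first tensor factor, whose matrix entries against such vectors are moments $\phi(r'(a_i)^*a_i\,r(a_i))$. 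Strong convergence of the two families then forces these finite data packages for the $n$-th free-Fock space to converge, along $\U$, to those for $H$. The genuinely delicate step --- which I expect to be the real obstacle, and which is the technical heart of the arguments of Skoufranis and of Pisier --- is to upgrade this layerwise convergence of finite data to the operator-norm bound $\|p(\lambda_n(a^{(n)}),\lambda_n(b^{(n)}))\|_{B(H_n)}\le\|p(\lambda(a),\lambda(b))\|_{B(H)}+o(1)$ along $\U$: one must control, uniformly in $n$, the contribution of the unboundedly many higher layers, i.e.\ the free-shift part of the representations. It is here that separability and the full strength of strong convergence (not just condition~(1)) are used; letting the error tend to $0$ then completes the proof.
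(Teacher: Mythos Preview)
The paper does not prove this statement from scratch: its entire proof is a citation to Pisier's \cite[Corollary 4.3]{pisier}, together with the observation that Pisier's underlying \cite[Theorem 4.1]{pisier} is in fact established first for convergence along an ultrafilter, so the ultrafilter version needed here is already contained in Pisier's argument.

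Your sketch is aimed at reproducing what Skoufranis and Pisier actually do, and the outline is sound. Your argument for condition~(1) is correct in substance (one small slip: after centering, the expansion is not an \emph{integer} combination but a combination whose scalar coefficients are polynomials in the first moments $\phi_n(a_i^{(n)})$, $\psi_n(b_j^{(n)})$; this is harmless for the conclusion). Your reformulation of the remaining inequality as faithfulness (more precisely, faithful GNS) of the ultraproduct state on $C^*(\hat a_i,\hat b_j)$ is also correct, and your description of the Fock-space mechanism that ultimately makes this work is accurate.

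However, you have not actually closed the argument: the ``genuinely delicate step'' you single out---upgrading layerwise convergence of finite Gram and matrix-coefficient data to a uniform operator-norm bound by controlling the contribution of the infinitely many higher Fock layers---is exactly the substance of the theorem, and you explicitly leave it as an acknowledged obstacle rather than carrying it out. So what you have written is a correct roadmap plus a completed proof of the easy half, not a proof. This is, to be fair, more detail than the paper itself supplies (the paper just cites), but if the intent is a self-contained argument you would still need to supply the tail estimate, e.g.\ along the lines of Pisier's operator-space/Haagerup-inequality approach or Skoufranis's ideal-theoretic argument.
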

	 
\begin{proof}
	This is \cite[Corollary 4.3]{pisier}, except that there the strong convergence is taken in the ordinary sense (i.e., relative to the Fr\'echet filter), rather than with respect to an ultrafilter. The proof of \cite[Corollary 4.3]{pisier}, however, appeals directly to \cite[Theorem 4.1]{pisier}, which is in turn proven first for strong convergence along ultrafilters, and then for ordinary convergence. From
	the ultrafilter version of \cite[Theorem 4.1]{pisier} we readily obtain the ultrafilter version of \cite[Corollary 4.3]{pisier}, i.e., the theorem that we have recalled here.
\end{proof}


\begin{theorem}\label{freeexactness}
	Let $(B_1,\tau_1)$ and $(B_2, \tau_2)$  be separable C*-probability spaces with faithful GNS representations. Suppose that, for an ultrafilter $\mathcal{U}$ on $\mathbb{N}$ and $i=1,2$, we have embeddings 
	\[
	(B_i, \tau_i) \hookrightarrow \prod_\U  (A_i^{(n)}, \rho_i^{(n)}),
	\]
where $(A_i^{(n)},\rho_i^{(n)})$, for $n=1,2,\ldots$, are  C*-probability spaces with faithful GNS representations.
	Then, there exists an embedding
	\[
	\pi\colon (B_1, \tau_1) \ast (B_2, \tau_2) \hookrightarrow \prod_\U  (A_1^{(n)}*A_2^{(n)}, \rho_1^{(n)}*\rho_2^{(n)})
	\]
	such that $\pi|_{B_i}$ agrees with the embedding of $B_i$ into $\prod_\U A_i^{(n)}$ for $i = 1,2$.
	In other words, the following diagram commutes for $i = 1,2$:
	\[
	\begin{tikzcd}
		\prod_\U  A_i^{(n)} \ar[r, hook] & \prod_\U  (A_1^{(n)}*A_2^{(n)}) \\
		B_i \ar[u, hook] \ar[r, hook] & B_1 \ast B_2 \ar[u, hook, "\pi"]
	\end{tikzcd}
	\]
\end{theorem}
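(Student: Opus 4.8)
The plan is to reduce Theorem \ref{freeexactness} to the Skoufranis--Pisier strong convergence theorem by first replacing the given ultrapower embeddings with genuine strongly convergent sequences, then applying that theorem to the free products, and finally assembling the resulting strong convergence into the desired embedding of the free product into the ultraproduct of free products.

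First I would fix countable generating sets. Write $B_i = C^*(b_j^{(i)} : j \in J_i)$ for countable index sets $J_i$ (possible since $B_i$ is separable), and similarly fix countable generators $\{a_{i,k}^{(n)} : k \in K_i\}$ for each $A_i^{(n)}$. The embedding $(B_i,\tau_i)\hookrightarrow \prod_\U (A_i^{(n)},\rho_i^{(n)})$ sends each generator $b_j^{(i)}$ to a class $[(c_{i,j}^{(n)})_n]_\U$ with $c_{i,j}^{(n)}\in A_i^{(n)}$. The fact that this map is an embedding of C*-probability spaces says precisely, via {\L}o\'s's theorem and faithfulness of the GNS representations, that $\{c_{i,j}^{(n)} : j\in J_i\}$ converges strongly to $\{b_j^{(i)} : j\in J_i\}$ along $\U$, in the sense of \eqref{strongconv}: for every non-commutative $*$-polynomial $p$, $\lim_\U \rho_i^{(n)}(p(c_{i,j}^{(n)})) = \tau_i(p(b_j^{(i)}))$ and $\lim_\U \|p(c_{i,j}^{(n)})\| = \|p(b_j^{(i)})\|$. (The norm equality follows from the norm inequality $\lim_\U\|p(c_{i,j}^{(n)})\|\ge \|p(b_j^{(i)})\|$, valid by faithfulness as noted in the excerpt, together with $\|[(c_{i,j}^{(n)})_n]_\U\| = \lim_\U\|c_{i,j}^{(n)}\|$ in the ultraproduct and the fact that the map is isometric on $B_i$.)

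Next I would invoke the Skoufranis--Pisier theorem. Since $(B_1,\tau_1)$, $(B_2,\tau_2)$, and the $(A_i^{(n)},\rho_i^{(n)})$ are all separable C*-probability spaces with faithful GNS representations, and since
\[
\begin{tikzcd}
\{c_{1,j}^{(n)} : j\in J_1\} \ar{r}{s}[swap]{\U} & \{b_j^{(1)} : j\in J_1\}
\end{tikzcd}
\quad\text{and}\quad
\begin{tikzcd}
\{c_{2,j}^{(n)} : j\in J_2\} \ar{r}{s}[swap]{\U} & \{b_j^{(2)} : j\in J_2\},
\end{tikzcd}
\]
the theorem yields
\[
\begin{tikzcd}
\{c_{1,j}^{(n)}, c_{2,j'}^{(n)} : j\in J_1,\, j'\in J_2\} \ar{r}{s}[swap]{\U} & \{b_j^{(1)}, b_{j'}^{(2)} : j\in J_1,\, j'\in J_2\},
\end{tikzcd}
\]
where the left-hand families live in $(A_1^{(n)}*A_2^{(n)}, \rho_1^{(n)}*\rho_2^{(n)})$ and the right-hand family lives in $(B_1,\tau_1)*(B_2,\tau_2)$. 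Now $B_1 * B_2 = C^*(b_j^{(1)}, b_{j'}^{(2)} : j\in J_1, j'\in J_2)$, so as explained in the paragraph preceding the Skoufranis--Pisier theorem in the excerpt, this strong convergence produces an embedding of C*-probability spaces
\[
\pi\colon (B_1,\tau_1)*(B_2,\tau_2) \hookrightarrow \prod_\U (A_1^{(n)}*A_2^{(n)}, \rho_1^{(n)}*\rho_2^{(n)}),\qquad
b_j^{(i)} \mapsto [(c_{i,j}^{(n)})_n]_\U.
\]
Commutativity of the stated diagram is immediate from this formula: on the generators $b_j^{(i)}$ of $B_i$, $\pi$ agrees with the given embedding $B_i \hookrightarrow \prod_\U A_i^{(n)}$ followed by the canonical inclusion $\prod_\U A_i^{(n)} \hookrightarrow \prod_\U (A_1^{(n)}*A_2^{(n)})$ induced by the free-product inclusions $A_i^{(n)}\hookrightarrow A_1^{(n)}*A_2^{(n)}$, and both maps are $*$-homomorphisms on all of $B_i$, hence equal.

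The only genuinely substantive ingredient is the Skoufranis--Pisier strong convergence theorem itself, which is quoted as a black box; everything else is the bookkeeping of translating between ``embedding into an ultraproduct'' and ``strong convergence along $\U$'', which is exactly the dictionary set up in the preliminaries. The one point requiring minor care is making sure that strong convergence is indeed the right reformulation of an ultraproduct embedding in \emph{both} directions — that an embedding gives strong convergence (needs faithfulness of the GNS representation of $B_i$, so that the norm on $B_i$ is recovered and condition (2) holds, not merely the inequality) and that strong convergence gives back an embedding (needs faithfulness of the GNS representations of the $A_i^{(n)}*A_i^{(n)}$, which holds since reduced free products of C*-probability spaces with faithful GNS representations again have faithful GNS representations). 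There is no circularity and no extra hypothesis needed beyond those in the statement.
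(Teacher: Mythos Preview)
Your approach is the same as the paper's: translate the ultrapower embeddings into strong convergence along $\U$, apply the Skoufranis--Pisier theorem, and translate back. However, you have glossed over one technical point that the paper's proof addresses explicitly. The Skoufranis--Pisier theorem, as quoted, requires that the indexed families $\{c_{i,j}^{(n)} : j \in J_i\}$ \emph{generate} the ambient C*-algebras $A_i^{(n)}$; your lifts of the generators of $B_i$ have no reason to do so (you introduce generators $a_{i,k}^{(n)}$ of $A_i^{(n)}$ in the first paragraph but never fold them into the strongly convergent families). One cannot simply replace $A_i^{(n)}$ by the subalgebra generated by the lifts, since faithfulness of the GNS representation need not pass to subalgebras, and that hypothesis is needed to form the reduced free products in the theorem.

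The paper's fix is to enlarge each family of lifts by adjoining a countable generating set of $\bigoplus_n A_i^{(n)}$: the projections of these added elements onto each coordinate then generate $A_i^{(n)}$, while their images in the nonprincipal ultraproduct $\prod_\U A_i^{(n)}$ vanish, so the limit family still generates exactly $B_i$ and the strong convergence is undisturbed. This trick only works for nonprincipal $\U$, which is why the paper disposes of the principal case separately at the outset (there it reduces immediately to Blanchard--Dykema). With these two adjustments your argument becomes the paper's proof.
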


\begin{proof}
If the ultrafilter $\U$ is principal, this is Blanchard and Dykema's theorem recalled above. Assume thus that $\U$ is nonprincipal.
	
Let's simply regard $B_i$ as a subalgebra of $\prod_\U A_i^{(n)}$, and $\tau_i$ as the restriction of $\rho_i^{\mathcal{U}}$ to $B_i$ for $i = 1,2$.

Choose countably many generators $\{b_{1,j} : j \in J_1'\}$ of $B_1$, then
choose lifts $(b_{1,j}^{(n)})_n$ in $ \prod_{n=1}^\infty A_1^{(n)}$ for each $b_{1,j}$. 
Using that   $\bigoplus_n  A_1^{(n)}$ is separable, extend this collection  
so that it also contains generators of $\bigoplus_n  A_1^{(n)}$. 
We thus get a countably  indexed collection of elements $\{(b_{1,j}^{(n)}):j\in J_1\}$
in $\prod_n A_{1}^{(n)}$ such that when projected  onto  $A_1^{(n)}$ generates $A_1^{(n)}$
for all $n$ and  when mapped to $\prod_\U A_{1}^{(n)}$ generates $B_1$. Choose similarly an indexed collection of elements 
$\{(b_{2,j}^{(n)})_n:j\in J_2\}$ in $\prod_n A_2^{(n)}$ whose projections onto $A_2^{(n)}$, for all $n$, and onto 
the ultraproduct $\prod_\U A_2^{(n)}$ result in generating families for $A_2^{(n)}$ and $B_2$, respectively.

We have 
\[
\begin{tikzcd}
	\{b_{1,j}^{(n)} : j \in J_1\}\ar{r}{s}[swap]{\U}& \{b_{1,j} : j \in J_1\} 		
\end{tikzcd} \hbox{ and }
\begin{tikzcd}
	\{b_{2,j}^{(n)} : j \in J_2\}\ar{r}{s}[swap]{\U}& \{b_{2,j} : j \in J_2\}. 	
\end{tikzcd}	
\]
Hence, by the previous theorem,
\[
\begin{tikzcd}
	\{b_{1,j_1}^{(n)}, b_{2,j_2}^{(n)} : j_1\in J_1, \, j_2 \in J_2\}\ar{r}{s}[swap]{\U}& \{b_{1,j_1}, b_{2,j_2} : j_1\in J_1, \, j_2 \in J_2\}. 	
\end{tikzcd}	
\]
This, in turn, yields 
\[
\pi \colon B_1 \ast B_2 \to \prod_\U (A_1^{(n)} \ast A_2^{(n)}),
\]
that maps $b_{i,j}$, for $i=1,2$, to the element in $\prod_\U (A_1^{(n)} \ast A_2^{(n)})$ with lift  $(b_{i,j}^{(n)})_n$.
This is the desired embedding.
\end{proof}

\begin{corollary}\label{freeexistential}
For $i=1,2$, let $\theta_i\colon (A_i,\rho_i)\to (B_i,\tau_i)$ be existential embeddings of C*-probability spaces, where the C*-algebras are all separable and the states have faithful GNS representations.  Then, the embedding 
\[
\theta_1*\theta_2\colon (A_1, \rho_1) \ast (A_2, \rho_2) \to (B_1, \tau_1) \ast (B_2, \tau_2)
\]
is also existential.
\end{corollary}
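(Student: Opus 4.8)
The plan is to deduce Corollary \ref{freeexistential} from Theorem \ref{freeexactness} together with the ultrapower characterization of existential embeddings given after \eqref{sigmatheta}. Since $\theta_i\colon (A_i,\rho_i)\to (B_i,\tau_i)$ is existential, for each $i=1,2$ there is an ultrafilter $\U_i$ and an embedding $\sigma_i\colon (B_i,\tau_i)\to (A_i,\rho_i)^{\U_i}$ with $\sigma_i\theta_i$ equal to the diagonal embedding. The first technical point is that we may take a common index set and a common ultrafilter: replacing $\U_1,\U_2$ by an ultrafilter $\U$ on $\N$ is legitimate because, by a standard argument (the relevant $\forall\exists$-type condition defining an existential embedding is witnessed by countably many formulas applied to countably many tuples, since the $A_i$ are separable), an existential embedding of separable C*-probability spaces admits a factorization through a \emph{countable} ultrapower, i.e., one indexed by a nonprincipal ultrafilter $\U$ on $\N$. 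So without loss of generality $\sigma_i\colon (B_i,\tau_i)\hookrightarrow \prod_\U (A_i,\rho_i)$, the constant sequence ultrapower, and in the notation of Theorem \ref{freeexactness} we have $A_i^{(n)}=A_i$ and $\rho_i^{(n)}=\rho_i$ for all $n$.

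Next I would apply Theorem \ref{freeexactness} with these data. It produces an embedding
\[
\pi\colon (B_1,\tau_1)\ast(B_2,\tau_2)\hookrightarrow \prod_\U\big(A_1\ast A_2,\ \rho_1\ast\rho_2\big)=(A_1\ast A_2,\ \rho_1\ast\rho_2)^{\U},
\]
whose restriction to $B_i$ agrees with $\sigma_i$ followed by the canonical inclusion $\prod_\U A_i \hookrightarrow \prod_\U (A_1\ast A_2)$; this latter inclusion is exactly the ultrapower of the structural inclusion $A_i\hookrightarrow A_1\ast A_2$. Now I need to identify what $\pi\circ(\theta_1\ast\theta_2)$ does. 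Restricted to $A_i$ (viewed inside $A_1\ast A_2$), we have $\pi\circ(\theta_1\ast\theta_2)|_{A_i}=\pi|_{B_i}\circ\theta_i=(\iota_{A_i}^{\U}\circ\sigma_i)\circ\theta_i=\iota_{A_i}^{\U}\circ(\sigma_i\theta_i)$, which is the ultrapower inclusion of $A_i$ composed with the diagonal embedding of $A_i$ into its own ultrapower — in other words, it is precisely the diagonal embedding of $A_i$ into $(A_1\ast A_2)^{\U}$. Since $A_1\ast A_2$ is generated by the copies of $A_1$ and $A_2$, and $\pi\circ(\theta_1\ast\theta_2)$ and the diagonal embedding $\iota\colon A_1\ast A_2\to (A_1\ast A_2)^{\U}$ are both $*$-homomorphisms agreeing on these generators, they coincide. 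Thus $\pi$ witnesses, via diagram \eqref{sigmatheta}, that $\theta_1\ast\theta_2$ is existential.

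The main obstacle I anticipate is the reduction to a common countable ultrafilter, and the care needed in matching up the various inclusions so that the triangle \eqref{sigmatheta} genuinely commutes. For the first issue, one wants a clean statement that for separable structures in a countable language, existentiality can be witnessed by an embedding into an $\N$-indexed ultrapower; this is routine model theory (each of the countably many instances of \eqref{pos-exist} is an approximate statement that, after choosing $\epsilon_k\to 0$ and a sequence of approximate witnesses, can be packaged as a single sequence and passed to the quotient by a nonprincipal $\U$ on $\N$), but it should be invoked carefully, perhaps with a pointer to the discussion after \eqref{sigmatheta} or to \cite{goldbring-sinclair}. For the second issue, the key is simply to track that the inclusion $\prod_\U A_i\hookrightarrow\prod_\U(A_1\ast A_2)$ supplied by Theorem \ref{freeexactness} is the one induced entrywise by $A_i\hookrightarrow A_1\ast A_2$ — which is exactly what the commuting square in the statement of Theorem \ref{freeexactness} asserts — so that composing with the diagonal of $A_i$ yields the diagonal of $A_i$ inside $(A_1\ast A_2)^\U$. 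Everything else is a diagram chase on generators.
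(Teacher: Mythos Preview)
Your proposal is correct and follows essentially the same route as the paper: apply Theorem \ref{freeexactness} to the embeddings $\sigma_i\colon B_i\hookrightarrow A_i^{\U}$ to get $\pi\colon B_1*B_2\hookrightarrow (A_1*A_2)^{\U}$, then verify on the generators $A_1,A_2$ that $\pi\circ(\theta_1*\theta_2)$ is the diagonal embedding. You are in fact more careful than the paper on one point: the paper simply writes ``choose an ultrafilter $\U$ on $\N$ and embeddings $\sigma_i$'' without justifying that a \emph{single} $\U$ works for both $i$, whereas you correctly flag this and invoke separability to reduce to a common countable ultrapower.
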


\begin{proof}
Choose an ultrafilter $\U$ on $\N$ and  embeddings  $\sigma_i\colon (B_i,\tau_i)\to (A_i^{\U},\rho_i^{\U})$  for $i=1,2$
such that $\sigma_i\theta_i$ agrees with the diagonal embedding of $A_i$ in $A_i^{\U}$.
By the previous theorem, we obtain an embedding
\[
\pi\colon (B_1,\tau_1)*(B_2, \tau_2)\to ((A_1*A_2)^{\U}, (\rho_1*\rho_2)^{\U}).
\]
Consider the  composition $\phi= \pi\circ(\theta_1*\theta_2)$ of the embedding $\theta_1*\theta_2\colon A_1*A_2 \to B_1*B_2$  with $\pi$.
We readily verify that the restrictions of $\phi$
to $A_1$ and $A_2$ (regarded as subalgebras of $A_1*A_2$) coincide with the diagonal embeddings
of these algebras in $(A_1*A_2)^{\U}$. Thus, $\phi$ agrees with the 
diagonal embedding of $A_1*A_2$ in $(A_1*A_2)^{\U}$, which shows that  $\theta_1*\theta_2$ is existential.
\end{proof}

\section{Definition and first examples} \label{definition}

\begin{definition}\label{selflessdef}
Let $(A,\rho)$ be a C*-probability space,  where $\rho$ has faithful GNS representation.  We call $(A,\rho)$ selfless if $A\neq \C$ and 
the  first factor embedding $(A,\rho)\hookrightarrow (A,\rho)*(A,\rho)$  is existential.
\end{definition}

%
%
%

The following proposition provides some concrete examples.  
\begin{proposition}
Let $(A,\rho)$ be a C*-probability space, where $\rho$ has faithful GNS representation. Then $\freeprod_{i=1}^\infty (A,\rho)$ is selfless.
\end{proposition}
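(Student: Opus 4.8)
The plan is to show that the first-factor embedding $\iota\colon (A,\rho)\hookrightarrow (B,\tau)$, where $(B,\tau):=\freeprod_{i=1}^\infty (A,\rho)$, is existential by exhibiting an embedding of $(B,\tau)$ into an ultrapower of $(B,\tau)$ that splits $\iota$. The key structural observation is the ``shift'' self-similarity of the infinite free product: $(B,\tau)\cong (A,\rho)*(B,\tau)$, and more generally $(B,\tau)$ is the reduced free product of countably many copies of itself. This is what should let $(B,\tau)$ be free from itself inside its own ultrapower.

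First I would fix a nonprincipal ultrafilter $\U$ on $\N$ and, for each $n$, let $B_n\subseteq B$ be the copy of $\freeprod_{i=1}^\infty(A,\rho)$ generated by the factors indexed $i\ge n+1$ (so $B_0=B$), together with the copy $C_n$ generated by the factors indexed $1,\dots,n$; thus $(B,\tau)=(C_n,\tau|_{C_n})*(B_n,\tau|_{B_n})$ with each $(B_n,\tau|_{B_n})$ isomorphic to $(B,\tau)$ via the shift, and each $(C_n,\tau|_{C_n})$ isomorphic to $\freeprod_{i=1}^n(A,\rho)$. I would then define $\sigma\colon (B,\tau)\to (B^\U,\tau^\U)$ on the $n$-th iterate by sending the copy of $B$ to the class $[(\psi_n(b))_n]_\U$, where $\psi_n\colon B\xrightarrow{\ \cong\ } B_n\hookrightarrow B$ is the shift embedding; the point is that as $n\to\U$ the first $k$ free factors of $B$ get pushed off to infinity, so in the ultrapower the image of $B$ under $\sigma$ becomes freely independent from the diagonal copy of $B$ (equivalently, from $\iota(A)$), and $\sigma\circ\iota$ agrees with the diagonal embedding on $\iota(A)$ — which is exactly the splitting diagram \eqref{sigmatheta} needed.

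Alternatively, and perhaps more cleanly, I would build $\sigma$ via strong convergence and Theorem~\ref{freeexactness} (free exactness): the families of generators of the ``tail'' copies $B_n$ strongly converge along $\U$ to a free copy of $B$, and the generators of $C_n$ are eventually (from the $\U$-viewpoint) a fixed diagonal copy, so their union strongly converges to the free product of these two, i.e.\ to $(A,\rho)*(B,\tau)\cong(B,\tau)$; this yields the embedding $\sigma\colon (A,\rho)*(B,\tau)\to\prod_\U(B,\tau)=(B,\tau)^\U$ restricting to the diagonal on the first factor. Composing with $\iota\colon(A,\rho)\hookrightarrow(A,\rho)*(B,\tau)\cong(B,\tau)$ gives the diagonal embedding of $(A,\rho)$, establishing that $\iota$ is existential. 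One must also check $A\neq\C$, which holds since $A\neq\C$ by hypothesis (if $A=\C$ then $\freeprod(A,\rho)=\C$ and the statement is vacuous/excluded).

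The main obstacle I expect is making precise the ``freeness in the limit'' claim: that shifting the free factors off to infinity along $\U$ really produces a copy of $(B,\tau)$ freely independent from the diagonal copy. This is where free exactness (Theorem~\ref{freeexactness}) does the real work — it lets one assemble the two strongly convergent generating families (the fixed diagonal one and the shifted tail one) into a strongly convergent family for the free product, guaranteeing both the correct joint moments (freeness, via condition~(1)) and the correct norms (condition~(2)). Verifying the strong convergence of the shifted tail family to a free copy of $B$ amounts to noting that for any fixed noncommutative $*$-polynomial, once $n$ is large enough the variables involved land in factors disjoint from any fixed finite set, so the moments and norms stabilize to those of $(B,\tau)$; this is routine once set up, but the bookkeeping with the index sets $J$ and the separability reductions required by Theorem~\ref{freeexactness} needs care.
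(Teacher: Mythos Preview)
Your proposal targets the wrong embedding. To show that $(B,\tau):=\freeprod_{i=1}^\infty(A,\rho)$ is selfless you must show that the first-factor embedding $\theta\colon (B,\tau)\hookrightarrow (B,\tau)*(B,\tau)$ is existential. Instead you set out to show that $\iota\colon(A,\rho)\hookrightarrow(B,\tau)$ is existential. These are different statements: existentiality of $\iota$ would require an embedding of $(B,\tau)$ into an ultrapower of $(A,\rho)$, not of $(B,\tau)$, and even if established it would (via Theorem~\ref{selflessunitary}(vi), which in any case comes later in the paper) say that $(A,\rho)$ is selfless---which it need not be. Your constructions all land in $B^\U$, so they cannot witness existentiality of $\iota$ anyway; and your claim that ``$\sigma\circ\iota$ agrees with the diagonal embedding on $\iota(A)$'' is false, since the shift $\psi_n$ moves the first factor of $A$ to the $(n{+}1)$-st.

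The shift idea is salvageable once redirected at $\theta$, but the paper's argument is considerably simpler than anything you propose. Since $B*B$ and $B$ are both countably infinite reduced free products of copies of $(A,\rho)$, they are isomorphic; moreover, for each $n$ one can choose a reshuffling isomorphism $\sigma_n\colon B*B\to B$ that restricts to the identity on the subalgebra $B_n:=\freeprod_{i=1}^n A$ of the first copy of $B$. Then either Lemma~\ref{unusedsymbol} (applied to the upward directed families $(B_n)_n$ and $(B_n*B)_n$) or the embedding $\sigma\colon B*B\to B^\U$ assembled from $(\sigma_n)_n$ shows that $\theta$ is existential. No appeal to free exactness, strong convergence, or separability reductions is needed: the $\sigma_n$ land in $B$ itself, not merely in an ultraproduct.
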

\begin{proof}
Let $(B,\tau)=\freeprod_{i=1}^\infty (A,\rho)$ and   $B_n= \freeprod_{i=1}^n A$ for $n=1,2,\ldots$,
which we regard as subalgebras of $B$. 
Call $\theta$ the first factor embedding of $(B,\tau)$ into $(B,\tau)*(B,\tau)$. Notice that  $(B,\tau)\cong (B,\tau)*(B,\tau)$, and that, by re-shuffling the factors of the reduced free products, we can  get isomorphisms  
$\sigma_n\colon B*B\to B$ such that $\sigma_n\theta=\mathrm{id}|_{B_n}$ for all $n$.
It follows by Lemma \ref{unusedsymbol} that $\theta$ is existential, i.e., $(B,\rho)$ is selfless.  (More directly, 
the sequence $(\sigma_n)_n$ gives an embedding $\sigma\colon B*B\to B^{\U}$, with $\U$ a nonprincipal ultrafilter on $\N$, such that  $\sigma\theta$ is the diagonal embedding of $B$ in $B^{\U}$.) 
\end{proof}

Let $\F_\infty$ denote the free group in infinitely many generators. Let $\mathcal O_\infty$ denote
the Cuntz algebra generated by infinitely many isometries. 
The C*-algebras $C_r^*(\mathbb F_\infty)$ and $\mathcal O_\infty$ fit the template of the previous proposition:
\begin{align}\label{finftyoinfty}
(C_r^*(F_\infty),\tau) &\cong *_{n=1}^\infty (C_r^*(\Z),\lambda),\\
(\mathcal O_\infty, \phi) &\cong  *_{n=1}^\infty (C_r^*(\N),\delta_0),
\end{align}
where $\delta_0$ is the state on $C_r^*(\N)$ such that  $\delta_0((s_1)^n)=0$ for all $n\geq 1$. (See Avitzour, \cite[\S 2.5]{voiculescu}).
We thus obtain the following:
\begin{corollary}\label{corofinftyoinfty}
$(C_r^*(\mathbb F_{\infty}),\tau)$ and $(\mathcal O_\infty,\phi)$ are selfless.
\end{corollary}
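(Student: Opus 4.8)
The plan is to deduce Corollary \ref{corofinftyoinfty} directly from the preceding proposition together with the isomorphisms recorded in \eqref{finftyoinfty}. The proposition asserts that the infinite reduced free product $\freeprod_{i=1}^\infty (A,\rho)$ is selfless whenever $\rho$ has faithful GNS representation. So the only work is to exhibit $(C_r^*(\mathbb F_\infty),\tau)$ and $(\mathcal O_\infty,\phi)$ as such infinite reduced free products of a single C*-probability space with faithful GNS representation.

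First I would recall that $(C_r^*(\mathbb F_\infty),\tau) \cong \freeprod_{n=1}^\infty (C_r^*(\Z),\lambda)$, where $\lambda$ is the canonical tracial state on $C_r^*(\Z)$; since $\lambda$ is faithful (it is a faithful trace on $C_r^*(\Z) \cong C(\mathbb T)$, being integration against Haar measure), its GNS representation is faithful. Thus $(C_r^*(\mathbb F_\infty),\tau)$ is an instance of the preceding proposition with $(A,\rho) = (C_r^*(\Z),\lambda)$, and is therefore selfless. Likewise, $(\mathcal O_\infty,\phi) \cong \freeprod_{n=1}^\infty (C_r^*(\N),\delta_0)$, where $\delta_0$ is the state on $C_r^*(\N) = C^*(s_1)$ with $s_1$ the unilateral shift such that $\delta_0((s_1)^n) = 0$ for $n \geq 1$; this state has faithful GNS representation (the GNS space is the Fock space on which $s_1$ acts as the shift, which is a faithful representation). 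Hence $(\mathcal O_\infty,\phi)$ is again covered by the proposition and is selfless.

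Concretely, I would write: by \eqref{finftyoinfty}, both $(C_r^*(\mathbb F_\infty),\tau)$ and $(\mathcal O_\infty,\phi)$ are of the form $\freeprod_{i=1}^\infty (A,\rho)$ for a C*-probability space $(A,\rho)$ whose state has faithful GNS representation; apply the preceding proposition. The main (indeed only) point requiring a word of justification is that the states $\lambda$ and $\delta_0$ have faithful GNS representations, which is why this appears in \eqref{finftyoinfty} as standing assumptions inherited from the Avitzour--Voiculescu description, and which I would verify as above by identifying the GNS representations explicitly. No step here is a genuine obstacle; the substance was already carried out in the proof of the proposition via the reshuffling-of-factors argument and Lemma \ref{unusedsymbol}.
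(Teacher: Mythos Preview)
Your proposal is correct and follows exactly the paper's approach: the corollary is stated immediately after the proposition and the isomorphisms \eqref{finftyoinfty}, with no separate proof, since it is a direct application. Your added verification that $\lambda$ and $\delta_0$ have faithful GNS representations is a reasonable elaboration of what the paper leaves implicit.
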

Since the state $\phi$ is pure, and by the homogeneity of the set of pure states in a simple separable C*-algebra, 
we get that $(\mathcal O_\infty,\psi)$ is selfless for any pure state $\psi$. (Cf. Theorem \ref{selflesskirchberg} below.)

\begin{lemma}\label{subselfless}
If $(B,\tau)$ is selfless and $\theta\colon (A,\rho)\hookrightarrow (B,\tau)$ is an existential embedding, then $(A,\rho)$
is selfless.
\end{lemma}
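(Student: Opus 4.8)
The plan is to use the ultrapower characterization of existential embeddings together with the free-exactness machinery. Since $\theta\colon (A,\rho)\hookrightarrow (B,\tau)$ is existential, we have an ultrafilter $\U$ and an embedding $\sigma\colon (B,\tau)\hookrightarrow (A^{\U},\rho^{\U})$ with $\sigma\theta$ equal to the diagonal embedding of $(A,\rho)$. Since $(B,\tau)$ is selfless, the first-factor embedding $\iota_B\colon (B,\tau)\hookrightarrow (B,\tau)*(B,\tau)$ is existential; applying Corollary \ref{freeexistential} (or the general fact that free products of existential embeddings are existential) to $\theta*\theta$, the embedding $\theta*\theta\colon (A,\rho)*(A,\rho)\hookrightarrow (B,\tau)*(B,\tau)$ is also existential — but wait, I should be careful about separability hypotheses in that corollary, so I would instead argue directly.

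First I would reduce to showing that the first-factor embedding $\iota_A\colon (A,\rho)\hookrightarrow (A,\rho)*(A,\rho)$ splits through an ultrapower of $(A,\rho)$ compatibly with the diagonal. The composite $\iota_B\circ\theta\colon (A,\rho)\hookrightarrow (B,\tau)*(B,\tau)$ factors through $\theta*\theta$ via $\theta$ on the first factor, and since $\iota_B$ is existential and $\theta$ is existential, Lemma \ref{lemexistential}(ii) shows $\iota_B\circ\theta$ is existential. Now I want to transfer this to $\iota_A$. The key point is a naturality statement: the square with $\iota_A$, $\iota_B$, $\theta$, and $\theta*\theta$ commutes, so we have an existential embedding $\iota_B\circ\theta = (\theta*\theta)\circ\iota_A$. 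By Lemma \ref{lemexistential}(i), since $(\theta*\theta)\circ\iota_A$ is existential, $\iota_A$ is existential — provided I can show $\theta*\theta$ is an embedding, which it is by Theorem \ref{thm-blanchard-dykema}. This is exactly the statement that $(A,\rho)$ is selfless (together with $A\neq\C$, which holds since $A$ embeds in $B$ and, more to the point, since $A\neq\C$ is needed — but if $A=\C$ then the trivial embedding into $B$ would not be existential unless $B=\C$, contradicting selflessness of $B$; so $A\neq\C$).

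Actually the cleanest route avoids Corollary \ref{freeexistential} entirely: I only need Lemma \ref{lemexistential}, applied to the factorization $\iota_B\circ\theta=(\theta*\theta)\circ\iota_A$. So the steps are: (1) note $\theta*\theta$ is a well-defined embedding by Theorem \ref{thm-blanchard-dykema}; (2) observe the commuting square $(\theta*\theta)\circ\iota_A=\iota_B\circ\theta$, which is immediate from how $\freeprod$ acts on morphisms and the fact that first-factor embeddings are natural; (3) since $\iota_B$ is existential (selflessness of $(B,\tau)$) and $\theta$ is existential (hypothesis), Lemma \ref{lemexistential}(ii) gives that $\iota_B\circ\theta$ is existential; (4) since $(\theta*\theta)\circ\iota_A=\iota_B\circ\theta$ is existential, Lemma \ref{lemexistential}(i) gives that $\iota_A$ is existential; (5) finally $A\neq\C$ because otherwise $(\C,\mathrm{id})\hookrightarrow(B,\tau)$ being existential would force, via the split through $\C^{\U}=\C$, that $B=\C$, contradicting $B\neq\C$. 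Hence $(A,\rho)$ is selfless.

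I expect the only genuine subtlety to be verifying the commutativity $(\theta*\theta)\circ\iota_A=\iota_B\circ\theta$, i.e., that the first-factor inclusions intertwine $\theta$ and $\theta*\theta$; but this is exactly the content of the commuting diagram in Theorem \ref{thm-blanchard-dixema}'s statement (with $I=\{1,2\}$ and $\theta_1=\theta_2=\theta$), restricted to the first factor. Everything else is a bookkeeping application of the two parts of Lemma \ref{lemexistential}. There is no real obstacle here; this is a short functoriality argument.
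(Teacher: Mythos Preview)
Your approach is essentially the paper's: form the commuting square with $\iota_A$, $\iota_B$, $\theta$, and $\theta*\theta$, observe that $\iota_B\circ\theta$ is existential as a composite of existentials (Lemma~\ref{lemexistential}(ii)), and deduce that $\iota_A$ is existential via Lemma~\ref{lemexistential}(i). Your argument for $A\neq\C$ is also the same in spirit.

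The one genuine omission is that you do not verify that $\rho$ has faithful GNS representation. This is part of Definition~\ref{selflessdef}, and without it the reduced free product $(A,\rho)*(A,\rho)$ is not even defined in the paper's framework. The paper handles this explicitly: given a nonzero positive $a\in A$, faithfulness of the GNS representation of $\tau$ produces $y\in B$ with $\tau(y\theta(a)y^*)>\delta>0$, and existentiality of $\theta$ (applied to the formula $\inf_y\bigl(\delta\dot{-}\rho(yay^*)\bigr)$, say) pulls this back to some $x\in A$ with $\rho(xax^*)>\delta$. You should add this step. (Minor: you have a typo \texttt{thm-blanchard-dixema} in your last paragraph.)
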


\begin{proof}
Note that $A\neq \C$, since 
		$B$ embeds in an ultrapower of $A$ and $B\neq \C$.
		Let us show next  that $\rho$ has faithful GNS representation. Let $a\in A$ be positive and nonzero. Since $\tau$ has faithful GNS representation,
		there exist $\delta>0$ and $y\in B$, in the unit ball, such that $\tau(y\theta(a)y^*)>\delta$. Since $\theta$
		is existential, we find $x\in A$ such that $\rho(xax^*)>\delta$. Thus, $\rho$ has faithful GNS representation.

	Let $\psi\colon B \to B*B$ be the first factor embedding, which is  existential.
	The composition   $\psi \circ \theta$ is thus  existential, by Lemma \ref{lemexistential}. We have the commutative diagram 
	\[
	\begin{tikzcd}
		(B,\tau) \ar[r, hook] & (B,\tau)*(B,\tau) \\
		(A, \rho) \ar[r, hook]\ar[u, hook, "\theta"] & (A, \rho)*(A, \rho)\ar[u, hook, "\theta*\theta"]
	\end{tikzcd},
	\]
	where the horizontal arrows are the first-factor embeddings. Since the first factor embedding   $A\hookrightarrow A*A$ composed with $\theta*\theta$ yields $\psi\theta$, which  we have shown is existential, we get that $A\hookrightarrow A*A$ is existential (Lemma \ref{lemexistential}), as desired.
\end{proof}

\emph{Note}: When we speak below of freely independent elements $a_1,\ldots,a_n$ in $(A,\rho)$, we understand by it free independence of the C*-subalgebras that they each generate.

Recall that a unitary $u\in A$ is called a Haar unitary, relative to a state $\rho$, if $\rho(u^n)=0$ for all $n\neq 0$; equivalently, if the restriction of $\rho$ to $C^*(u)$ is represented by the normalized Lebesgue measure on $\T$.

\begin{lemma}\label{superconvergence}
Let $(A,\rho)$ be a C*-probability space such that  $A\neq \C$. Then $\freeprod_{k=1}^n (A,\rho)$ contains a Haar unitary for large enough $n$. 
\end{lemma}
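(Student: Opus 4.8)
The plan is to build a Haar unitary inside $\freeprod_{k=1}^{n}(A,\rho)$ by combining two standard facts: first, that the reduced free product of $n$ copies of a unital C*-algebra with states admits a natural copy of a reduced free product of $n$ copies of $C^*(\Z)$ with well-chosen states, once we locate a single element of $A$ whose distribution is not a point mass; and second, that a free product of enough such circular/Bernoulli-type building blocks produces a Haar unitary. Concretely, since $A\neq\C$, pick a self-adjoint element $a\in A$ with $\rho(a^2)\neq\rho(a)^2$, so that the (real, compactly supported) distribution $\mu$ of $a$ with respect to $\rho$ is not a Dirac mass. Let $a_k\in\freeprod_{k=1}^{n}(A,\rho)$ denote the image of $a$ in the $k$-th free factor; then $a_1,\dots,a_n$ are freely independent, each with distribution $\mu$.

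The key step is then to show that for $n$ large enough a suitable non-commutative word in $a_1,\dots,a_n$ (or in their bounded Borel functional calculus) is a Haar unitary. Here is the cleanest route I would take. After an affine rescaling we may assume $\mu$ is supported in $[0,1]$ and is not a point mass; choose $t\in(0,1)$ in the interior of the support so that $p:=\mu([0,t))\in(0,1)$, and set $e_k:=\chi_{[0,t)}(a_k)$, a projection in the $k$-th free factor with $\rho(e_k)=p$. Thus $e_1,\dots,e_n$ are freely independent projections each of trace $p$. Now it is a classical computation in free probability (Voiculescu; see \cite{dykema-nica-voiculescu}) that a free family of projections of a common trace generates, for suitable $n$, a copy of a matrix algebra or, more to the point, that one can extract from sufficiently many free copies a partial isometry realizing a cyclic shift of trace-$\tfrac1n$ projections, whose associated unitary is Haar. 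Alternatively, and perhaps most transparently: from two free projections $e,f$ of trace $p$ one obtains (via the polar part of $ef$, or via the two free Bernoulli unitaries $u_e=1-2e$, $u_f=1-2f$) a pair of free non-trivial symmetries, and the product $u_eu_f$ of two free symmetries with $\rho(u_e)=\rho(u_f)=1-2p$ is a unitary whose $\rho$-distribution is an explicit (arcsine-type) measure on $\T$; taking further free products of such unitaries and invoking that the free convolution of sufficiently many non-degenerate symmetric measures on $\T$ is the Haar measure, we get that a word in $a_1,\dots,a_n$ is a Haar unitary once $n$ is large.

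The main obstacle — and the point where I expect to spend real effort — is making precise the last assertion: identifying an explicit polynomial or Borel-functional-calculus expression in finitely many of the $a_k$ that is \emph{exactly} a Haar unitary (not merely approximately, and not merely having a distribution that is absolutely continuous). I would handle this by reducing to the model case of free Haar-distributed-in-the-limit symmetries: it suffices to produce, inside $\freeprod_{k=1}^{n}(A,\rho)$ for some fixed $n$, two freely independent Haar unitaries, since the product $v_1v_2$ of two free Haar unitaries is again a Haar unitary, and more robustly since a unitary that is free from some Haar unitary and has the Haar unitary itself available can be normalized. Actually the slickest finish is: it suffices to find \emph{one} free copy of $C^*(\Z)$ with the tracial (Haar) state sitting inside $\freeprod_{k=1}^{n}(A,\rho)$. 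To get that, I would use that from $m$ free copies of a fixed two-point (or non-degenerate finitely-supported) distribution one can build, via the free-probabilistic "free dice" or "free Poisson" constructions, an element whose distribution approximates the semicircle, hence whose spectral projections have any prescribed trace, and then stack these to manufacture a free family of projections with traces $1/N, 2/N-1/N,\dots$ realizing matrix units of $M_N$ plus enough room; the standard fact that $M_N\otimes(\text{something with a trace-}1/N\text{ Haar-able corner})$ contains a Haar unitary then closes it. I would keep the exposition light by citing the relevant free-independence computations in \cite{dykema-nica-voiculescu} rather than redoing them.
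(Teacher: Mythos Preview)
Your proposal has a genuine gap at the very first concrete step: you write $e_k:=\chi_{[0,t)}(a_k)$ and treat this as a projection in the $k$-th free factor. But $\freeprod_{k=1}^{n}(A,\rho)$ is a C*-algebra, not a von Neumann algebra, and Borel functional calculus is not available there. Unless the spectrum of $a$ happens to be disconnected (which you have no reason to assume), the indicator $\chi_{[0,t)}$ is discontinuous and $e_k$ simply need not lie in the algebra. This invalidates the construction of the free symmetries $u_{e_k}=1-2e_k$ as well, and everything downstream collapses.

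Even granting the projections, your endgame relies on the assertion that the multiplicative free convolution of sufficiently many non-degenerate symmetric measures on $\T$ is \emph{exactly} Haar measure, or at least has a property you can convert into a Haar unitary by continuous functional calculus. You do not justify this, you flag it yourself as ``the main obstacle,'' and the various fallback sketches (matrix units from free projections, free Poisson approximations to the semicircle, corners containing Haar unitaries) are each vague and each reintroduce the same Borel-functional-calculus issue.

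The paper's proof sidesteps both problems by working on the additive side. One picks a centered self-adjoint $a\in A$ with distribution $\mu$, forms $b=a_1+\cdots+a_n$ with distribution $\boxplus_{k=1}^n\mu$, and invokes the Bercovici--Voiculescu superconvergence theorem \cite{bercovici-voiculescu}: for $n$ large this additive free convolution is absolutely continuous with support a single interval. The point is that absolute continuity makes the cumulative distribution function of $b$ \emph{continuous}, so $u=\exp(2\pi i F(b))$ is obtained by \emph{continuous} functional calculus and lies in $C^*(1,b)\subseteq\freeprod_{k=1}^n(A,\rho)$; it is Haar because push-forward by $F$ sends the distribution to Lebesgue measure on $[0,1]$ (this is \cite[Proposition~4.1(i)]{d-h-r}). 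So the right fix for your argument is not to hunt for projections but to look at the sum of the free copies and quote superconvergence for additive free convolution, which is exactly designed to put you back inside the continuous functional calculus.
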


\begin{proof}
This is  a consequence of Bercovici and Voiculescu’s results
on superconvergence in the free central limit theorem \cite{bercovici-voiculescu}. We follow the argument used in \cite[Claim 2]{choda-dykema}.

Choose $a\in A$,  selfadjoint element of norm 1 such that $\rho(a)=0$ (guaranteed to exist since $A\neq \C$). Let $\mu$
be its distribution with respect to $\rho$. By \cite[Proposition 8]{bercovici-voiculescu}, for a large enough $n$ the free convolution $\nu= \boxplus_{k=1}^n\mu$ is absolutely continuous with respect to the Lebesgue measure and has support equal to an interval $[\alpha,\beta]$. For $k=1,\ldots,n$, let $\theta_k\colon A\hookrightarrow \freeprod_{i=1}^n A$ denote the embedding into the $k$-th factor,  and set $a_k=\theta_k(a)$. Then $a_1,a_2,\ldots,a_n$ are freely independent and have distribution $\mu$. Hence, the distribution of  $b=a_1+a_2+\ldots+a_n$ is the  measure $\nu$. It follows that $C^*(1,b)$ contains a Haar unitary, by \cite[Proposition 4.1 (i)]{d-h-r}.
\end{proof}

\begin{theorem}\label{selflessunitary}
Let $(A,\rho)$ be a C*-probability space, with  $A\neq \C$ and  $\rho$ state with faithful GNS.  The following are equivalent:
\begin{enumerate}[(i)]
\item
$(A,\rho)$ is selfless.

\item
The first factor embedding 
\[
(A, \rho) \hookrightarrow (A, \rho) \ast (C, \kappa)
\] 
is existential, for any   $(C,\kappa)$, where $C$ is separable, $\kappa$ has faithful GNS representation, 
and $(C,\kappa)$ embeds in $(A^\U,\rho^\U)$ for some ultrafilter $\U$.

\item
The first factor embedding 
\[
(A, \rho) \hookrightarrow (A, \rho) \ast (C, \kappa)
\] 
is existential, for some  $C\neq \C$ and state $\kappa$  with faithful GNS representation.

\item
The first factor embedding 
\[
(A,\rho)\hookrightarrow (A,\rho)*(C(\T),\lambda)
\] 
is existential,  where $\lambda$ is the trace induced by the normalized Lebesgue measure on $\T$.

\item
The first factor embedding 
\[
(A,\rho)\hookrightarrow (A,\rho)*(C_r^*(\F_\infty),\tau)
\] 
is existential.

\item
The first factor embedding 
\[
(A,\rho)\to \freeprod_{i=1}^\infty (A,\rho)
\] 
is existential.
\end{enumerate}
\end{theorem}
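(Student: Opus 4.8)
\textbf{Proof plan for Theorem \ref{selflessunitary}.}

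The plan is to establish the equivalences by proving a cycle together with a few direct implications, routing everything through the strongest-looking statement (ii). First I would note the trivial implications: (ii) $\Rightarrow$ (iv), (ii) $\Rightarrow$ (v) (both $C(\T)$ and $C_r^*(\F_\infty)$ embed in $(A^\U,\rho^\U)$ — the former because, by Lemma \ref{superconvergence}, $\freeprod_{k=1}^n(A,\rho)$ contains a Haar unitary for large $n$, hence so does $(A^\U,\rho^\U)$ via the embeddings $\sigma_n$ witnessing selflessness in the presence of (i), but to avoid circularity one should instead observe that (ii) is only invoked once (i) is known; I will in fact prove (i) $\Rightarrow$ (ii) first and then all the ``embeddable $(C,\kappa)$'' instances follow), (iv) $\Rightarrow$ (iii), (v) $\Rightarrow$ (iii) (take $C = C_r^*(\F_\infty) \neq \C$), and (vi) $\Rightarrow$ (i) is immediate since the first factor embedding $(A,\rho)\hookrightarrow(A,\rho)*(A,\rho)$ factors through $(A,\rho)\to\freeprod_{i=1}^\infty(A,\rho)$ as the inclusion of the first factor followed by identifying the rest; by Lemma \ref{lemexistential}(i) existentiality of the composite gives existentiality of the first-factor map into $(A,\rho)*(A,\rho)$ once one knows the ``tail'' map $(A,\rho)*(A,\rho)\hookrightarrow\freeprod_{i=1}^\infty(A,\rho)$ is an embedding — but more simply, $\freeprod_{i=1}^\infty(A,\rho)\cong(A,\rho)*\bigl(\freeprod_{i=2}^\infty(A,\rho)\bigr)$, so (vi) is the special case of (iii) with $C=\freeprod_{i=2}^\infty(A,\rho)$, and $(iii)\Rightarrow(i)$ will be the key step.

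The heart of the argument is (i) $\Rightarrow$ (ii) and (iii) $\Rightarrow$ (i). For (i) $\Rightarrow$ (ii): given selfless $(A,\rho)$ and $(C,\kappa)$ separable with faithful GNS embedding $(C,\kappa)\hookrightarrow(A^\U,\rho^\U)$, I would iterate. From (i) we get, for some ultrafilter $\mathcal V$, an embedding $\sigma\colon(A,\rho)*(A,\rho)\to(A^{\mathcal V},\rho^{\mathcal V})$ restricting to the diagonal on the first factor. Composing ultrapowers, a standard diagonal/iteration argument (as in the first Proposition of Section \ref{definition}) produces, for a suitable ultrafilter $\mathcal W$, an embedding $(A,\rho)*(A^\U,\rho^\U)\hookrightarrow(A^{\mathcal W},\rho^{\mathcal W})$ restricting to the diagonal on the first factor — here one uses that existentiality of $(A,\rho)\hookrightarrow(A,\rho)*(A,\rho)$ passes to ultrapowers (Lemma \ref{ultraexist}) and that $(A^\U,\rho^\U)$ sits inside an iterated ultrapower of $(A,\rho)$. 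Restricting along $(C,\kappa)\hookrightarrow(A^\U,\rho^\U)$ and invoking Blanchard--Dykema (Theorem \ref{thm-blanchard-dykema}) to take the free product of the two embeddings $(A,\rho)\hookrightarrow(A,\rho)$ and $(C,\kappa)\hookrightarrow(A^\U,\rho^\U)$, we obtain $(A,\rho)*(C,\kappa)\hookrightarrow(A,\rho)*(A^\U,\rho^\U)\hookrightarrow(A^{\mathcal W},\rho^{\mathcal W})$ restricting to the diagonal on $(A,\rho)$; this is exactly the ultrapower characterization of existentiality of the first-factor embedding in (ii).

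For (iii) $\Rightarrow$ (i): suppose $(A,\rho)\hookrightarrow(A,\rho)*(C,\kappa)$ is existential for some $C\neq\C$. The idea is to manufacture a copy of $(A,\rho)$ inside $(A,\rho)*(C,\kappa)$ that is free from the first factor. By Lemma \ref{superconvergence} applied to $(C,\kappa)$ (which is allowed since $C\neq\C$), for large $n$ the algebra $\freeprod_{k=1}^n(C,\kappa)$ contains a Haar unitary $u$; since $\freeprod_{k=1}^n(C,\kappa)$ embeds existentially (by Corollary \ref{freeexistential} and iteration) into $(C,\kappa)$-side of things, I would arrange a Haar unitary $u$ in (an existential extension of) $(A,\rho)*(C,\kappa)$ that is free from $(A,\rho)$. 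The crucial known input is that $(C(\T),\lambda)*(A,\rho)\cong(A,\rho)$ when one has enough Haar unitaries — more precisely, by a standard free-probability fact a free copy of a Haar unitary allows one to absorb a free copy of $(A,\rho)$ via the ``$C(\T)$ trick'' (as in the Dykema--R\o rdam circle of ideas and Corollary \ref{corofinftyoinfty}): $(A,\rho)*(A,\rho)$ embeds into $(A,\rho)*(C,\kappa)*(A,\rho)*(A,\rho)*\cdots$, and using existentiality of the hypothesised embedding (iii) repeatedly together with Lemma \ref{lemexistential}(ii), the first-factor embedding $(A,\rho)\hookrightarrow(A,\rho)*(A,\rho)$ becomes existential. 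I expect the main obstacle to be exactly this step: carefully organizing the iteration so that the Haar unitary coming from $(C,\kappa)$'s free powers produces a genuinely free diagonal-fixing copy of $(A,\rho)$, i.e., proving the free-probabilistic absorption lemma ``a free Haar unitary lets $(A,\rho)$ absorb a free copy of itself existentially'' — this is where Lemma \ref{superconvergence}, the $C(\T)$-in-$\freeprod A$ phenomenon, and Corollary \ref{freeexistential} must be combined, and getting the ultrafilters and the directed-union bookkeeping (Lemma \ref{unusedsymbol}) to line up is the delicate part. Once (iii) $\Rightarrow$ (i) and (i) $\Rightarrow$ (ii) are in hand, the remaining implications (ii)$\Rightarrow$(iv)$\Rightarrow$(iii), (ii)$\Rightarrow$(v)$\Rightarrow$(iii), and (i)$\Leftrightarrow$(vi) close all the loops.
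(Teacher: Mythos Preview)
Your plan has two genuine gaps.

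For (i)$\Rightarrow$(ii), you propose to apply Blanchard--Dykema (Theorem~\ref{thm-blanchard-dykema}) to the embeddings $(A,\rho)\hookrightarrow(A,\rho)$ and $(C,\kappa)\hookrightarrow(A^{\mathcal U},\rho^{\mathcal U})$ in order to produce $(A,\rho)*(C,\kappa)\hookrightarrow(A,\rho)*(A^{\mathcal U},\rho^{\mathcal U})$. This fails: $\rho^{\mathcal U}$ does \emph{not} in general induce a faithful GNS representation (for instance, when $\rho$ is a trace the trace-kernel ideal of $A^{\mathcal U}$ is nonzero), so neither the reduced free product $(A,\rho)*(A^{\mathcal U},\rho^{\mathcal U})$ nor Theorem~\ref{thm-blanchard-dykema} is available on that side. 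The paper's route is different: it invokes Skoufranis's free exactness (Theorem~\ref{freeexactness}), which from the diagonal $A\hookrightarrow A^{\mathcal U}$ and the given $(C,\kappa)\hookrightarrow(A^{\mathcal U},\rho^{\mathcal U})$ produces directly an embedding $(A,\rho)*(C,\kappa)\hookrightarrow\bigl((A*A)^{\mathcal U},(\rho*\rho)^{\mathcal U}\bigr)$ compatible with the first-factor inclusions; one then composes with the existential (by Lemma~\ref{ultraexist}) ultrapower of $A\hookrightarrow A*A$. The nonseparable case is handled afterwards by passing to separable elementary submodels and Lemma~\ref{unusedsymbol}, which your outline also omits.

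For (iii)$\Rightarrow$(i), your sketch correctly identifies Lemma~\ref{superconvergence} as the source of a Haar unitary in $\freeprod_{k=1}^n(C,\kappa)$, but never explains how a free Haar unitary manufactures a free copy of $(A,\rho)$ itself; the ``$C(\mathbb T)$ trick'' you allude to is left entirely unspecified, and you yourself flag it as the main obstacle. The paper does not attempt (iii)$\Rightarrow$(i) directly but instead proves the chain (iii)$\Rightarrow$(iv)$\Rightarrow$(v)$\Rightarrow$(vi)$\Rightarrow$(i). Your superconvergence idea \emph{is} the content of (iii)$\Rightarrow$(iv): iterate Corollary~\ref{freeexistential} so that $A\hookrightarrow A*\freeprod_{k=1}^n C$ is existential, then factor through $A\hookrightarrow A*C(\mathbb T)$ via the Haar unitary. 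The step you are missing is (v)$\Rightarrow$(vi): if $u\in A*C(\mathbb T)$ is the generating Haar unitary, then the conjugates $u^kAu^{-k}$ for $k\in\mathbb Z$ are pairwise freely independent with faithful restricted state, so $C^*(u^kAu^{-k}:k\in\mathbb Z)\cong\freeprod_{k\in\mathbb Z}(A,\rho)$, and the first-factor embedding $A\hookrightarrow\freeprod_{i=1}^\infty A$ factors through $A\hookrightarrow A*C(\mathbb T)$. This conjugation trick is the concrete absorption mechanism your outline lacks.
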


\begin{proof}
We shall use repeatedly that if an existential embedding $\theta$ can be factored as $\theta=\theta_2\theta_1$, where
$\theta_1$ and $\theta_2$ are embeddings, then $\theta_1$ is  existential (Lemma \ref{lemexistential}).
We shall use the phrase  ``$\theta$ factors through $\theta_1$" to describe this situation (with the understanding that $\theta_2$
is also an embedding).


(i) implies (ii). 
Suppose first that $A$ is separable. Let $\U$ be an ultrafilter on $\N$. 
From the embeddings $A\hookrightarrow A^\U$ and $(C,\kappa)\hookrightarrow (A^\U,\rho^\U)$
we get, by Theorem \ref{freeexactness}, an embedding $(A,\rho)*(C,\kappa)\hookrightarrow  ((A*A)^\U, (\rho*\rho)^\U)$ such that the following diagram commutes
\[
\begin{tikzcd}
A^\U \ar[r, hook] & (A*A)^\U\\
A \ar[r, hook]\ar[u, hook] & A*C\ar[u, hook]
\end{tikzcd}.
\]
The  top horizontal arrow   $\theta^\U\colon A^{\U}\hookrightarrow (A*A)^\U$ is the ultrapower of the first factor embedding $\theta\colon A\to A*A$. It is existential by Lemma \ref{ultraexist}.
Thus, the  composition of the diagonal  embedding   $A\hookrightarrow A^\U$   with $\theta^\U$ is existential. Since this composition factors through  $A\hookrightarrow A*C$, the latter  is also existential.
 
To remove the assumption of separability, we argue in the standard way by expressing $(A,\rho)$ as an inductive limit of separable elementary submodels. Let $(A',\rho') \subseteq (A,\rho)$ be a separable  elementary submodel (in the theory of C*-probability spaces). 
Then $(A',\rho')$ is again selfless (Lemma \ref{subselfless}). Since $(A,\rho)$ and $(A',\rho')$ are elementarily equivalent, and $C$ is separable, 
$(C,\kappa)$ embeds in $((A')^\U,(\rho')^\U)$. Thus, the embedding $A'\hookrightarrow A'*C$ is existential, by the already established separable case of (ii). By the downward L\"{o}wenheim-Skolem theorem \cite[Theorem 2.6.2]{Cstarmodel}, $(A,\rho)$ is the direct limit of its separable elementary submodels.  It follows by Lemma \ref{unusedsymbol},
 applied to the upward directed family of separable elementary submodels of $A$, that $A\hookrightarrow A*C$ is existential.

(ii) implies (iii). Choose $C=A$.

(iii) implies (iv). Choose $C'\subseteq C$ separable and such that $C'\neq \C$. The embedding  $A\hookrightarrow A*C$ factors into embeddings
 $A\hookrightarrow A*C'$ and $A*C'\hookrightarrow A*C$, so $A\hookrightarrow A*C'$ is existential. Thus, we may assume without loss of generality that $C$ is separable.

Assume first that $A$ is separable. By Corollary \ref{freeexistential}, the embedding 
\[
(A, \rho) \ast (C, \kappa) \hookrightarrow (A, \rho) \ast (C, \kappa) \ast (C, \kappa)
\]
is existential. Since the composition of existential embeddings is existential (Lemma \ref{lemexistential}),  
\[
(A, \rho) \hookrightarrow (A, \rho) \ast (C, \kappa) \ast (C, \kappa)
\]
is existential. Repeating this argument, we obtain that  
\[
(A, \kappa) \hookrightarrow (A, \rho) \ast \freeprod_{i=1}^n (C, \kappa)
\]
is existential for all $n$. By Lemma \ref{superconvergence}, for some $n \in \mathbb{N}$, the reduced 
free product $\freeprod_{i=1}^n (C, \kappa)$  contains a Haar unitary. This implies that  the first factor embedding of $(A, \rho)$ into 
\[
(A, \rho) \ast \freeprod_{i=1}^n (C, \kappa)
\]
factors through the embedding 
\[
(A, \rho) \to (A, \rho) \ast (C(\mathbb{T}), \lambda).
\]
Thus the latter embedding is existential. We have thus proven (iv) assuming that $A$ is separable. 

For a general C*-algebra $A$, we argue as before: let $(A',\rho') \subseteq (A,\rho)$ be a separable  elementary submodel in the theory of C*-probability spaces. 
The embedding  $(A', \rho')\hookrightarrow (A, \rho) \ast (C, \kappa)$ is existential, as it is the composition of two existential embeddings. 
Since it factors through $(A', \rho|_{A'}) \hookrightarrow (A', \rho|_{A'}) \ast (C, \kappa)$, the latter is  
 existential. Consequently, 
 \[
 (A', \rho')\hookrightarrow   (A', \rho') \ast (C(\T), \lambda)
 \] 
 is existential. Passing to the direct limit over all separable elementary submodels of $(A,\rho)$, and using Lemma \ref{unusedsymbol},  we obtain  that  $(A, \rho) \to (A, \rho) \ast (C(\T), \lambda)$  is existential, as desired.

(iv) implies (v). Passing to elementary submodels  as before, we may reduce  to the case that $A$ is separable. Taking the reduced  free product with $(C(\T),\lambda)$ 
in the existential embedding $A\hookrightarrow A*C(\T)$
we get that 
\[
(A,\rho)\hookrightarrow   (A, \rho) \ast (C_r^*(\mathbb F_2), \lambda*\lambda)
\] 
is existential. But 
$C_r^*(\mathbb F_\infty)$ embeds in  $C_r^*(\mathbb F_2)$. Thus, arguing as before, the embedding 
$(A,\rho)\hookrightarrow  (A, \rho) \ast (C_r^*(\mathbb F_\infty), \tau)$ is existential.


(v) implies (vi).
Let $u\in C^*(\T)$ be the generator  Haar unitary, i.e., the  identity on $\T$. Then  $A_k=u^kAu^{-k}$, for $k\in \Z$, 
are freely independent C*-subalgebras of $(A*C(\T),\rho*\lambda)$. As shown in the proof of  \cite[Lemma 4.1]{dykema-sh},
the restriction of $\rho*\lambda$ to  $C^*(A_k:k\in \Z)$ is a state inducing a faithful GNS representation, and in fact 
\[
C^*(A_k:k\in \Z)\cong \freeprod_{k\in \Z} (A_k,(\rho*\lambda)|_{A_k})\cong  \freeprod_{k\in \Z} (A,\rho).
\]
Thus, there exists an embedding of  $\freeprod_{k=1}^\infty(A,\rho)$ into  $(A,\rho)*(C(\T),\lambda)$ agreeing with $\mathrm{Ad}_{u^k}$
on the $k$-th factor of the free product $\freeprod_{k=1}^\infty(A,\rho)$. This shows that we can factor 
$A\hookrightarrow A*C(\T)$ through the first factor embedding $A\hookrightarrow \freeprod_{k=0}^\infty A$,
proving (vi).

(vi) implies (i) is obvious.
\end{proof}

\begin{remark}
The characterization of selflessness in Theorem \ref{selflessunitary} (iv) can be regarded as a C*-version of Popa's theorem asserting the existence, in a tracial ultrapower of a separable II$_1$ factor, of a 
Haar unitary  that is freely independent with the  diagonal copy of the factor \cite{popa}. Popa's theorem has, in part, motivated our definition of selfless C*-probability space. Notice, however, that Theorem \ref{selflessunitary} (iv) asserts more than the existence of a Haar unitary in $A^{\U}$ freely independent from $A$, as it amounts to asking that
\begin{enumerate}
\item
there exists a Haar unitary $u\in A^\U$ freely independent from the diagonal copy of $A$ in $A^\U$, with respect to the limit state $\rho^\U$,
\item
the  restriction of  $\rho^\U$  to $C^*(A,u)\subseteq A^\U$ is a state inducing a faithful GNS representation.
\end{enumerate}
\end{remark}

\begin{theorem}\label{unusedsymbolthm}
Let $I$ be an infinite index set.
Let $(A_i,\rho_i)_{i\in I}$  be C*-probability spaces with $\rho_i$ inducing a faithful GNS representation for all $i\in I$. 
Suppose that for infinitely many $i$ the state $\rho_i$ vanishes on some unitary of $A_i$. Then $(A,\rho)=\freeprod_{i\in I} (A_i,\rho_i)$ 
is selfless.
\end{theorem}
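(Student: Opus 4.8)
The plan is to reduce the statement to Theorem~\ref{selflessunitary}(iii) applied to a suitable splitting of the index set. First I would pick an infinite subset $I_0\subseteq I$ such that for each $i\in I_0$ the state $\rho_i$ vanishes on some unitary $u_i\in A_i$, and split $I=I_0\sqcup I_1$ (if $I_1$ is empty, drop the corresponding factor). Then, by associativity of reduced free products, I can write
\[
(A,\rho)\cong (A',\rho')*(A'',\rho''),
\]
where $(A',\rho')=\freeprod_{i\in I_1}(A_i,\rho_i)$ and $(A'',\rho'')=\freeprod_{i\in I_0}(A_i,\rho_i)$. Each factor has a state inducing a faithful GNS representation (finite or infinite reduced free products of such do), so this is legitimate; if $I_1=\emptyset$, I instead further split $I_0$ itself into two infinite pieces. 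The upshot is that, after relabeling, it suffices to treat the case $I=\N$ with \emph{every} $\rho_i$ vanishing on a unitary $u_i\in A_i$, and to show selflessness of $\freeprod_{i=1}^\infty(A_i,\rho_i)$.

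Next I would exhibit, inside $(A,\rho)=\freeprod_{i=1}^\infty(A_i,\rho_i)$, a self-embedding with room to spare, mimicking the proof of the proposition preceding Corollary~\ref{corofinftyoinfty}. Let $B_n=\freeprod_{i=1}^n(A_i,\rho_i)$, viewed inside $A$. The point is that, by re-shuffling the factors, for each $n$ there is an isomorphism $\sigma_n\colon (A,\rho)*(A,\rho)\to (A,\rho)$ whose restriction to the first-factor copy of $A$ restricts further to the identity on $B_n$: indeed $(A,\rho)*(A,\rho)\cong\freeprod_{i=1}^\infty(A_i,\rho_i)*\freeprod_{i=1}^\infty(A_i,\rho_i)$, which is again a reduced free product over a countably infinite family containing the $(A_i,\rho_i)$'s (each appearing twice), so it is isomorphic to $(A,\rho)$ via a bijection of the index set that fixes the first $n$ factors. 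Then $\sigma_n\circ\theta=\mathrm{id}|_{B_n}$, where $\theta$ is the first-factor embedding $(A,\rho)\hookrightarrow(A,\rho)*(A,\rho)$, and the sequence $(\sigma_n)_n$ assembles into an embedding $\sigma\colon (A,\rho)*(A,\rho)\to(A^\U,\rho^\U)$ (for $\U$ a nonprincipal ultrafilter on $\N$) with $\sigma\theta$ the diagonal embedding of $A$. By Lemma~\ref{unusedsymbol} (applied to the directed family $(B_n)_n$), or directly from this $\sigma$, the embedding $\theta$ is existential, i.e.\ $(A,\rho)$ is selfless — \emph{provided} $A\neq\C$, which holds since each $A_i$ carrying a nontrivial unitary $u_i$ with $\rho_i(u_i)=0$ is itself $\neq\C$.

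The one point that genuinely needs the hypothesis about vanishing on unitaries — rather than just infiniteness of $I$ — is that here I am \emph{not} assuming the $A_i$ are all equal, so the naive re-shuffling argument above is not quite enough to recognize $(A,\rho)*(A,\rho)$ as isomorphic to $(A,\rho)$ on the nose. So the actual route I would take is via Theorem~\ref{selflessunitary}(iii): it suffices to show that $(A,\rho)\hookrightarrow(A,\rho)*(C,\kappa)$ is existential for \emph{some} $(C,\kappa)$ with $C\neq\C$. Take $(C,\kappa)=(A_{i_0},\rho_{i_0})$ for one of the infinitely many indices with a $\rho$-null unitary. Then $(A,\rho)*(C,\kappa)=\freeprod_{i\in I}(A_i,\rho_i)*(A_{i_0},\rho_{i_0})$, which is a reduced free product over the index set $I\sqcup\{*\}$, where the extra factor is a \emph{copy} of $(A_{i_0},\rho_{i_0})$. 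Since $I$ is infinite and $A_{i_0}$ contains the unitary $u_{i_0}$ with $\rho_{i_0}(u_{i_0})=0$, I can invoke the argument in the proof of Theorem~\ref{selflessunitary}, step (v)$\Rightarrow$(vi): the copies $w^k A w^{-k}$ of $A$ conjugated by powers of a Haar unitary $w$ built from $u_{i_0}$ (via the construction in \cite[Lemma 4.1]{dykema-sh}, which only needs one $\rho$-null unitary to produce a free Haar unitary after suitable manipulation) are freely independent, yielding a copy of $\freeprod_{k\in\Z}(A,\rho)$ inside $(A,\rho)*(C,\kappa)$ that absorbs the original $A$ on the zero-th factor and factors the first-factor embedding $A\hookrightarrow A*C$ through $A\hookrightarrow\freeprod_{k=0}^\infty A$. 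Thus $A\hookrightarrow A*C$ is existential, and Theorem~\ref{selflessunitary} gives selflessness.

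The main obstacle I anticipate is the bookkeeping in this last step: producing an honest Haar unitary (or at least a unitary generating a $C(\T)$-copy that is free from $A$ with faithful GNS restriction) out of a single $\rho_{i_0}$-null unitary $u_{i_0}$, rather than out of a free family, and checking that the restricted free-product state remains faithful on the enlarged algebra. This is where I would lean most heavily on the Dykema–Sherman type computation \cite[Lemma 4.1]{dykema-sh} and on Lemma~\ref{superconvergence} (to pass from one $\rho$-null unitary, via finitely many free copies living inside $A*C*C*\cdots$, to a genuine Haar unitary), combined with the fact established along the way that $A\hookrightarrow A*\freeprod_{i=1}^n C$ is existential for every $n$ once it is existential for $n=1$. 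Everything else is a routine application of Lemmas~\ref{lemexistential}, \ref{unusedsymbol}, and Theorem~\ref{selflessunitary}.
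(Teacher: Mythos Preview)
Your proposal has a genuine gap: the argument in the third paragraph is circular. You set out to show that the first factor embedding $A\hookrightarrow A*C$ is existential (for $C=A_{i_0}$), and you propose to factor it as $A\hookrightarrow\freeprod_{k}A\hookrightarrow A*C$. But factoring an embedding through an intermediate one never \emph{proves} existentiality of the composition; at best it transfers existentiality from the composition to the first factor (Lemma~\ref{lemexistential}(i)), which is the wrong direction for what you need. Your closing remark, that ``$A\hookrightarrow A*\freeprod_{i=1}^n C$ is existential for every $n$ once it is existential for $n=1$,'' then invokes the very conclusion you are trying to establish.

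There is also a concrete error in the construction: a single unitary $u_{i_0}$ with $\rho_{i_0}(u_{i_0})=0$ need \emph{not} be Haar (think of a symmetry, $u^2=1$), and \cite[Lemma 4.1]{dykema-sh} does not manufacture a Haar unitary from one such element. The hypothesis gives you infinitely many indices with $\rho$-null unitaries precisely so that you can pick \emph{two} of them, say $u_i$ and $u_j$ with $i\neq j$ outside any given finite $F\subset I$; their product $u_iu_j$ is then genuinely Haar (an easy freeness computation), lives in $A_i*A_j\subseteq A$, and is free from $A_F$. This gives, via Blanchard--Dykema, an embedding $A_F*C(\mathbb T)\hookrightarrow A_F*A_i*A_j\subseteq A$ that restricts to the inclusion on $A_F$, so $\theta|_{A_F}\colon A_F\hookrightarrow A_F*C(\mathbb T)$ is relatively existential in $A$ for every finite $F$. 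Lemma~\ref{unusedsymbol} then shows $A\hookrightarrow A*C(\mathbb T)$ is existential, and Theorem~\ref{selflessunitary}(iv) finishes. This is the paper's route, and it is where the ``infinitely many'' hypothesis does its real work: it guarantees two unused $\rho$-null unitaries outside every finite piece.
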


\begin{proof} 
Let $\theta \colon A\to (A,\rho)*(C(\T),\lambda)$ be the first factor embedding. It will suffice to show that $\theta$ is an existential embedding, by the previous theorem. 

For each finite set $F\subset I$, let 
\[
(A_F,\rho|_{A_F})=\freeprod_{i\in F}(A_{i},\rho_i)
\]
(where we regard $A_F$ as a C*-subalgebra of $A$).  
Then $(A_F)_{F}$ is an upward directed family of C*-subalgebras of $A$, indexed by the the finite subsets of $I$, with dense union in $A$.
The C*-algebras $A_F*C(\T)$ also form an upward directed family of C*-subalgebras of $A*C(\T)$, with dense union. Clearly, 
$\theta(A_F)\subseteq A_F*C(\T)$. By Lemma \ref{unusedsymbol}, to show that $\theta$ is existential it will suffice to show that $\theta|_{A_F}$ is relatively existential in $A$ for all $F$. We show this next.

Fix a finite set $F\subset I$. Find distinct indices $i,j\in I\backslash F$ such that $A_i$ and $A_j$ contain unitaries $u_i$ and $u_j$, respectively, on which the states vanish. Since they are freely independent, it is easily checked that  $u=u_iu_j\in A_i*A_j$ is a Haar unitary. 
Let $\phi\colon (C(\T),\lambda)\hookrightarrow (A_i*A_j,\rho_i*\rho_j)$ be the embedding induced by $u$. 
By Blanchard and Dykema's Theorem \ref{thm-blanchard-dykema}, there exists an embedding $\sigma_F$ of  $A_F*C(\T)$ in $A_F*A_i*A_{j}\subseteq A$ such that
$\sigma_F\circ (\theta|_{A_F})$ agrees with the inclusion of $A_F$ in $A$.  In particular,   
$\theta|_{A_F}\colon A_F\to A_F*C(\T)$ is relatively existential in $A$, as desired.
\end{proof}

\section{Purely infinite/stably finite dichotomy}\label{dichotomy_sec}

Let $(A,\rho)$ be a C*-probability space, with $\rho$ inducing a faithful GNS representation.
Let us say that $A$ has the uniform Dixmier property with respect to $\rho$ if there exist $N\in \N$ 
and $0<\gamma<1$ such that for any $c\in A$, with $\rho(c)=0$, there exist unitaries $u_1,\ldots,u_N\in A$ such that
we have
\[
\Big\|\frac1N\sum_{i=1}^N u_icu_i^*\Big\|\leq\gamma \|c\|. 
\]
It is well known that this property implies that $A$ is a simple C*-algebra,
and that if $\rho$ is not a trace, then $A$ is traceless, while if $\rho$ is a trace, then it is the unique
tracial state of $A$; see for example the last three paragraphs of the proof of \cite[Proposition 3.2]{dykema-somefree}. The uniform Dixmier property with respect to $\rho$ for $(N,\gamma)$ is $\forall\exists$-axiomatizable
in the language of C*-probability spaces \cite[Lemma 7.2.2]{Cstarmodel}. For a more general version of the uniform Dixmier property, see \cite[Definition 3.1]{ART}.

Let us recall the definition  of the property of strict comparison of positive elements by  traces. We will restrict ourselves to the case of a simple unital C*-algebra with a unique trace, though this property can be defined much more generally (see \cite[Proposition 6.2]{ERS}). Let $A$ be a simple unital C*-algebra  with a unique tracial state $\rho$. We say that $A$ has the property  of strict comparison of positive elements by $\rho$ if 
for any two positive elements  $a,b\in A\otimes\mathcal K$
\[
d_\rho(a)<d_\rho(b)\Rightarrow a\precsim b,
\]
where $d_\rho(c):=\lim_n \rho(c^{\frac1n})$ and  $\precsim$ denotes the Cuntz comparison relation.

\begin{theorem}\label{dichotomy}
Let $(A,\rho)$ be a selfless C*-probability space (where $\rho$ has faithful GNS representation). 
Then $(A,\rho)$ has the uniform Dixmier property  with respect to $\rho$. In particular,
$A$ is a simple C*-algebra that is either traceless, if $\rho$ is not a trace, or such that $\rho$ is the unique tracial state on $A$. 
In addition, the following are true:
\begin{enumerate}[(i)]
\item
If $\rho$ is faithful and not a trace, then $A$ is purely infinite.

\item
If $\rho$ is a trace (necessarily faithful), then $A$  has  stable rank one and strict comparison of positive elements by $\rho$, 
and $\rho$ is the unique 2-quasitracial state on $A$.
\end{enumerate}
\end{theorem}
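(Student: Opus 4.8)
\textbf{Proof plan for Theorem \ref{dichotomy}.}

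The plan is to first establish the uniform Dixmier property and then derive the structural consequences (i) and (ii) from it together with the selfless hypothesis. For the Dixmier property, I would use the averaging supplied by a free Haar unitary. Concretely, by Theorem \ref{selflessunitary}(iv) the first factor embedding $(A,\rho)\hookrightarrow (A,\rho)*(C(\T),\lambda)$ is existential, and inside $(A,\rho)*(C(\T),\lambda)$ the generator Haar unitary $u$ together with its conjugates $u^k a u^{-k}$ witnesses strong averaging: for $c\in A$ with $\rho(c)=0$, the average $\frac1N\sum_{k=1}^N u^k c u^{-k}$ has norm tending to $0$ as $N\to\infty$ by the standard estimate in a reduced free product (this is essentially Haagerup-type inequality / the computation in Dykema's work). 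Fixing $N$ and $\gamma<1$ so that this average has norm $\le\gamma\|c\|$ for \emph{all} $c$ of norm one in the dense unit ball, one expresses ``$\exists$ unitaries $u_1,\dots,u_N$ with $\|\frac1N\sum u_i c u_i^*\|\le\gamma$'' as the value of an $\exists$-formula evaluated at $c$; since the embedding into $(A,\rho)*(C(\T),\lambda)$ is existential, the infimum computed in $A$ equals the one computed in the free product, which is $\le\gamma$. This gives the uniform Dixmier property with respect to $\rho$ in $A$ itself. The cited consequences — simplicity, and that $\rho$ is the unique trace (or $A$ traceless) — are then quoted from \cite[Proposition 3.2]{dykema-somefree}.

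For part (i), with $\rho$ faithful and non-tracial, I would show $A$ is purely infinite by producing, for every nonzero positive $a\in A$, elements $x,y$ with $xay=1$. Using that $\rho$ is non-tracial, one finds a self-adjoint $h$ with $\rho$ not tracial on $C^*(h)$, hence an element on which $\rho$ ``sees'' infiniteness; more cleanly, I would argue inside the ultrapower: selflessness gives a copy of $A$ in $A^\U$ free from the diagonal, and since $\rho$ is non-tracial there is a non-unitary isometry-like element $v$ in that free copy with $\rho^\U(v^*v)=1>\rho^\U(vv^*)$, which by the structure of reduced free products produces a genuine proper isometry in $C^*(A,v)\subseteq A^\U$. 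Feeding the relation ``$a$ is properly infinite'' (an $\exists$-statement: $\exists\,s,t$ with $s^*as=t^*at=1$, $s^*at=0$, say, up to the usual $\varepsilon$-approximations and functional calculus reductions) back through the existential embedding $A\hookrightarrow A^\U$ shows every nonzero positive element of $A$ is properly infinite; combined with simplicity this is exactly pure infiniteness. The main obstacle here is the reduced-free-product computation showing that non-traciality of $\rho$ forces a proper isometry in the free copy — this is where one must invoke the Dykema--R\o rdam style analysis of free products, or argue via the Kirchberg--R\o rdam characterization of pure infiniteness in terms of comparison.

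For part (ii), with $\rho$ a faithful trace, strict comparison follows from the averaging by free Haar unitaries: given positive $a,b\in A\otimes\mathcal K$ with $d_\rho(a)<d_\rho(b)$, one works in a matrix amplification and uses that in $(A,\rho)*(C(\T),\lambda)$ the conjugates of $a$ by powers of $u$ are free, so their average has spectrum concentrating near $d_\rho(a)$, allowing $a$ to be Cuntz-dominated by a corner of $b$; pushing this back through the existential embedding (the relevant statement $a\precsim_\varepsilon b$ being an $\exists$-condition) gives $a\precsim b$ in $A$. Stable rank one I would deduce either from strict comparison plus the (quotient-free, stably finite) structure via the results invoked in the introduction, or directly: selflessness lets one approximate any element $x$ by $x$ times a unitary correction built from the free Haar unitary, yielding that invertibles are dense — again an $\exists$-statement transported through the existential embedding. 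Finally, uniqueness of the 2-quasitrace follows since strict comparison by the unique trace $\rho$, together with $A$ being exact (inherited, or irrelevant here because $A$ is unital with stable rank one), forces every 2-quasitrace to be a trace by Haagerup's theorem in the exact case, or by Blanchard--Kirchberg-type arguments; alternatively it is subsumed by the cited classification-era results. I expect the genuinely hard step to be the non-tracial case (i): extracting an honest proper isometry in $A^\U$ from the mere non-traciality of $\rho$ requires a careful free-probability computation, whereas the tracial statements reduce fairly mechanically to ``$\exists$-conditions transported through an existential embedding.''
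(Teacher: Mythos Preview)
Your overall strategy---transport structural properties from a larger free product back to $A$ via the existential embedding---is exactly right, but you are working much harder than necessary and introducing real gaps in the process. The paper's proof is almost entirely citation: set $(B,\tau)=(A,\rho)*(C_r^*(\mathbb F_\infty),\tau)$, observe that the first-factor embedding is existential, and then note that every property in the statement (uniform Dixmier with fixed $(N,\gamma)$, simple purely infinite, stable rank one, strict comparison by traces) is $\forall\exists$-axiomatizable and is already known to hold for $B$ by results of Avitzour/Dykema, Dykema--R{\o}rdam, Dykema--Haagerup--R{\o}rdam, and \cite{nccw} respectively. Since existential embeddings reflect $\forall\exists$-sentences, $A$ inherits each property automatically. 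Your attempt to re-derive the averaging estimate, the proper-isometry construction, and the Cuntz-subequivalence argument by hand is unnecessary, and in the case of~(i) your sketch (``non-traciality of $\rho$ yields an isometry-like $v$ with $\rho^\U(v^*v)>\rho^\U(vv^*)$, whence a proper isometry in $C^*(A,v)$'') is not justified---non-traciality alone does not directly hand you such an element, and the passage to a genuine proper isometry is precisely the content of the Dykema--R{\o}rdam analysis you would end up citing anyway. You also inverted the difficulty: once one cites the literature, (i) is the easy case, not the hard one.

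There is a genuine error in your treatment of the 2-quasitrace uniqueness in~(ii). You invoke Haagerup's theorem that 2-quasitraces on exact C*-algebras are traces, but $A$ is not assumed exact, and stable rank one does not force 2-quasitraces to be traces. The paper instead argues as follows: by \cite{ng-robert} the free product $B$ has $\rho*\tau$ as its unique 2-quasitracial state; given any 2-quasitrace $\phi$ on $A$, form the limit 2-quasitrace $\phi_\U$ on $A^\U$, pull it back along the embedding $\sigma\colon B\to A^\U$ (where $\sigma\theta$ is the diagonal), and conclude $\phi_\U\sigma=\rho*\tau$ by uniqueness on $B$, hence $\phi=\rho$. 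This avoids any exactness hypothesis.
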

\begin{proof}
Let $(B,\rho*\tau)=(A,\rho)*(C_r^*(\mathbb F_\infty),\tau)$, and let $\theta\colon A\to B$ the first factor embedding, which we know 
is existential.
We will use repeatedly that if a property of C*-algebras or C*-probability spaces is $\forall\exists$-axiomatizable, and it is satisfied by $(B,\rho*\tau)$, then it is also satisfied by $(A,\rho)$, 
owing to the fact that the embedding $\theta$ is existential.	

By the proof of \cite[Theorem 2]{dykema} or \cite[Lemma 3.0]{avitzour},  
$B$ has the uniform Dixmier property with respect to $\rho*\tau$ with $N=5$ and $\gamma=2/\sqrt 5$. Since the $(N,\gamma)$ uniform Dixmier property is $\forall\exists$-axiomatizable in the  language of C*-probability spaces, it follows that  $A$ has the  uniform Dixmier property with respect to $\rho$.

(i)  If $\rho$ is faithful and not tracial, then Dykema and R{\o}rdam show in  \cite[Theorem 2.1]{dykema-rordamII} that  $B$  is purely infinite. 
Since being simple and purely infinite is $\forall\exists$-axiomatizable in the language of C*-algebras \cite[3.13.7]{Cstarmodel}, it follows that
 $A$ is purely infinite.

(ii)  By \cite[Theorem 3.8]{d-h-r}, $B$ has stable rank one. Since the stable rank one property is $\forall\exists$-axiomatizable
\cite[Proposition 3.8.1]{Cstarmodel}, $A$ has stable rank one as well.


By \cite[Proposition 6.3.2]{nccw}, $B$ has strict comparison  of positive elements by the trace $\rho*\tau$. Since the class
of C*-algebras with strict comparison by traces is $\forall\exists$-axiomatizable \cite[Theorem 8.2.2]{Cstarmodel}, it follows that $A$ has strict comparison by (its unique trace) $\rho$.

By  \cite[Theorem 3.6]{ng-robert}, $\rho*\tau$ is the unique 2-quasitracial state on $B$.
To see that $\tau$ is the unique 2-quasitrace on $A$, choose an embedding  $\sigma\colon B\to A^\U$   such that $\sigma\theta$ agrees with the diagonal embedding of $A$ in $A^\U$. Let $\phi$ be a 2-quasitracial state on $A$. 
For $a\in A_\U$ with lift $(a_i)_i\in \prod_i A$, define
$\phi_\U(a)=\lim_\U \phi(a_i)$. This defines a 2-quasitracial state on $A^\U$. 
Since $\rho*\tau$ is the unique 2-quasitracial state on $B$, we must have that $\phi_\U\sigma=\rho*\tau$. 
Thus, 
\[
\phi=\phi_\U\sigma\theta=(\rho*\tau)\theta=\rho.\qedhere
\] 
\end{proof}


\section{Permanence properties}\label{permanence}

\begin{theorem}\label{directlimits}
Let $(A,\rho)$ be a C*-probability space. Suppose that $A=\overline{\bigcup_{i\in I} A_i}$, where $(A_{i})_{i\in I}$ is an upward directed family  of unital C*-subalgebras of $A$.  If  $(A_i,\rho|_{A_i})$ is selfless for all $i$, then $(A,\rho)$ is selfless.
\end{theorem}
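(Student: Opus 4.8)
The plan is to apply Lemma \ref{unusedsymbol} to the embedding $\theta\colon (A,\rho)\hookrightarrow (A,\rho)*(A,\rho)$ given by the first factor, using the family $(A_i)_{i\in I}$ downstairs and the family $(A_i*A_i)_{i\in I}$ upstairs. First I would observe that $A\neq\C$, since each $A_i$ is selfless and therefore $A_i\neq\C$; also $\rho$ has faithful GNS representation because each $(A_i,\rho|_{A_i})$ does, and any positive nonzero $a\in A$ can be approximated by a positive element of some $A_i$. The reduced free product $(A_i,\rho|_{A_i})*(A_i,\rho|_{A_i})$ embeds canonically into $(A,\rho)*(A,\rho)$ by Blanchard and Dykema's Theorem \ref{thm-blanchard-dykema} (applied to the inclusions $A_i\hookrightarrow A$ twice), and these subalgebras form an upward directed family with dense union, since $\bigcup_i A_i$ is dense in $A$ and the reduced free product construction is continuous in the appropriate sense. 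Clearly $\theta(A_i)\subseteq A_i*A_i$, so the hypotheses of Lemma \ref{unusedsymbol} are in place once we check that $\theta|_{A_i}\colon (A_i,\rho|_{A_i})\hookrightarrow (A_i*A_i,(\rho|_{A_i})*(\rho|_{A_i}))$ is relatively existential in $(A,\rho)$.

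The key point is thus to upgrade the selflessness of $(A_i,\rho|_{A_i})$ — which says $\theta|_{A_i}$ is \emph{existential} — to the statement that it is \emph{relatively existential in} $(A,\rho)$. By the characterization recalled just before Lemma \ref{unusedsymbol}, $\theta|_{A_i}$ being existential means there is an ultrafilter $\U$ and an embedding $\sigma\colon (A_i*A_i,\cdot)\hookrightarrow (A_i^{\U},(\rho|_{A_i})^{\U})$ with $\sigma\circ(\theta|_{A_i})$ the diagonal embedding of $A_i$. Composing $\sigma$ with the ultrapower $A_i^\U\hookrightarrow A^\U$ of the inclusion $A_i\hookrightarrow A$ yields an embedding into $(A^\U,\rho^\U)$ that restricts correctly on the diagonal copy of $A_i$; this is exactly what it means for $\theta|_{A_i}$ to be relatively existential in $(A,\rho)$. (Equivalently, at the level of formulas: for a tuple $\bar a$ in $A_i$, the infimum $\inf_{\bar y}\Phi(\theta(\bar a),\bar y)$ over the unit ball of $A_i*A_i$ equals $\inf_{\bar y}\Phi(\bar a,\bar y)$ over the unit ball of $A_i$ — which in turn is $\geq$ the infimum over the unit ball of $A$; combined with the trivial reverse inequality coming from the inclusion $A_i\subseteq A$, this gives relative existentiality.)

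With relative existentiality of each $\theta|_{A_i}$ established, Lemma \ref{unusedsymbol} directly gives that $\theta\colon (A,\rho)\hookrightarrow (A,\rho)*(A,\rho)$ is existential, so $(A,\rho)$ is selfless. The main obstacle is the bookkeeping in the previous paragraph: making sure that the subalgebras $A_i*A_i$ genuinely sit inside $A*A$ with compatible states and form a directed family with dense union, and that the passage from "existential" to "relatively existential in $(A,\rho)$" is clean — the latter is essentially formal but deserves a careful sentence, since it is the only place the ambient algebra $A$ enters. Everything else is an application of results already available: Theorem \ref{thm-blanchard-dykema} for the free products of inclusions and Lemma \ref{unusedsymbol} for assembling the local data into a global statement.
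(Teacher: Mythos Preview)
Your proposal is correct and follows essentially the same route as the paper: set up the directed families $(A_i)_i$ and $(A_i*A_i)_i$, note that selflessness of each $(A_i,\rho|_{A_i})$ makes $\theta|_{A_i}$ existential and hence relatively existential in $(A,\rho)$, and conclude via Lemma~\ref{unusedsymbol}. The only minor difference is in justifying that $\rho$ has faithful GNS representation: the paper does this by observing that each $A_i$ is simple (Theorem~\ref{dichotomy}) and hence so is $A$, whereas your approximation sketch is a bit terse but can be made precise.
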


\begin{proof} 
Clearly, $A\neq \C$,  since $A_i\neq \C$ for all $i$. Since each $A_i$ is a simple C*-algebra (Theorem \ref{dichotomy}), $A$
is simple, and in particular $\rho$ has faithful GNS representation.
	
Let $\theta\colon A\to A*A$  be the first factor embedding. The C*-algebras $(A_i*A_i)_i$ form an upward directed family whose direct limit is $A*A$. Moreover, $\theta(A_i)\subseteq A_i*A_i$, and $\theta|_{A_i}$, regarded as a map with codomain $A_i*A_i$, agrees with the first factor embedding of $A_i$ in $A_i*A_i$. 
Since $(A_i,\rho|_{A_i})$ is selfless,  the embedding $\theta|_{A_i}$ is existential, and in particular,  existential relative to $A$ (in  the sense of Lemma \ref{unusedsymbol}) for all $i$. It follows by Lemma \ref{unusedsymbol} that $\theta$ is existential.
\end{proof}

%
%
%
%

\begin{theorem}\label{selflessprod}
Let $(A, \rho)$ be a selfless C*-probability space. Let $(B, \tau)$ be a separable C*-probability space, where $\tau$ has faithful GNS representation. 
Then, the reduced free product $(A, \rho) * (B,\tau)$ is again selfless.
\end{theorem}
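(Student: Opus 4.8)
The plan is to reduce the statement to showing that the first-factor embedding $(A,\rho)\hookrightarrow (A,\rho)*(B,\tau)*(A,\rho)*(B,\tau)$, or equivalently $(A,\rho)*(B,\tau)\hookrightarrow \left((A,\rho)*(B,\tau)\right)*\left((A,\rho)*(B,\tau)\right)$, is existential, and to build the required splitting through an ultrapower by combining the selflessness of $(A,\rho)$ with Skoufranis--Pisier free exactness (Theorem \ref{freeexactness}) and Corollary \ref{freeexistential}. First, as in the proof of the equivalences in Theorem \ref{selflessunitary}, I would reduce to the case where $A$ is separable by passing to separable elementary submodels $(A',\rho')\preceq (A,\rho)$ (which are again selfless by Lemma \ref{subselfless}), replacing $A$ by $A'$, and at the end passing to the direct limit using Lemma \ref{unusedsymbol}; note $B$ is already separable, and selflessness of $A$ gives $A\ne\C$, hence $A*B\ne\C$, and simplicity of $A*B$ (or faithfulness of $\rho*\tau$ by Blanchard--Dykema) handles the faithful-GNS requirement in Definition \ref{selflessdef}.

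The core of the argument, assuming $A$ separable: since $(A,\rho)$ is selfless, by Theorem \ref{selflessunitary}(v) the first-factor embedding $\theta\colon (A,\rho)\hookrightarrow (A,\rho)*(C_r^*(\F_\infty),\tau_\infty)$ is existential, and more to the point there is a nonprincipal ultrafilter $\U$ on $\N$ and an embedding $\sigma\colon (A,\rho)*(A,\rho)\hookrightarrow (A^\U,\rho^\U)$ realizing the first-factor embedding $A\hookrightarrow A*A$ as an approximate diagonal splitting. Now take the reduced free product of this existential embedding $\theta_1\colon (A,\rho)\hookrightarrow (A,\rho)*(A,\rho)$ with the identity embedding $\mathrm{id}\colon (B,\tau)\hookrightarrow (B,\tau)$; by Corollary \ref{freeexistential} (both algebras separable, states with faithful GNS), the embedding
\[
\theta_1 * \mathrm{id}\colon (A,\rho)*(B,\tau)\hookrightarrow (A,\rho)*(A,\rho)*(B,\tau)
\]
is existential. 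It therefore suffices to factor the first-factor embedding $(A,\rho)*(B,\tau)\hookrightarrow \left((A,\rho)*(B,\tau)\right)*\left((A,\rho)*(B,\tau)\right)$ through $\theta_1*\mathrm{id}$, i.e.\ to embed $\left((A,\rho)*(B,\tau)\right)*\left((A,\rho)*(B,\tau)\right)$ into $(A,\rho)*(A,\rho)*(B,\tau)$ in a way that restricts to the identity on the first copy of $A*B$. But $(A*B)*(A*B)\cong A*B*A*B$ by associativity of reduced free products, and the issue is that this has \emph{two} copies of $B$ whereas the target $A*A*B$ has only one. This is where I invoke the Haar-unitary trick exactly as in the proof of (iii)$\Rightarrow$(iv)$\Rightarrow$(v)$\Rightarrow$(vi) of Theorem \ref{selflessunitary}: by Lemma \ref{superconvergence}, $\freeprod_{k=1}^n (A,\rho)$ contains a Haar unitary for large $n$, hence contains a copy of $(C(\T),\lambda)$, and iterating Corollary \ref{freeexistential} as in that proof, $(A,\rho)\hookrightarrow (A,\rho)*(B,\tau)$ being (once we know the theorem) or rather the known existential embedding $(A,\rho)\hookrightarrow (A,\rho)*\freeprod_{k=1}^n(A,\rho)*(B,\tau)$ factors through $(A,\rho)\hookrightarrow (A,\rho)*(C(\T),\lambda)*(B,\tau)$; using that a single Haar unitary generates a copy of $C_r^*(\F_\infty)$ inside $C(\T)*C(\T)$ (equivalently that $C(\T)*C(\T)$ absorbs further free factors via conjugation by powers of a Haar unitary, cf.\ \cite[Lemma 4.1]{dykema-sh} as used in (v)$\Rightarrow$(vi)), one absorbs the extra copy of $B$ — and another copy of $A$ — into the available free factors.

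Concretely, the cleanest route: it suffices to show the first-factor embedding $(A,\rho)*(B,\tau)\hookrightarrow \bigl((A,\rho)*(B,\tau)\bigr)*\bigl(C_r^*(\F_\infty),\tau_\infty\bigr)$ is existential, by Theorem \ref{selflessunitary}(v) applied to $(A,\rho)*(B,\tau)$. Since $C_r^*(\F_\infty)\cong \freeprod_{i=1}^\infty C_r^*(\Z)$ and each $(C_r^*(\Z),\lambda)$ embeds in $(A^\U,\rho^\U)$ (a free Haar unitary exists there because $(A,\rho)$ is selfless, by the Remark after Theorem \ref{selflessunitary}, or simply because $\lambda$ embeds in $A*A$ by Lemma \ref{superconvergence} and $A*A\hookrightarrow A^\U$), we have that $(C_r^*(\F_\infty),\tau_\infty)$ embeds in $(A^\U,\rho^\U)$, and also $(B,\tau)$ is separable with faithful GNS. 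Apply Theorem \ref{freeexactness} to the embeddings $(A,\rho)*(B,\tau)\hookrightarrow (A^\U,\rho^\U)*(B,\tau)$ — wait, we need a single ultraproduct on the right. Instead apply Theorem \ref{freeexactness} with $B_1 = A*B$, $B_2 = C_r^*(\F_\infty)$, using the separable embedding $(A*B,\rho*\tau)\hookrightarrow \prod_\U\bigl((A*A)\ast? \bigr)$ obtained from selflessness of $A$ tensored (freely) with $\mathrm{id}_B$ via Corollary \ref{freeexistential}: concretely, $(A,\rho)\hookrightarrow \prod_\U (A,\rho) = (A^\U,\rho^\U)$ is existential and $(A,\rho)*(B,\tau)\hookrightarrow (A^\U,\rho^\U)*(B,\tau)$ is existential by Corollary \ref{freeexistential}, so regard $A*B$ as sitting inside $A^\U * B$; and $C_r^*(\F_\infty)$ sits inside $A^\U$. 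Now Theorem \ref{freeexactness} (with $A_1^{(n)} = A*B$ for all $n$ — no: the point of \ref{freeexactness} is varying $n$). Here is the honest version: by selflessness and Theorem \ref{selflessunitary}(vi), the embedding $(A,\rho)\hookrightarrow \freeprod_{i=1}^\infty(A,\rho)$ is existential; free-multiply by $\mathrm{id}_B$ (Corollary \ref{freeexistential}) to get that $(A,\rho)*(B,\tau)\hookrightarrow \bigl(\freeprod_{i=1}^\infty(A,\rho)\bigr)*(B,\tau)$ is existential. But $\bigl(\freeprod_{i=1}^\infty(A,\rho)\bigr)*(B,\tau)\cong (A,\rho)*(B,\tau)*\freeprod_{i=2}^\infty(A,\rho)$, and since $\freeprod_{i=2}^\infty(A,\rho)$ contains a copy of $C(\T)$, hence of $C_r^*(\F_\infty)$ (by Corollary \ref{corofinftyoinfty} reasoning, or rather $\freeprod_{i=1}^\infty(A,\rho)$ itself contains $C(\T)$ and is isomorphic to its own free square, so it contains $C_r^*(\F_\infty)*\bigl(\freeprod_{i=1}^\infty(A,\rho)\bigr)$), one factors the first-factor embedding $(A*B)\hookrightarrow (A*B)*C_r^*(\F_\infty)$ through the existential embedding above, establishing condition (v) of Theorem \ref{selflessunitary} for $(A,\rho)*(B,\tau)$, hence selflessness.

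The main obstacle is purely bookkeeping: correctly threading the isomorphisms of reduced free products (associativity, and $\freeprod_{i=1}^\infty(A,\rho)$ being isomorphic to $(A,\rho)*\freeprod_{i=1}^\infty(A,\rho)$ and to $C_r^*(\F_\infty)*\freeprod_{i=1}^\infty(A,\rho)$) so that all the relevant triangles of embeddings commute and so that the extra copy of $B$, and the extra copies of $A$, land inside genuinely free factors of the target. There is no new analytic input beyond Lemma \ref{superconvergence}/the existence of Haar unitaries, Corollary \ref{freeexistential} (Skoufranis--Pisier free exactness), and \cite[Lemma 4.1]{dykema-sh}; all the work is in organizing these so that selflessness of $A$ propagates to $A*B$ via characterization (v) or (vi) of Theorem \ref{selflessunitary}.
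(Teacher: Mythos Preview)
Your core idea is correct and matches the paper's: reduce to separable $A$ via elementary submodels, then free-multiply an existential embedding witnessing selflessness of $(A,\rho)$ by $\mathrm{id}_B$ using Corollary~\ref{freeexistential}, and read off one of the characterizations in Theorem~\ref{selflessunitary} for $(A,\rho)*(B,\tau)$. Where you diverge is in the choice of characterization. You work with (vi), obtaining that
\[
(A,\rho)*(B,\tau)\hookrightarrow \Big(\freeprod_{i=1}^\infty(A,\rho)\Big)*(B,\tau)\cong (A,\rho)*(B,\tau)*\freeprod_{i=2}^\infty(A,\rho)
\]
is existential, and then spend effort trying to extract a copy of $C_r^*(\F_\infty)$ from the right-hand factor to verify (v). That last step is unnecessary: what you have written down is already condition (iii) of Theorem~\ref{selflessunitary} for $(A*B,\rho*\tau)$, with $C=\freeprod_{i=2}^\infty(A,\rho)\neq\C$, so you are done immediately.

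The paper's route is shorter still: it uses characterization (iv) instead of (vi). Since $(A,\rho)\hookrightarrow (A,\rho)*(C(\T),\lambda)$ is existential, freely multiplying by $\mathrm{id}_B$ (Corollary~\ref{freeexistential}) gives that
\[
(A,\rho)*(B,\tau)\hookrightarrow (A,\rho)*(B,\tau)*(C(\T),\lambda)
\]
is existential, which is literally condition (iv) for $(A*B,\rho*\tau)$. No reshuffling of free factors, no Haar-unitary absorption argument, no appeal to Lemma~\ref{superconvergence} or \cite{dykema-sh} is needed. The non-separable case is then handled by Theorem~\ref{directlimits} applied to the directed family $\{A'*B\}$ as $A'$ ranges over separable elementary submodels of $A$ (equivalent to your Lemma~\ref{unusedsymbol} approach). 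Your proposal would benefit greatly from pruning the false starts and recognizing that characterization (iv) is ``self-similar'' under free products in exactly the way you need.
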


\begin{proof}
Assume first that $A$ is separable. Consider the existential embedding 
\[
(A,\rho) \to (A, \rho) \ast (C(\mathbb{T}), \lambda).
\]
By taking the reduced free product with $(B, \tau)$, we obtain an embedding 
\[
(A, \rho) \ast (B, \tau) \to (A, \rho) \ast (B, \tau) \ast (C(\mathbb{T}), \lambda),
\]
which is  existential by Corollary \ref{freeexistential}. Clearly, $A*B\neq \C$ and the reduced free product state
$\rho*\tau$ has faithful GNS representation. Thus,  $(A, \rho) \ast (B, \tau)$ is selfless, by Theorem \ref{selflessunitary}.

Let us drop the assumption that $A$ is separable. Regard $(A,\rho)$ as the direct limit of its separable elementary 
submodels. Each such  $(A',\rho')$ is selfless (by Lemma \ref{subselfless}). Hence  $A' \ast B$  is selfless as well. The C*-algebras
$A'*B$ form a directed system of selfless C*-algebras (relative to the restriction of $\rho \ast \tau$ to $A' \ast B$) with 
limit   $A \ast B$. It follows that $A \ast B$ is selfless, by Theorem \ref{selflessunitary}.
\end{proof}

Let us recall the definition of semicircular and circular elements of radius 1: 
A selfadjoint element $s\in A$ is called semicircular (of radius 1) if $s=\frac12(u+u^*)$, where $u$ is a Haar unitary (relative to a state $\tau$).  An element $x\in A$ is called circular (of radius 1) if $x=\sqrt 2 a+i\sqrt 2 b$, where $a,b$ are semicircular and freely independent.

\begin{theorem}\label{matrices}
Let $(A,\rho)$ be a selfless C*-probability space, with $\rho$ a faithful state. Then $(M_n(A),\rho\otimes \mathrm{tr}_n)$ is selfless for all $n\in \N$
(where $\mathrm{tr}_n$ denotes the normalized trace on $M_n(\C)$).
\end{theorem}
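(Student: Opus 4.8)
The plan is to verify the criterion of Theorem~\ref{selflessunitary}(v) for the pair $(M_n(A),\rho\otimes\mathrm{tr}_n)$ (which is legitimate: $M_n(A)\neq\C$, and $\rho\otimes\mathrm{tr}_n$ is faithful because $\rho$ is): namely, that the first-factor embedding
\[
j\colon (M_n(A),\rho\otimes\mathrm{tr}_n)\ \hookrightarrow\ (M_n(A),\rho\otimes\mathrm{tr}_n)\ast(C_r^*(\F_\infty),\tau)
\]
is existential. As in the proofs of Theorems~\ref{selflessunitary} and~\ref{selflessprod}, by writing $(A,\rho)$ as the direct limit of its separable elementary submodels $(A',\rho')$ --- each selfless by Lemma~\ref{subselfless} and carrying a faithful state --- and applying Theorem~\ref{directlimits} to the directed family of the $\bigl(M_n(A'),\rho'\otimes\mathrm{tr}_n\bigr)$, we may and do assume that $A$ is separable.

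The first step is to pass to a corner of $G:=M_n(A)\ast C_r^*(\F_\infty)$. Put $\sigma=(\rho\otimes\mathrm{tr}_n)\ast\tau$, let $e_{11}\in M_n(\C)\subseteq M_n(A)\subseteq G$ be a rank-one matrix unit (so $\sigma(e_{11})=1/n$ and $e_{11}M_n(A)e_{11}=\C e_{11}\otimes A\cong A$), and let $E\colon G\to M_n(A)$ be the $\sigma$-preserving conditional expectation afforded by the reduced free product. Being an $M_n(A)$-bimodule map, $E$ carries $H:=e_{11}Ge_{11}$ into $e_{11}M_n(A)e_{11}=\C e_{11}\otimes A$, and from this one computes directly that $\sigma(e_{i1}xe_{1j})=\delta_{ij}\,\sigma(x)$ for all $x\in H$ and all $i,j$. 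This identity is exactly the compatibility that makes the standard corner isomorphism state-preserving: writing $\psi_H:=n\,\sigma|_H$, one gets $(G,\sigma)\cong(M_n(H),\psi_H\otimes\mathrm{tr}_n)$, and a short check shows that under this isomorphism $j$ corresponds to $M_n(\iota)$, where $\iota\colon(A,\rho)\hookrightarrow(H,\psi_H)$ is the (state-preserving) embedding $a\mapsto e_{11}\otimes a$. Since matrix amplification takes existential embeddings to existential embeddings --- given an ultrapower splitting $(H,\psi_H)\to(A^{\U},\rho^{\U})$ of $\iota$, amplify it by $M_n(-)$ and use $M_n(A^{\U})=M_n(A)^{\U}$ --- it suffices to prove that $\iota$ is existential.

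The second step is to recognize $(H,\psi_H)$ as a reduced free product. Compressing $M_n(A)\ast C_r^*(\F_\infty)$ by the projection $e_{11}$, which lies in the first factor and has trace $1/n$, Dykema's structure theory for corners of reduced free products --- and, in the non-tracial case, where $A$ is purely infinite by Theorem~\ref{dichotomy}, the corresponding results of Dykema and R{\o}rdam --- produces a reduced free product decomposition
\[
(H,\psi_H)\ \cong\ (A,\rho)\ast(D,\mu),
\]
in which $\iota$ is the first-factor embedding and $(D,\mu)$ is a separable C*-probability space with faithful GNS representation, generated by a free semicircular/circular family; the free-dimension count leaves the second factor isomorphic to $(C_r^*(\F_\infty),\tau)$ (or in any case to a reduced free product of copies of $(C(\T),\lambda)$), so $(D,\mu)$ embeds state-preservingly into $(C_r^*(\F_\infty),\tau)$. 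Now selflessness of $(A,\rho)$ together with Theorem~\ref{selflessunitary}(v) gives an ultrafilter $\U$ and an embedding $s\colon (A,\rho)\ast(C_r^*(\F_\infty),\tau)\hookrightarrow(A^{\U},\rho^{\U})$ restricting to the diagonal embedding on $A$; composing $s$ with the embedding $(A,\rho)\ast(D,\mu)\hookrightarrow(A,\rho)\ast(C_r^*(\F_\infty),\tau)$ furnished by Theorem~\ref{thm-blanchard-dykema} (from $\mathrm{id}_A$ and $(D,\mu)\hookrightarrow(C_r^*(\F_\infty),\tau)$) yields an embedding of $(A,\rho)\ast(D,\mu)$ into $(A^{\U},\rho^{\U})$ that splits $\iota$. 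Hence $\iota$, and therefore $j=M_n(\iota)$, is existential, and $(M_n(A),\rho\otimes\mathrm{tr}_n)$ is selfless by Theorem~\ref{selflessunitary}.

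The step I expect to be the main obstacle is the corner identification $(H,\psi_H)\cong(A,\rho)\ast(D,\mu)$: one must invoke the correct form of Dykema's (and, for non-tracial $\rho$, Dykema--R{\o}rdam's) description of a compression of a reduced free product, and verify that the free complement $(D,\mu)$ has a faithful GNS representation and embeds state-preservingly into $(C_r^*(\F_\infty),\tau)$ --- equivalently, that $\mu$ is a suitable trace. The bimodule/conditional-expectation computation of the first step and the permanence of existentiality under matrix amplification are routine.
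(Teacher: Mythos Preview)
Your reduction to the separable case and the corner computation in the first step are fine, but the second step---the identification $(H,\psi_H)\cong(A,\rho)\ast(D,\mu)$ with $(D,\mu)$ embedding into $(C_r^*(\F_\infty),\tau)$---is a genuine gap, and you have correctly identified it as such. Dykema's compression formulas are worked out for specific free factors (matrix algebras, $L(\F_n)$, finite-dimensional abelian algebras), typically at the von Neumann level; the free-dimension heuristic you invoke is a von Neumann invariant and does not transfer to reduced C*-free products with an arbitrary first factor $(A,\rho)$. Nor do the Dykema--R\o rdam results you allude to furnish a corner decomposition of $M_n(A)\ast C_r^*(\F_\infty)$ in the non-tracial case. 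Even if one could show that $A$ and some complement are free inside $H$, one would still need that together they generate $H$ as a C*-algebra and that the restricted state on the complement has faithful GNS and embeds into $(C_r^*(\F_\infty),\tau)$---none of which is automatic.

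The paper avoids this difficulty by reversing the order of operations: rather than forming $M_n(A)\ast C_r^*(\F_\infty)$ and compressing, it amplifies the \emph{already known} existential embedding $A\hookrightarrow A\ast C_r^*(\F_\infty)$ to obtain an existential embedding $M_n(A)\hookrightarrow M_n\bigl(A\ast C_r^*(\F_\infty)\bigr)$, and then factors the latter through the first-factor map $M_n(A)\hookrightarrow M_n(A)\ast C(\T)$. The factorization comes from \cite[Theorem~2.1]{dykema-rordamGAFA}: assembling free semicircular and circular elements from the $C_r^*(\F_\infty)$ factor into an $n\times n$ matrix produces a semicircular element of $M_n\bigl(A\ast C_r^*(\F_\infty)\bigr)$ that is free from $M_n(A)$; since the ambient state $(\rho\ast\tau)\otimes\mathrm{tr}_n$ is faithful, \cite[Lemma~1.3]{dykema-rordamGAFA} then yields the desired embedding of $M_n(A)\ast C(\T)$. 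Thus the paper never needs to understand the corner of $M_n(A)\ast C_r^*(\F_\infty)$ at all; the Dykema--R\o rdam matrix model does exactly the work your compression step was meant to do, but in the ``easy'' direction.
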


\begin{proof}
Let $\theta$ denote the first factor  embedding of $(A,\rho)$ into
$(A,\rho)*(C_r^*(\F_\infty),\tau)$. 
 Since $(A,\rho)$ is selfless,  there exists an embedding 
 $\sigma\colon A*C_r^*(\F_\infty)\hookrightarrow A^{\U}$ 
such that $\sigma\theta$ agrees with the diagonal embedding of $A$ in $A^\U$ (Theorem \ref{selflessunitary}). Tensoring with $M_n(\C)$ and identifying $A\otimes M_n(\C)$ and $A^\U\otimes M_n(\C)$ with $M_n(A)$ and $M_n(A)^\U$, respectively, 
we obtain that the embedding of   $M_n(A)$ in $M_n(A*C_r^*(\F_\infty))$ is existential:
\[
\begin{tikzcd}
	(M_n(A),\rho\otimes \mathrm{tr}_n) \ar[r, hook, "\iota"] 
	\ar[rd, hook] &  (M_n(A)^{\mathcal{U}},(\rho\otimes \mathrm{tr}_n)^{\mathcal{U}}) \\
	& (M_n(A * C_r^*(\mathbb{F}_{\infty})), (\rho * \tau)\otimes \mathrm{tr}_n) \arrow[u, hook] 
\end{tikzcd}
\]
(What we have shown is that
augmenting an existential embedding of C*-probability spaces to $n\times n$ matrices  results in an existential embedding.)

Let us show  that the embedding $M_n(A)\hookrightarrow M_n(A*C_r^*(\F_\infty))$
can be  factored through the first factor embedding of $(M_n(A),\rho\otimes \mathrm{tr}_n)$  
in $(M_n(A),\rho\otimes \mathrm{tr}_n)*(C(\T),\lambda)$. The argument is inspired by the proof of
\cite[Proposition 3.3]{dykema-rordamGAFA}.

Choose in $C_r^*(\F_\infty)$ a collection of freely independent elements $(x_{ij})_{ij=1}^n$, with $1\leq i\leq j\leq n$,  such that $x_{ii}$ is semicircular for all $i$ and $x_{ij}$ is circular for all $i<j$. This is clearly possible from the availability of infinitely many freely independent Haar unitaries in $C_r^*(\F_\infty)$. Set
\[
x=\frac{1}{\sqrt n}\Big(\sum_{i=1}^n x_{ii}\otimes e_{ii} + \sum_{1\leq i<j\leq n} x_{ij}\otimes e_{ij} + x_{ij}^*\otimes e_{ji}\Big).
\]
By \cite[Theorem 2.1]{dykema-rordamGAFA}, $x$ is a semicircular element and $M_n(A)$ and $C^*(x,1)$ are freely independent C*-subalgebras  of $M_n(A*C_r^*(\F_\infty))$ relative to  $(\rho*\tau)\otimes \mathrm{tr}_n$ in  $M_n(A*C_r^*(\F_\infty))$. Using functional calculus, we obtain a Haar unitary  $u\in C^*(x,1)$. We thus have freely independent embeddings of  $M_n(A)$ and $C(\T)$ into  $M_n(A*C_r^*(\F_\infty))$. Since 
$(\rho*\tau)\otimes \mathrm{tr}_n$ is a faithful state \cite{dykema-faithful}, there exists  (by \cite[Lemma 1.3]{dykema-rordamGAFA}) an embedding 
\[
\sigma'\colon M_n(A)*C(\T)\to M_n(A*C_r^*(\F_\infty))
\] 
mapping $M_n(A)$ to $M_n(A)$. Thus, the embedding of $M_n(A)$ in $M_n(A*C_r^*(\mathbb F_\infty))$ factors through the first factor embedding of 
$M_n(A)$ in $M_n(A)*C(\T)$. Since, as shown above, the former embedding is existential, so is the latter. Thus $(M_n(A),\rho\otimes \mathrm{tr}_n)$ is selfless by Theorem \ref{selflessunitary}.
\end{proof}

\begin{theorem}
Let $(A,\tau)$ be selfless, with $\tau$ a faithful trace. Let $p\in A$ be a nonzero projection. Then 
$(pAp,\frac1{\tau(p)}\tau)$ is selfless. 
\end{theorem}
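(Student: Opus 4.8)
The plan is to reduce the corner $(pAp, \tfrac1{\tau(p)}\tau)$ to a matrix amplification of a smaller corner, and then invoke Theorem \ref{matrices}. Since $A$ is selfless with a faithful trace, Theorem \ref{dichotomy} tells us $A$ has stable rank one and strict comparison by $\tau$; moreover $A$ is simple with unique trace. In particular the projection $p$ generates a full hereditary subalgebra, so it is ``comparison-visible.'' The key structural input is that in a selfless tracial $(A,\tau)$, after passing to $M_n(A)$ for suitable $n$, one can find projections of trace $\tfrac1n$, and more relevantly, one can find within $M_n(A)$ a system of matrix units witnessing that $p$ (viewed in the right corner) is Murray–von Neumann equivalent to a multiple of a smaller projection. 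So the first step is: use strict comparison together with the available Haar unitaries/semicircular elements in the ultrapower (as in the proof of Theorem \ref{matrices}) to produce, for appropriate $n$ and $k$, a projection $q \le p$ with $\tau(q) = \tfrac{k}{n}\tau(p)$ for a dense set of rationals, and matrix units identifying $pAp$ with a corner of a matrix algebra over $qAq$.

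The cleanest route, however, avoids arbitrary corners and instead exploits the following: if $p$ and $p'$ are projections in $A$ with $\tau(p) = \tau(p')$, then since $A$ has stable rank one and strict comparison, $p \sim p'$, hence $pAp \cong p'Ap'$ as C*-probability spaces (with the renormalized traces). So it suffices to treat projections $p$ whose trace is of a convenient form. Next, I would show that the set of $t \in (0,1]$ for which $(pAp, \tfrac1t \tau)$ is selfless (where $p$ ranges over projections of trace $t$) is closed under the operations: if $t$ works and $\tfrac{s}{t} = \tfrac1m$ for an integer $m$, then $s$ works — this is because $qAq$ for $q \le p$ with $\tau(q) = \tfrac1m \tau(p)$ satisfies $pAp \cong M_m(qAq)$ up to the trace normalization (one can build the matrix units since $qAq$ contains $m$ orthogonal equivalent subprojections summing to $p$, using strict comparison and stable rank one in $A$), and then Theorem \ref{matrices} applied to $(qAq, \tfrac1{\tau(q)}\tau)$ — which is selfless by this same argument one level down, or directly — gives that $M_m(qAq)$ is selfless. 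Running this with $t = 1$ (so $(A,\tau)$ itself, which is selfless by hypothesis), we get that $(pAp, \tfrac1{1/m}\tau) = (pAp, m\tau)$ is selfless whenever $\tau(p) = \tfrac1m$. Combined with the $\tau(p) = \tau(p') \Rightarrow pAp \cong p'Ap'$ observation, this handles all $p$ with rational trace having numerator $1$.

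To reach an arbitrary projection $p$, I would approximate: write $p = p_1 + \dots + p_r$ with the $p_i$ orthogonal of equal trace $\tfrac1m$ (possible for $m$ large when $\tau(p) = \tfrac{r}{m}$ is rational — and every projection in a selfless tracial $A$ has rational trace? No — so one must handle irrational traces too). For irrational $\tau(p)$, the corner $pAp$ is still a direct limit argument away: actually the cleanest finish is to observe that $M_r(qAq)$ with $\tau(q) = \tfrac1m$ has a projection of trace $\tfrac{r}{m}$ corresponding to $1_r \otimes q$ cut down, and $pAp$ is isomorphic to the corresponding corner $(\sum_{i\le r} p_i) A (\sum_{i \le r} p_i)$; since $pAp$ embeds as a full corner in $M_m(A)$-type algebras built from selfless pieces, and selflessness transfers through... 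Here I expect the main obstacle: there is no a-priori reason corners of selfless algebras are selfless in a way that passes through existential embeddings cleanly, because a corner $pAp$ does not obviously sit existentially inside $A$. The resolution should mirror Theorem \ref{matrices}: one directly produces, in the reduced free product $pAp * C_r^*(\F_\infty)$ (with $C_r^*(\F_\infty)$ playing the role of the ambient algebra of semicircular/circular elements), a copy of the ``cut-down'' semicircular matrix and a Haar unitary freely independent from $pAp$, by transporting the construction from $A * C_r^*(\F_\infty)$ through the equivalence $p \sim q^{\oplus m}$ and the faithful-state free-product machinery of Dykema. Then Theorem \ref{selflessunitary}(iv) finishes it. So the hard part is making the transport of the Dykema–R\o{}rdam matrix construction compatible with the corner, i.e., checking that cutting down by $p$ preserves the free independence and faithfulness needed; granting Theorems \ref{matrices} and \ref{selflessunitary}, and strict comparison plus stable rank one from Theorem \ref{dichotomy}, the rest is bookkeeping about matrix units and trace values.
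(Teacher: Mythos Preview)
Your approach has a genuine gap that you partially recognize but do not resolve. The core problem is that Theorem~\ref{matrices} goes in the wrong direction for your purposes: it shows that if $(C,\kappa)$ is selfless then $M_m(C)$ is selfless, not the converse. When you write ``Theorem~\ref{matrices} applied to $(qAq, \tfrac{1}{\tau(q)}\tau)$ --- which is selfless by this same argument one level down'', you are assuming exactly what you want to prove. Even in the base case $\tau(p)=1/m$, the identification $A\cong M_m(pAp)$ only lets you deduce that $M_m(pAp)$ is selfless from $pAp$ being selfless, not the other way around. So the whole inductive scheme on trace values never gets started, and the discussion of rational versus irrational $\tau(p)$ is moot.

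The paper's argument bypasses matrix amplification entirely and is much more direct. Take the first-factor embedding $\theta\colon A\hookrightarrow A*B$ with $B=\freeprod_{i=1}^\infty(A,\tau)$; since $(A,\tau)$ is selfless, $\theta$ is existential, witnessed by some $\sigma\colon A*B\to A^\U$ with $\sigma\theta=\iota$. The crucial observation is that $\sigma$ sends $\theta(p)$ to $p$, hence maps the corner $\theta(p)(A*B)\theta(p)$ into $p(A^\U)p=(pAp)^\U$. This immediately shows that $\theta|_{pAp}\colon pAp\hookrightarrow p(A*B)p$ is existential --- so corners of existential embeddings are existential, which is the step you flagged as the main obstacle. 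Then one invokes Nica--Speicher \cite[Application 1.13]{nica-speicher}: in a reduced free product with faithful trace, the corners $pAp$ and $pBp$ are freely independent in $p(A*B)p$. This factors the existential map through the first-factor embedding $pAp\hookrightarrow (pAp)*(pBp)$, and since $pBp\neq\C$, Theorem~\ref{selflessunitary}(iii) finishes. No strict comparison, stable rank one, or trace-value arithmetic is needed.
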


\begin{proof}
Set $(B,\bar \tau)=\freeprod_{i=1}^\infty (A,\tau)$ and denote by $\theta\colon A\to A*B$ the first factor embedding. By assumption, there exists an embedding $\sigma\colon A*B\to A^\U$ such that $\sigma\theta$ agrees with the diagonal embedding of $A$ in $A^\U$. Notice that $\sigma$ maps $\theta(p)\in A*B$ to $p\in A\subseteq A^\U$. Thus, $\sigma$ maps $\theta(p)(A*B)\theta(p)$ into $p(A^\U)p=(pAp)^\U$. It  follows that $\theta|_{pAp}\colon pAp\to \theta(p)(A*B)\theta(p)$ is an existential embedding (cf. \cite[Theorem 2.8]{barlak-szabo}). 

To simplify notation, let us denote $\theta(p)$ simply by $p$, regarded now as an element in $A*B$. 
From Nica and Speicher's \cite[Application 1.13]{nica-speicher}, we know that $pAp$ and $pBp$ are freely independent unital C*-subalgebras of $p(A*B)p$ relative to $\bar\tau_p:=\frac{1}{\tau(p)}(\tau*\bar\tau)$ (the free product trace normalized at $p$). 
Since $\bar\tau_p$ is a faithful trace on $p(A*B)p$, there exists,  by \cite[Lemma 1.3]{dykema-rordamGAFA}, an embedding
\[
(pAp,\frac{1}{\tau(p)}\tau)*(pBp,\bar\tau_p|_{pBp})\stackrel{\sigma'}{\longrightarrow} (p(A*B)p,\bar\tau_p)
\] 
such that $\theta|_{pAp}=\sigma'\theta'$, where $\theta'\colon pAp\to (pAp)*(pBp)$ denotes the first factor embedding of $pAp$ into $(pAp)*(pBp)$. Since, as remarked above, $\theta|_{pAp}$ is existential, $\theta'$ is existential as well. Note that  $pBp\neq \C$, since $B$ is simple and non-elementary. Thus, $(pAp,\frac1{\tau(p)}\tau)$ is selfless, by Theorem \ref{selflessunitary}.
\end{proof}

The class of selfless C*-probability spaces is in general not closed under ultrapowers. 
Indeed, if $(A,\tau)$ is selfless and tracial, then the trace-kernel ideal of 
$\tau^{\mathcal U}$ is a non-trivial closed two-sided ideal in 
$A^{\mathcal U}$ (recall that $\dim A=\infty$). Thus, $A^{\mathcal U}$ is non-simple, whence also not selfless.
However, we do have the following:

\begin{theorem}
The class of selfless C*-probability spaces $(A,\rho)$ where $A$ is a simple purely infinite C*-algebra is $\forall\exists$-axiomatizable.
\end{theorem}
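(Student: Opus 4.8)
The plan is to exhibit, for a suitable pair $(N,\gamma)$, a single $\forall\exists$-axiomatization of the conjunction of three properties: (a) $(A,\rho)$ is a C*-probability space in which $\rho$ has faithful GNS representation; (b) $A$ is simple and purely infinite; and (c) $(A,\rho)$ is selfless. Properties (a)--(b) are already known to be $\forall\exists$-axiomatizable: faithfulness of the GNS representation of $\rho$ can be captured by a $\forall\exists$-sentence asserting that for every positive $a$ in the unit ball with $\|a\|\ge\epsilon$ there is a contraction $x$ with $\rho(xax^*)\ge\delta(\epsilon)$, and being simple purely infinite is $\forall\exists$-axiomatizable by \cite[3.13.7]{Cstarmodel}. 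So the real content is to show that, \emph{within the class of simple purely infinite C*-probability spaces}, selflessness is $\forall\exists$-axiomatizable.

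The key idea is to replace the existential embedding $\theta\colon(A,\rho)\hookrightarrow(A,\rho)*(C(\T),\lambda)$ of Theorem~\ref{selflessunitary}(iv) by an \emph{approximate, local} statement that only quantifies over finitely many elements of $A$ at a time and only references the free product data through finitely many moments and norms of $*$-polynomials in a Haar unitary freely independent from the chosen finite tuple. Concretely, $(A,\rho)$ is selfless iff: for every $\epsilon>0$, every $k$, every tuple $\bar a=(a_1,\dots,a_k)$ in the unit ball of $A$, and every quantifier-free formula $\Phi$ of bounded complexity, there exist elements of $A$ that simulate, to within $\epsilon$, a Haar unitary $u$ with $\rho(u^n)=0$ freely independent from $\bar a$ and witness the relevant infimum. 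The infinitely many finitary requirements "there is an element behaving like a free Haar unitary relative to this finite tuple, up to $\epsilon$" each have the form $\sup_{\bar x}\inf_{\bar y}(\cdots)$, i.e.\ are $\forall\exists$. The subtle point is the faithfulness clause (2) in the Remark following Theorem~\ref{selflessunitary}: one needs the limit state $\rho^\U$ restricted to $C^*(A,u)$ to still have faithful GNS. This is exactly where purely infinite simplicity is used: in a simple purely infinite C*-algebra, and in its reduced free products with $C(\T)$, the natural state automatically has faithful GNS (indeed the free product $A*C(\T)$ is again simple purely infinite by Dykema--R\o rdam, and simple C*-algebras carry only faithful states), so the faithfulness clause is automatic and need not be axiomatized separately. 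This removes the one obstruction that prevents selflessness from being first-order in general.

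The main obstacle I anticipate is making the passage from the clean ultrapower formulation "there is a Haar unitary in $A^\U$ freely independent from $A$ with $C^*(A,u)$-restriction having faithful GNS, and it witnesses existentiality" to a genuinely $\forall\exists$ list of sentences — in particular, verifying that the existential witnesses produced by selflessness (Haar unitary plus the $\bar y$ realizing each $\inf_{\bar y}\Phi$) can be found \emph{inside the unit ball of a fixed finite-complexity fragment}, so that the whole requirement is a countable conjunction of sentences each of the syntactic shape $\sup\inf$. One then argues the converse direction — that any model of all these sentences is selfless — by running the Łoś-theorem/ultrapower characterization of Theorem~\ref{selflessunitary}(iv) in reverse: the finitary witnesses assemble, along a nonprincipal ultrafilter, into an honest free Haar unitary in $A^\U$, and purely infinite simplicity upgrades the resulting state to one with faithful GNS, so $(A,\rho)\hookrightarrow(A,\rho)*(C(\T),\lambda)$ is existential. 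I would also invoke Lemma~\ref{subselfless} and Theorem~\ref{dichotomy}(i) to keep the class of simple purely infinite selfless C*-probability spaces stable under the elementary-submodel and ultraproduct manipulations that an axiomatizability proof implicitly requires, and cite \cite[Lemma 7.2.2]{Cstarmodel} and \cite[3.13.7]{Cstarmodel} for the component axiomatizability facts. The uniform Dixmier property (Theorem~\ref{dichotomy}) with the explicit $(5,2/\sqrt5)$ can be thrown into the axiom list as well, since it is $\forall\exists$-axiomatizable and holds for all selfless $(A,\rho)$; this is convenient for pinning down simplicity uniformly across the class.
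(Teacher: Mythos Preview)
Your approach is genuinely different from the paper's, and it has a real gap.

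The paper does \emph{not} attempt to write down explicit $\forall\exists$ axioms. Instead it invokes the semantic characterization of $\forall\exists$-axiomatizability (\cite[Proposition 2.4.4]{Cstarmodel}): it checks that the class is closed under elementary submodels (via Lemma~\ref{subselfless}), under direct limits (via Theorem~\ref{directlimits}), and under ultraproducts. The ultraproduct case is the substantive one, and the key tool there is free exactness (Theorem~\ref{freeexactness}): for a separable elementary submodel $B\subseteq\prod_\U A_i$ one gets an honest embedding $B*C(\T)\hookrightarrow\prod_\U(A_i*C(\T))$, and then the ultraproduct of the existential embeddings $A_i\hookrightarrow A_i*C(\T)$ (Lemma~\ref{ultraexist}) finishes the job. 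Simplicity of the ultraproduct is used only to guarantee that $\rho_\U$ and its restriction to $B$ have faithful GNS, so that free products are available.

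The gap in your approach is precisely at the point you flag as ``the subtle point'', and your proposed resolution of it does not work. You argue that in the purely infinite simple setting the faithfulness clause (2) of the Remark following Theorem~\ref{selflessunitary} is automatic, because ``$A*C(\T)$ is again simple purely infinite \dots\ and simple C*-algebras carry only faithful states''. But clause (2) is a statement about $\rho^{\U}$ restricted to the subalgebra $C^*(A,u)\subseteq A^{\U}$, \emph{not} about the free product state on $A*C(\T)$. Simplicity of $A^{\U}$ guarantees that $\rho^{\U}$ has faithful GNS on $A^{\U}$, but says nothing about its restriction to an arbitrary subalgebra: there is no reason the witnesses $x$ with $\rho^{\U}(x c x^*)>0$ should lie in $C^*(A,u)$. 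Consequently, your proposed axioms --- which encode only the moment condition ``$u$ is approximately Haar and free from $\bar a$'' --- capture condition (1) but not condition (2). A model of those axioms yields a Haar unitary $u\in A^{\U}$ free from $A$ in moments, but there is no mechanism forcing the norms $\|p(\bar a,u)\|_{A^{\U}}$ to match $\|p(\bar a,u)\|_{A*C(\T)}$; without this strong convergence, the $*$-homomorphism from the algebraic free product need not factor through the reduced free product, so you do not obtain the embedding $A*C(\T)\hookrightarrow A^{\U}$ required for existentiality. This moment-versus-norm gap is exactly what free exactness (Skoufranis/Pisier) bridges in the paper's argument, and it cannot simply be wished away. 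Nor can the free-product norms be inserted into your axioms directly, since $\|p(\bar a,u)\|_{A*C(\T)}$ is not a formula in the language of $(A,\rho)$.
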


\begin{proof}
We will show that the class in question is closed under direct limits, elementary submodels, and ultraproducts over the natural numbers, which yields the statement of the theorem, by \cite[Proposition 2.4.4]{Cstarmodel}. Since the class of simple purely infinite unital C*-algebras is $\forall\exists$-axiomatizable \cite[3.13.7]{Cstarmodel}, the property of being simple and purely infinite is preserved under these constructions. It remains check that the same holds for being selfless.

If $(A,\rho)$ is an elementary submodel of a selfless $(B,\tau)$, then the inclusion map is existential, so $(A,\rho)$ is selfless by Lemma \ref{subselfless}.

Suppose we have a direct system of selfless C*-probability spaces $(A_i,\rho_i)_i$, $i\in I$, where each $A_i$ is simple and purely infinite.
The maps between them must be embeddings, as the C*-algebras are simple. It follows by Theorem \ref{directlimits} that their direct limit $(A,\rho)$ is selfless.

Consider an ultraproduct $(A,\rho)=\prod_\U (A_i,\rho_i)$, where $\U$ is an ultrafilter on the natural numbers,  
each $(A_i,\rho_i)$ is selfless, and  each $A_i$ is simple and purely infinite. As remarked above, it follows that $A$ is again simple
and purely infinite.  In particular, the GNS representation induced by $\rho$ has trivial kernel, i.e., it is faithful. 
Let $(B,\rho|_B)$ be a separable elementary submodel of $(A,\rho)$.
Note that $B$ is again simple, and so $\rho|_B$ has faithful GNS representation as well. It will suffice to show that $(B,\rho|_B)$ is selfless,
as we can then pass to the direct limit over all separable elementary submodels of $(A,\rho)$ to reach the same conclusion for $(A,\rho)$.

By Theorem \ref{freeexactness}, we have an embedding 
\[
\pi\colon (B,\rho|_B)*(C(\T),\lambda)\hookrightarrow \prod_\U (A_i,\rho_i)*(C(\T),\lambda)
\]
such that the following diagram commutes:
\[
\begin{tikzcd}
\prod_\U A_i \ar[r, hook] & \prod_\U (A_i*C(\T))\\
B \ar[u,hook]\ar[r, hook] & B*C(\T)\ar[u, hook, "\pi"]
\end{tikzcd}.
\]
The embedding of $B$ into $\prod_\U A_i$ is existential, since it is elementary.
The top horizontal arrow is obtained as the ultraproduct of the existential embeddings $A_i\hookrightarrow A_i*C(\T)$. Thus, it is also existential (Lemma \ref{ultraexist}). It follows that the embedding of $B$ into $B*C(\T)$ is existential, as desired.

\end{proof}



%

\section{Examples} \label{examples}
By a Kirchberg algebra we understand a separable, nuclear,  simple, purely infinite C*-algebra.

\begin{theorem}\label{selflesskirchberg} 
	Let $A$ be a unital Kirchberg algebra in the UCT class and let $\rho$ be a pure state on $A$. Then $(A,\rho)$ is selfless.
\end{theorem}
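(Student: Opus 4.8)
The plan is to establish condition (iii) of Theorem~\ref{selflessunitary}, which by that theorem is equivalent to selflessness of $(A,\rho)$. Since $A$ is a Kirchberg algebra it is infinite dimensional and simple, so $A\neq\C$ and the pure state $\rho$ induces a faithful GNS representation; thus Theorem~\ref{selflessunitary} applies, and it suffices to produce a single C*-probability space $(C,\kappa)$ with $C\neq\C$ and $\kappa$ of faithful GNS for which the first factor embedding $(A,\rho)\hookrightarrow(A,\rho)*(C,\kappa)$ is existential. I will take $(C,\kappa)=(\mathcal O_\infty,\phi)$ of Corollary~\ref{corofinftyoinfty} ($R:=(A,\rho)*(\mathcal O_\infty,\phi)$ is then itself selfless by Theorem~\ref{selflessprod}, since $(\mathcal O_\infty,\phi)$ is selfless and $A$ is separable, so Lemma~\ref{subselfless} would also finish the argument once the embedding is known existential). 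Unwinding the definition, the task reduces to the following local statement: \emph{for every finite tuple $\bar a$ in the unit ball of $A$, every finite tuple $\bar c$ in the unit ball of $R$ --- which we may take to consist of elements of $A$ together with Cuntz generators of the free factor $\mathcal O_\infty$ --- and every $\varepsilon>0$, there is a tuple $\bar y$ in the unit ball of $A$ whose joint distribution with $\bar a$ in $(A,\rho)$ agrees, in norm and in the state, to within $\varepsilon$ with the joint distribution of $\bar c$ and $\bar a$ in $(R,\rho*\phi)$.} Informally, $(A,\rho)$ must approximately contain a copy of $(\mathcal O_\infty,\phi)$ that is freely independent from any prescribed finite subset of $A$.

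To organize the construction I would first normalize the state. By Kirchberg's $\mathcal O_\infty$-absorption theorem $A\cong A\otimes\mathcal O_\infty$, and by the homogeneity of the pure state space of a separable simple unital C*-algebra (Kishimoto-Ozawa-Sakai) any two pure states of $A$ are conjugate by an automorphism, so the selflessness of $(A,\rho)$ does not depend on which pure state we use. Hence I may assume $\rho=\rho_0\otimes\phi$ under a fixed identification $A=A\otimes\mathcal O_\infty$; iterating, and using strong self-absorption of $\mathcal O_\infty$ together with the homogeneity of pure states to identify $(\mathcal O_\infty,\phi)$ with $\bigotimes_{k\ge1}(\mathcal O_\infty,\phi)$, one gets $(A,\rho)\cong(A,\rho)\otimes\bigotimes_{k\ge1}(\mathcal O_\infty,\phi)$. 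In particular $A$ carries infinitely many mutually commuting unital copies of $(\mathcal O_\infty,\phi)$, each commuting with a fixed copy of $A$ and each on which $\rho$ restricts to $\phi$.

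The substance of the proof is then to trade these \emph{commuting} copies for an \emph{approximately free} one. Here I would excise the pure state: since $A$ is purely infinite and simple it has real rank zero, so by Akemann-Anderson-Pedersen, given a finite $F$ in the unit ball of $A$ and $\varepsilon>0$ there is a nonzero projection $p\in A$ with $\|pap-\rho(a)p\|<\varepsilon$ for all $a\in F$. Pure infiniteness lets one cut $p$ into infinitely many pairwise orthogonal, pairwise Murray-von Neumann equivalent subprojections, and the resulting partial isometries, assembled together with the commuting $\mathcal O_\infty$-copies furnished by the normalization step, yield candidate isometries $y_1,\dots,y_m\in A$ with pairwise orthogonal ranges, $\sum_i y_iy_i^*<1$, that compress $F$ essentially through the state $\rho$ and whose $\rho$-moments match those of the Cuntz generators of $\mathcal O_\infty$ under $\phi$. \textbf{The main obstacle is to show that the $y_i$ so constructed are in fact approximately freely independent from $F$ with respect to $\rho$}, i.e.\ that the excision relation $pap\approx\rho(a)p$ together with the Cuntz/Toeplitz relations among the $y_i$ force the mixed moments $\rho\bigl(a_1w_1a_2w_2\cdots\bigr)$ of alternating words, with the $a_j\in F$ centered and the $w_j$ centered words in the $y_i,y_i^*$, to be small. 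This is precisely the point where the classification of Kirchberg algebras enters --- the UCT hypothesis being used, through Kirchberg-Phillips, to supply the required approximate-uniqueness statement for such configurations inside $A$. The argument is modeled on Dykema-R{\o}rdam's analysis of eigenfree C*-probability spaces and of purely infinite reduced free products \cite{dykema-rordamGAFA,dykema-rordamII}, and I expect it to be the technically demanding part of the proof.

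Granting the local statement for all finite data, a routine saturation (equivalently, approximate intertwining) argument in $(A^{\mathcal{U}},\rho^{\mathcal{U}})$, for a nonprincipal ultrafilter $\mathcal{U}$ on $\N$, realizes the existential type of $R=(A,\rho)*(\mathcal O_\infty,\phi)$ over $A$ and thereby produces an embedding $R\hookrightarrow(A^{\mathcal{U}},\rho^{\mathcal{U}})$ extending the diagonal embedding of $A$. Equivalently, the first factor embedding $(A,\rho)\hookrightarrow(A,\rho)*(\mathcal O_\infty,\phi)$ is existential, and Theorem~\ref{selflessunitary} yields that $(A,\rho)$ is selfless.
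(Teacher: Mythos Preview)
Your proposal identifies the right target---showing that the first factor embedding $(A,\rho)\hookrightarrow(A,\rho)*(\mathcal O_\infty,\phi)$ is existential---but leaves the central step unresolved. You correctly flag as the ``main obstacle'' the passage from the excision relation $pap\approx\rho(a)p$ to approximate \emph{free} independence of your candidate isometries from $F$, and you gesture at Kirchberg--Phillips as supplying ``the required approximate-uniqueness statement for such configurations,'' but you never say what statement that is or how it forces the alternating mixed moments $\rho(a_1w_1a_2w_2\cdots)$ to be small. Excision only controls the first-order compressions $pap$; free independence is a condition on words of arbitrary length, and nothing in your sketch produces it. The appeal to classification at this point is too vague to count as a proof, and you yourself acknowledge that this is where the real work would lie.

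The paper's argument bypasses this difficulty entirely and is considerably shorter. Rather than constructing an approximately free copy of $\mathcal O_\infty$ inside $A$, it shows directly that the C*-algebra $B=(A,\rho)*(\mathcal O_\infty,\phi)$ is \emph{isomorphic} to $A$. Writing $(\mathcal O_\infty,\phi)\cong\freeprod_{n=1}^\infty(C_r^*(\N),\delta_0)$, one has $B=\overline{\bigcup_n A_n}$ with $(A_{n+1},\rho_{n+1})=(A_n,\rho_n)*(C_r^*(\N),\delta_0)$; each such step is a Cuntz--Pimsner algebra over $A_n$ (via Kumjian and Shlyakhtenko), hence a Kirchberg algebra in the UCT class with the inclusion $A_n\hookrightarrow A_{n+1}$ a KK-equivalence. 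Thus $B$ is Kirchberg, UCT, and $A\hookrightarrow B$ induces an isomorphism on $(K_*,[1])$, so $A\cong B$ by Kirchberg--Phillips. Since $(B,\tau)$ is selfless (Corollary~\ref{corofinftyoinfty} and Theorem~\ref{selflessprod}) and $\tau=\rho*\phi$ is pure, the isomorphism transports selflessness to $(A,\rho')$ for some pure state $\rho'$, and then homogeneity of pure states (which you also invoke) gives selflessness for the original $\rho$. The classification theorem is used once, to identify $B$ with $A$ as C*-algebras; it is never asked to manufacture free independence.
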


\begin{proof}
	Let  $(B,\tau)=(A,\rho)*(\mathcal O_\infty, \phi)$, where $\phi$ is the pure state in \eqref{finftyoinfty}. Then $(B,\tau)$ is selfless,  by 
	Corollary \ref{corofinftyoinfty} and Theorem \ref{selflessprod}. Since the reduced free product of pure states is pure (by \cite[Theorem 1.6.5]{dykema-nica-voiculescu}), $\tau$ is pure.
	
	Let us show that $B\cong A$. Let 
	\[
	(A_1,\rho_1)=(A,\rho)*(C_r^*(\N),\delta_0).
	\]
	Then $A_1$ is isomorphic to the Cuntz-Pimsner algebra $\mathcal O_E$ associated to the $A$-Hilbert bimodule  $E=H_\rho\otimes A$, where  $\pi_\rho\colon A\to B(H_\rho)$  is the GNS representation obtained from $\rho$.  See \cite[Proposition 3.4]{kumjian} and \cite[Theorem 2.3]{shlyakhtenko}.  By \cite[Theorem 3.1]{kumjian}, $A_1$ is a Kirchberg algebra  and the embedding $A\hookrightarrow A_1$ is a KK-equivalence.
	It follows that $A_1$ is in the UCT class and that the inclusion induces 
	an isomorphism in K-theory.  Continue defining $(A_{n+1}, \rho_{n+1})=(A_n,\rho_n)*(C_r^*(\N),\delta_0)$ for  $n=1,2,\ldots$. Then each $A_n$ is a Kirchberg algebra in the UCT class and the embedding $A_n\hookrightarrow A_{n+1}$ is an isomorphism in K-theory. It follows that $B=\overline{\bigcup_n A_n}$ is a Kirchberg algebra in the UCT class and the inclusion $A\hookrightarrow B$ induces an isomorphism  $(K_*(A),[1])\cong (K_*(B),[1])$. By the Kirchberg-Phillips classification theorem \cite[Theorem 8.4.1]{rordamencyc}, $A\cong B$.
	
	Through the isomorphism of $A$ and $B$ we obtain  that $(A,\rho')$ is selfless for some pure state $\rho'$. By the homogeneity of the set of pure states of a simple separable C*-algebra \cite{k-o-s}, $(A,\rho)$ is selfless.
\end{proof}

We have already encountered examples of tracial selfless C*-algebras, e.g., $C_r^*(\mathbb F_\infty)$.

\begin{theorem}
The following simple C*-algebras are selfless relative to their unique tracial state:
\begin{enumerate}[(i)]
\item
The Jiang-Su algebra $\mathcal Z$.

\item
The infinite dimensional  UHF C*-algebras.

\item
The tracial ultrapower of a separable $\mathrm{II}_1$ factor over a free ultrafilter on $\N$.
\end{enumerate}
\end{theorem}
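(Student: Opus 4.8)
The plan is to treat all three cases through Theorem~\ref{selflessunitary}(iv): in each case I would verify that the first factor embedding of the algebra $A$ into $(A,\rho)*(C(\T),\lambda)$ is existential (noting $A\neq\C$ and that the unique trace, being faithful, has faithful GNS representation). The uniform mechanism I would use is to produce, for some ultrafilter $\mathcal V$, a Haar unitary $v$ in the $C^*$-ultrapower $(A^{\mathcal V},\rho^{\mathcal V})$ that is $*$-free from the diagonal copy of $A$ with respect to $\rho^{\mathcal V}$ \emph{and} that realizes the reduced free product norm, meaning $\|p(\bar a,v)\|_{A^{\mathcal V}}=\|p(\bar a,v)\|_{(A,\rho)*(C(\T),\lambda)}$ for every $*$-polynomial $p$ and tuple $\bar a$ in $A$. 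With such a $v$, the obvious map $(A,\rho)*(C(\T),\lambda)\to C^*(A,v)$ is an isomorphism of $C^*$-probability spaces, and the inclusion $C^*(A,v)\hookrightarrow A^{\mathcal V}$ furnishes the embedding required in diagram~\eqref{sigmatheta}. I note that the norm condition is automatic whenever $v$ can be located inside a $C^*$-subalgebra of $A^{\mathcal V}$ on which $\rho^{\mathcal V}$ is faithful, since such a subalgebra must carry the reduced free product state; this is exactly the situation in case (iii).

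For (iii), with $A=M^{\mathcal U}$, I would first reduce to separable pieces: by the downward L\"owenheim--Skolem theorem $A$ is the direct limit of its separable elementary submodels $B$ (in the theory of $C^*$-probability spaces), so by Lemma~\ref{unusedsymbol} it is enough to show that $B\hookrightarrow B*(C(\T),\lambda)$ is relatively existential in $A$ for each $B$. Fix $B$ and let $N$ be its weak closure inside the $\mathrm{II}_1$ factor $M^{\mathcal U}$, a tracial von Neumann algebra with separable predual. Popa's theorem \cite{popa} produces a Haar unitary $*$-free from $N$ in some tracial ultrapower, so this type is consistent, and by the $\aleph_1$-saturation of $M^{\mathcal U}$ one can realize it already inside $M^{\mathcal U}$: there is a Haar unitary $v\in M^{\mathcal U}$ that is $*$-free from $N$, hence from $B$, with respect to $\tau^{\mathcal U}$. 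As $\tau^{\mathcal U}$ is faithful on $M^{\mathcal U}$, it is faithful on $C^*(B,v)$, which is then isomorphic as a $C^*$-probability space to $(B,\tau^{\mathcal U}|_B)*(C(\T),\lambda)$; the inclusion $C^*(B,v)\subseteq M^{\mathcal U}$, composed with a diagonal embedding into an ultrapower of $M^{\mathcal U}$, gives the relative existentiality. Lemma~\ref{unusedsymbol} and Theorem~\ref{selflessunitary} then yield that $(M^{\mathcal U},\tau^{\mathcal U})$ is selfless. The key simplification is that the free Haar unitary lives in $M^{\mathcal U}$ itself, which has a faithful trace, so no lifting through the trace-kernel ideal of a $C^*$-ultrapower is needed.

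For (i), the algebra $\mathcal Z$, I would build the Haar unitary directly in $\mathcal Z^{\mathcal U}$ via a random matrix model. Write $\mathcal Z=\overline{\bigcup_m Z_{p_m,q_m}}$ as an exhaustion by prime dimension drop algebras and fix a countable generating set. Near the endpoint where $Z_{p_m,q_m}$ restricts to $M_{p_m}\otimes 1$, the generators are approximated by elements of this matrix block; placing a Haar-distributed unitary of $U(p_m)$ there and extending it to a unitary $w_m\in Z_{p_m,q_m}\subseteq\mathcal Z$, the strong asymptotic freeness of Haar random unitaries from deterministic matrices (Haagerup--Thorbj\o rnsen; Collins--Male) gives, for almost every choice, that $(\bar a,w_m)$ converges strongly --- in moments and in operator norm --- to $(\bar a,v)$ with $v$ a free Haar unitary; matching the trace of $\mathcal Z$, which is Lebesgue measure times the matrix trace, against the free product trace requires additional care in how the $w_m$ are chosen. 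The resulting $v:=[(w_m)_m]\in\mathcal Z^{\mathcal U}$ is then a Haar unitary $*$-free from the diagonal $\mathcal Z$ realizing the reduced free product norm, so $(\mathcal Z,\tau)$ is selfless by the first paragraph. Case (ii) then needs no further random matrices: an infinite-dimensional UHF algebra $M_{\mathfrak p}$ is $\mathcal Z$-stable and hence $M_{\mathfrak p}\cong\varinjlim_n M_{d_n}(\mathcal Z)$ for a suitable sequence $d_1\mid d_2\mid\cdots$; each $M_{d_n}(\mathcal Z)$ is selfless by Theorem~\ref{matrices} since $\mathcal Z$ is selfless with faithful trace, so $M_{\mathfrak p}$ is selfless by Theorem~\ref{directlimits}. (Case (ii) also admits a direct and easier matricial proof, as a UHF algebra literally contains the matrix subalgebras $M_{d_n}$ in which to place Haar random unitaries.)

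The main obstacle is concentrated in case (i). Since $\mathcal Z$ is projectionless, so is $\mathcal Z^{\mathcal U}$, and neither contains a matrix subalgebra in which to hide a Haar unitary; the free Haar unitary must therefore be assembled out of genuine unitaries of $\mathcal Z$, with simultaneous control of their joint $*$-distribution relative to the trace \emph{and} of the operator norms of $*$-polynomials in them together with a prescribed finite set. Establishing this strong (not merely moment-wise) asymptotic freeness within the dimension drop building blocks of $\mathcal Z$, and reconciling it with the specific trace of $\mathcal Z$, is the technical heart, and this is where the sharp random matrix estimates of Haagerup--Thorbj\o rnsen / Collins--Male are indispensable. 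The remaining ingredients --- Popa's theorem for (iii), the faithfulness of reduced free products of faithful traces \cite{dykema-faithful}, and Theorems~\ref{matrices} and \ref{directlimits} for (ii) --- are by now standard.
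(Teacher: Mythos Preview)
Your treatment of (ii) and (iii) is essentially the paper's. For (iii) the paper argues directly with arbitrary separable C*-subalgebras of $M_\omega$ rather than elementary submodels, and it does not need saturation: Popa's Haar unitary already lives in $M_\omega$, so one gets immediately an embedding $(A,\tau_\omega|_A)*(C(\T),\lambda)\hookrightarrow (M_\omega,\tau_\omega)$ restricting to the inclusion on $A$, and Lemma~\ref{unusedsymbol} finishes. Your detour through $\aleph_1$-saturation is harmless but unnecessary. For (ii) your argument is the paper's verbatim.

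For (i) your outline has a genuine gap, and the paper takes a quite different and much shorter route. Your plan places a Haar-random unitary in the fiber $M_{p_m}\otimes 1$ at one endpoint of a dimension drop algebra $Z_{p_m,q_m}$ and invokes Collins--Male strong asymptotic freeness. But the generators of $\mathcal Z$ are \emph{not} approximated by their endpoint values: they are continuous paths in $M_{p_mq_m}$, the norm in $Z_{p_m,q_m}$ is a supremum over $[0,1]$, and the trace of $\mathcal Z$ is Lebesgue integration of the fibrewise trace. Strong asymptotic freeness at a single fiber controls neither the sup-norm of $*$-polynomials in $(\bar a,w_m)$ over $[0,1]$ nor the integrated trace. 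Extending the unitary away from the endpoint while preserving both the Haar moment conditions and the norm asymptotics is precisely the nontrivial content, and the sentence ``requires additional care'' does not discharge it. In effect you are re-deriving (a strengthening of) Ozawa's embedding $C_r^*(\F_\infty)\hookrightarrow\mathcal Z^{\mathcal U}$, with the extra demand that the image be free from the diagonal.

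The paper sidesteps all of this. It uses an existing embedding $\theta\colon\mathcal Z\hookrightarrow C_r^*(\F_\infty)$ \cite[Proposition~6.3.1]{nccw} and Ozawa's embedding $\sigma\colon C_r^*(\F_\infty)\hookrightarrow\mathcal Z^{\mathcal U}$ \cite{ozawa}. The composition $\sigma\theta\colon\mathcal Z\to\mathcal Z^{\mathcal U}$ need not be the diagonal, but by strong self-absorption \cite[Theorem~2]{FHRT} any two unital $*$-homomorphisms $\mathcal Z\to\mathcal Z^{\mathcal U}$ are unitarily conjugate; conjugating $\sigma$ makes $\sigma\theta$ equal to the diagonal, so $\theta$ is existential. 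Since $(C_r^*(\F_\infty),\tau)$ is selfless, Lemma~\ref{subselfless} gives that $(\mathcal Z,\tau)$ is selfless. The random matrix work is thus entirely encapsulated in Ozawa's theorem, and the uniqueness-up-to-conjugacy of embeddings of $\mathcal Z$ replaces your attempt to force freeness from the diagonal by hand.
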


\begin{proof}
(i) By \cite[Proposition 6.3.1]{nccw}, there is a (unital) embedding of  $\theta \colon \mathcal Z \to C_r^*(\F_\infty)$. 
 On the other hand, Ozawa has shown in \cite[Theorem 4.1]{ozawa} that there is an embedding $\sigma\colon C_r^*(\F_\infty)\to \mathcal Z^{\U}$. Composing these embeddings, we obtain   $\sigma\theta$, embedding of $\mathcal Z$ in $\mathcal Z^\U$.  Since all unital *-homomorphisms 
from $\mathcal Z$ into $\mathcal Z^\U$ are unitarily equivalent \cite[Theorem 2]{FHRT}, there exists a unitary $u\in \mathcal Z^\U$ such that $(\mathrm{Ad}_u\sigma)\theta$ agrees with the  diagonal embedding of $\mathcal Z$ in $\mathcal Z^\U$. Note that since $\mathcal Z$, $C_r^*(\mathbb F_\infty)$, and $\mathcal Z^\U$
are all monotracial, these embedding preserve the respective traces. It follows that $\theta$ is an existential embedding of C*-probability spaces. Since $C_r^*(\F_\infty)$ is selfless, so is $\mathcal Z$, by Lemma \ref{subselfless}.

(ii) Let  $A$ be an infinite dimensional UHF C*-algebra. Write $A\cong \varinjlim_i M_{n_i}(\C)$. Tensoring with $\mathcal Z$
in this inductive limit and using that $A\otimes \mathcal Z\cong A$,   we get that  $A\cong\varinjlim M_{n_i}(\mathcal Z)$. 
It follows that $A$ is selfless by part (i) and Theorems \ref{directlimits} and \ref{matrices}.

(iii) Let $M$ be a separable $\mathrm{II}_1$ factor with tracial ultrapower $M_\omega$. Let $\tau_\omega$ denote the trace on $M_\omega$. Let $A$ be a separable C*-subalgebra of $M_\omega$.

By Popa's theorem \cite{popa}, there exists a Haar unitary $u \in M_\omega$ that is freely independent from $A$. Since $\tau_\omega$ is a faithful trace, there exists an embedding  
\[
\sigma \colon (A, \tau_\omega|_A) \ast (C(\mathbb{T}), \lambda)\to (M_\omega,\tau_\omega)
\]
such that  $\sigma\theta|_A$ agrees  with the inclusion of $A$ in $M_\omega$ (where $\theta\colon A\to A*C(\T)$ is the first factor embedding). Since $M_\omega$ is the direct limit of its separable C*-subalgebras, it follows, by Lemma \ref{unusedsymbol}, that $M_\omega$ is selfless.
\end{proof}

Combining the previous theorem with Theorem \ref{selflessprod} we get:
\begin{corollary}
For any C*-probability space $(A,\rho)$, with $A$ separable and $\rho$ inducing a faithful GNS representation,
the reduced free product  $(A,\rho)*(\mathcal Z,\tau)$ is selfless. 
\end{corollary}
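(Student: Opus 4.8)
The plan is to deduce the corollary directly from the previous theorem together with Theorem \ref{selflessprod}. The key input is that the Jiang--Su algebra $\mathcal Z$, equipped with its unique tracial state $\tau$, is selfless by part (i) of the previous theorem. Since $\mathcal Z$ is selfless and $(A,\rho)$ is a separable C*-probability space whose state $\rho$ has faithful GNS representation, Theorem \ref{selflessprod} applied with the selfless algebra $(\mathcal Z,\tau)$ in the role of ``$(A,\rho)$'' and with $(A,\rho)$ in the role of ``$(B,\tau)$'' yields that $(\mathcal Z,\tau)*(A,\rho)$ is selfless. Up to the canonical isomorphism of reduced free products swapping the two factors, this is the same as $(A,\rho)*(\mathcal Z,\tau)$, which is therefore selfless.

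Concretely, I would write: by the previous theorem, $(\mathcal Z,\tau)$ is selfless, where $\tau$ is the unique tracial state of $\mathcal Z$. Since $A$ is separable and $\rho$ has faithful GNS representation, Theorem \ref{selflessprod} (with $(\mathcal Z,\tau)$ as the selfless factor and $(A,\rho)$ as the separable factor) shows that $(\mathcal Z,\tau)*(A,\rho)$ is selfless. As the reduced free product does not depend on the order of the factors, $(A,\rho)*(\mathcal Z,\tau)$ is selfless. Note that both hypotheses of Theorem \ref{selflessprod} are met: $\mathcal Z\neq\C$ and $\tau$ has faithful GNS representation (indeed $\mathcal Z$ is simple with faithful trace), and $(A,\rho)$ is separable with faithful GNS representation by assumption.

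There is essentially no obstacle here; the only point requiring a word of care is the identification of $(\mathcal Z,\tau)*(A,\rho)$ with $(A,\rho)*(\mathcal Z,\tau)$, which is immediate from the symmetry of the reduced free product construction in its factors (the free product state and the free independence relation are symmetric). If one prefers to avoid invoking this symmetry, one could instead observe that Theorem \ref{selflessprod} as stated already allows the separable factor on the right, so no reordering is needed once one reads the roles correctly; but stating the symmetry explicitly is cleanest.
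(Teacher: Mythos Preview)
Your proof is correct and follows exactly the approach the paper indicates: combine part (i) of the previous theorem (that $(\mathcal Z,\tau)$ is selfless) with Theorem \ref{selflessprod}, applied with $(\mathcal Z,\tau)$ as the selfless factor and $(A,\rho)$ as the separable factor. The paper simply writes ``Combining the previous theorem with Theorem \ref{selflessprod} we get'' and leaves the details implicit, so your explicit mention of the symmetry of the reduced free product is a welcome clarification rather than a deviation.
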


	\begin{question}
		Let $A$ be a simple, separable, unital, nuclear, $\mathcal{Z}$-stable, monotracial C*-algebra that satisfies the UCT. Is $A$ selfless (with respect to its trace)?
	\end{question}

\section{Eigenfree C*-probability spaces}
In \cite{dykema-rordamGAFA}, Dykema and R{\o}rdam call a C*-probability space 
$(A,\tau)$ eigenfree
 if there exists an  endomorphism $\theta\colon A\to A$
and a Haar unitary $u\in A$ such that $\tau\theta=\tau$, and  $\theta(A)$ and $C^*(u)$ are free relative to $\tau$.  
They show in  \cite[Proposition 3.2]{dykema-rordamGAFA} that a reduced free product 
$(A_1,\tau_1)*(A_2,\tau_2)$ satisfying Avitzour's condition is eigenfree. 
In particular, $C_r^*(\mathbb F_n)$ is eigenfree for  $n=2,3,\ldots,\infty$. 

\begin{proposition}
	Let $(A,\rho)$ be  a C*-probability space, with $\rho$ a faithful state.
	Suppose that $(A,\rho)$  is eigenfree relative to an  endomorphism $\theta\colon A\to A$. 
	Let $B$ be the inductive limit of the stationary system $A\stackrel{\theta}{\to}{A}$.
	Let $\tau$ be the state on $B$, projective limit of the state  $\rho$.
	Then $(B,\tau)$ is selfless. 
\end{proposition}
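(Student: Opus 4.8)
The plan is to realize the inductive limit $B$ as having, inside a suitable copy of itself, a Haar unitary free from it with faithful compression, and then invoke the characterization of selflessness in Theorem \ref{selflessunitary}(iv). The key structural input is that the connecting map $\theta\colon A\to A$ is eigenfree, so there is a Haar unitary $u\in A$ with $\theta(A)$ and $C^*(u)$ free relative to $\rho$. First I would set up notation: write $B=\varinjlim(A\stackrel{\theta}{\to}A)$ with canonical maps $\iota_n\colon A\to B$, so that $\iota_{n+1}\theta=\iota_n$, and $\tau$ is the compatible state with $\tau\iota_n=\rho$ for all $n$. The C*-subalgebras $\iota_n(A)$ form an upward directed family (in fact a chain) with dense union in $B$, and each $\iota_n$ is an embedding since each $A$ is simple (it has the uniform Dixmier property by Avitzour's condition, hence by Theorem \ref{dichotomy}; in any case $\theta$ is injective as $\rho\theta=\rho$ and $\rho$ is faithful). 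Since $A\neq\C$, also $B\neq\C$, and $\tau$ is faithful as a projective limit of faithful states, so $(B,\tau)$ is an admissible C*-probability space.

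Next I would produce, inside $B$, the required free Haar unitary. The idea is that the shift structure of the stationary inductive limit makes $B$ look like a reduced free product of $A$ with $C^*(u)$. Concretely, inside $A\subseteq A$ (the first stage) we have $C^*(u)$ free from $\theta(A)$; pushing forward along $\iota_1$, the element $v:=\iota_1(u)\in B$ is a Haar unitary (as $\tau\iota_1=\rho$), and $C^*(v)$ is free from $\iota_1(\theta(A))=\iota_2(A)$ relative to $\tau$. More is true: I claim $C^*(v)$ is free from all of $B=\overline{\bigcup_n\iota_n(A)}$. To see this, note $\iota_2(A)\subseteq\iota_3(A)\subseteq\cdots$, so $\overline{\bigcup_{n\geq 2}\iota_n(A)}$ is again all of $B$ — wait, this needs care, since $\iota_1(A)$ need not sit inside $\overline{\bigcup_{n\geq 2}\iota_n(A)}$. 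So instead I would argue one stage at a time using \cite[Lemma 1.3]{dykema-rordamGAFA}: at stage $1$ we have freely independent unital embeddings of $A$ (via $\mathrm{id}$) and of $C^*(u)$ into $A$ with faithful free-product state $\rho|_{C^*(\theta(A),u)}$... actually the cleanest route is: the eigenfree hypothesis gives that $C^*(\theta(A),u)\cong\theta(A)*C^*(u)\cong A*C(\T)$ with its free product state, and this state is faithful (it is the restriction of the faithful state $\rho$). Applying $\iota_1$ and using $\iota_1\theta=\iota_0$... I would instead set up the free-product picture at the limit directly.

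Here is the cleaner version of the middle step, which I expect to be the main obstacle. Consider the first-factor embedding $j\colon (A,\rho)\to (A,\rho)*(C(\T),\lambda)$. The eigenfree data gives a *-homomorphism $(A,\rho)*(C(\T),\lambda)\to(A,\rho)$ sending the first factor via $\theta$ and $C(\T)$ onto $C^*(u)$ — this is well-defined and injective by \cite[Lemma 1.3]{dykema-rordamGAFA} since $\theta(A)$ and $C^*(u)$ are free and the resulting state (the restriction of $\rho$) is faithful. Composing with $\iota_{n+1}$ for each $n$ gives embeddings $\varphi_n\colon (A,\rho)*(C(\T),\lambda)\to B$ whose restriction to the first factor is $\iota_{n+1}\theta=\iota_n$. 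Thus $\varphi_n\circ j=\iota_n$. By Lemma \ref{unusedsymbol} applied to the upward directed family $(\iota_n(A))_n$ with the maps $\varphi_n$ exhibiting that each $\iota_n=j$ "relatively factors" through $B$ — more precisely, one forms from $(\varphi_n)_n$ an embedding $\varphi\colon (A*C(\T))\to B^{\U}$ with $\varphi j$ equal to the diagonal embedding of $A$ into $B^{\U}$ on each $\iota_n(A)$, hence on all of $B$ — we conclude that the first-factor embedding $(B,\tau)\to(B,\tau)*(C(\T),\lambda)$ is existential. Indeed: $\varphi$ shows $j_B$ restricted to each $\iota_n(A)$ is existential relative to $B$, and $\bigcup_n\iota_n(A)$ is dense, so Lemma \ref{unusedsymbol} gives that $(B,\tau)\to(B,\tau)*(C(\T),\lambda)$ is existential. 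By Theorem \ref{selflessunitary}(iv), $(B,\tau)$ is selfless. The delicate point to get right is the bookkeeping in the last paragraph: one must check that $\varphi_n$ indeed restricts to $\iota_n$ on the first factor (using $\iota_{n+1}\circ\theta=\iota_n$), and that passing to the ultrapower $B^\U$ the maps $(\varphi_n)_n$ assemble so that $\varphi\circ j$ agrees with the diagonal embedding on the dense subalgebra $\bigcup_n\iota_n(A)$, which is exactly the hypothesis of Lemma \ref{unusedsymbol} (relative existentiality of $j|_{\iota_n(A)}$ in $B$).
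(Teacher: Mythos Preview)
Your proposal is correct and follows essentially the same route as the paper. After the false starts, your ``cleaner version'' is exactly the paper's argument: the eigenfree data yields an embedding $(A,\rho)*(C(\T),\lambda)\hookrightarrow (A,\rho)$ sending the first factor via $\theta$ and the circle generator to $u$; composing with $\iota_{n+1}$ gives $\sigma_n\colon B_n*C(\T)\to B$ (identifying $B_n\cong A$ via $\iota_n$) with $\sigma_n$ restricting to the inclusion on $B_n$, so each first-factor embedding $B_n\hookrightarrow B_n*C(\T)$ is relatively existential in $B$; Lemma~\ref{unusedsymbol} and Theorem~\ref{selflessunitary}(iv) finish. One cosmetic point: your sentence about assembling the $\varphi_n$ into a single $\varphi\colon A*C(\T)\to B^{\mathcal U}$ agreeing with the diagonal on all of $B$ is not quite right (the identification of $A$ with $B_n$ changes with $n$), but you immediately restate the correct relative-existentiality claim, which is all that is needed.
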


\begin{proof}
	Let $u\in A$ be a Haar unitary such that $\theta(A)$ and $u$ are freely independent.
	For $n=1,2,\ldots$, let $\theta_{n,\infty}\colon A\to B$ denote the inductive limit maps. 
	Define  $B_n=\theta_{n,\infty}(A)$ and $u_n=\theta_{n,\infty}(u)$ for all $n$. 
	Let $\phi_n\colon B_n\hookrightarrow B_n*C(\T)$ 
	denote  the first factor embeddings for all $n$.

	The free independence of $\theta(A)$ and $u$, relative to $\rho$, readily implies that $B_{n}$ and $u_{n+1}$ are freely independent in $B$,
	relative to $\tau$.  Notice also that  $\tau$ is faithful. (Proof: Let $I$ be the 2-sided ideal-kernel  of $\tau$.
	Then $I=\varinjlim I_n$, with $I_n=\theta_{n,\infty}^{-1}(I)$ for $n=1,2,\ldots$. But $I_n=0$ for all $n$, since $\rho$ is faithful and vanishes on $I_n$.  Thus, $I=0$.) Since $(C^*(u_{n+1}),\tau)\cong (C(\T),\lambda)$, there exists an embedding
	\[
	\sigma_n\colon (B_n,\tau|_{B_n})*(C(\T),\lambda)\to (B,\tau)
	\] 
such that 	$\sigma_n\phi_n$ agrees with the inclusion of $B_n$ in $B$.  This shows that  $\phi_n$ is relatively existential in $B$ for all $n$. 
	It follows, by Lemma \ref{unusedsymbol}, that the embedding of $B$ in $B*C(\T)$ is existential, i.e., $(B,\tau)$ is selfless. 
\end{proof}

\section{C*-algebras that embed in $A^{\U}$}\label{ultraembeddings}

Let $(A,\rho)$ be a selfless C*-probability space. Here we consider the class of separable C*-probability spaces $(B,\tau)$
that embed in $(A^{\U},\rho^{\U})$ for some nonprincipal ultrafilter $\U$ on $\N$. We note this class is independent of the choice of $\U$
(\cite[Corollary 4.3.4]{Cstarmodel}).

\begin{theorem}\label{ultraembeddingsthm}
Let $(A, \rho)$ be a selfless C*-probability space, with $\rho$ inducing a faithful GNS representation. 
Let $\{(B_i, \tau_i) : i \in \mathbb{N}\}$ be  separable C*-probability spaces that embed in 
an ultrapower of $(A, \rho)$. Then the reduced free product $\ast_{i \in \mathbb{N}} (B_i, \tau_i)$ embeds in 
an ultrapower  $(A, \rho)$.
\end{theorem}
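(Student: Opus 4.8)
The plan is to combine free exactness (Theorem~\ref{freeexactness}) with the defining feature of selflessness --- that $(A,\rho)\ast(A,\rho)$ embeds into an ultrapower of $(A,\rho)$ --- through an induction on the number of free factors, and then to reach the infinite free product by an inductive-limit argument. First I would reduce to the case that $A$ is separable: passing to a separable elementary submodel $(A',\rho')\prec(A,\rho)$ (as in the proofs of Theorems~\ref{selflessunitary} and \ref{selflessprod}), Lemma~\ref{subselfless} shows $(A',\rho')$ is selfless, and since $(A',\rho')$ is elementarily equivalent to $(A,\rho)$ each separable $(B_i,\tau_i)$ that embeds into an ultrapower of $(A,\rho)$ also embeds into one of $(A',\rho')$; conversely, composing with the inclusion $A'\hookrightarrow A$ carries an embedding of $\ast_{i\in\N}(B_i,\tau_i)$ into an ultrapower of $(A',\rho')$ to one into an ultrapower of $(A,\rho)$. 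So assume $A$ is separable and fix a nonprincipal ultrafilter $\U$ on $\N$; by \cite[Corollary~4.3.4]{Cstarmodel} it then suffices to embed the relevant separable algebras into $(A^\U,\rho^\U)$.

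Set $C_n=\ast_{i=1}^n(B_i,\tau_i)$; this is separable and its free product state has faithful GNS representation. I claim each $C_n$ embeds into $(A^\U,\rho^\U)$. The case $n=1$ is the hypothesis. Assuming the claim for $n$, the separable C*-probability spaces $C_n$ and $B_{n+1}$ both have faithful GNS and both embed into $\prod_\U(A,\rho)$, so Theorem~\ref{freeexactness}, applied with the constant sequences $A_1^{(m)}=A_2^{(m)}=A$, yields an embedding
\[
C_{n+1}=C_n\ast B_{n+1}\hookrightarrow \prod_\U(A\ast A,\rho\ast\rho)=(A\ast A)^\U.
\]
On the other hand, selflessness of $(A,\rho)$ means the first-factor embedding $(A,\rho)\hookrightarrow(A,\rho)\ast(A,\rho)$ is existential, so $(A,\rho)\ast(A,\rho)$ embeds into an ultrapower of $(A,\rho)$; as $A\ast A$ is separable we may take this ultrapower to be $(A^{\mathcal V},\rho^{\mathcal V})$ for some nonprincipal $\mathcal V$ on $\N$. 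Composing, $C_{n+1}$ embeds into $(A^{\mathcal V})^\U$, which is again an ultrapower of $(A,\rho)$, over the countable index set $\N\times\N$; by \cite[Corollary~4.3.4]{Cstarmodel} again, $C_{n+1}$ embeds into $(A^\U,\rho^\U)$, completing the induction.

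For the infinite free product, write $\ast_{i\in\N}(B_i,\tau_i)=\overline{\bigcup_n C_n}$, an increasing union of separable C*-subalgebras along isometric, state-preserving inclusions. Fix embeddings $\psi_n\colon C_n\hookrightarrow A^\U$ and a nonprincipal ultrafilter $\mathcal V$ on $\N$. The assignment sending $c\in C_n$ to the $\mathcal V$-class of the sequence equal to $c$ in $C_m$ for every $m\ge n$ (and to $0$ for $m<n$) is a well-defined, isometric, state-preserving map $\bigcup_n C_n\to\prod_{\mathcal V}C_n$, which extends by continuity to an embedding $\ast_{i\in\N}(B_i,\tau_i)\hookrightarrow\prod_{\mathcal V}C_n$. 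Composing with $\prod_{\mathcal V}\psi_n\colon\prod_{\mathcal V}C_n\hookrightarrow\prod_{\mathcal V}A^\U=(A^\U)^{\mathcal V}$ realizes $\ast_{i\in\N}(B_i,\tau_i)$ inside $(A^\U)^{\mathcal V}$, an ultrapower of $(A,\rho)$ over $\N\times\N$, hence (by \cite[Corollary~4.3.4]{Cstarmodel}) inside $(A^\U,\rho^\U)$, as desired.

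The step I expect to require the most care is the bookkeeping with ultrafilters: both Theorem~\ref{freeexactness} and the ambient notion of ``embedding into an ultrapower'' are set up over $\N$, whereas the natural constructions --- iterated ultrapowers in the induction, and the ultraproduct $\prod_{\mathcal V}C_n$ in the limit step --- live over $\N\times\N$, so one must repeatedly invoke \cite[Corollary~4.3.4]{Cstarmodel} to come back to $(A^\U,\rho^\U)$. The remaining ingredients --- that the displayed maps are isometric and state-preserving, that reduced free products of C*-probability spaces with faithful GNS again have faithful GNS, and that an iterated ultrapower is an ultrapower --- are routine.
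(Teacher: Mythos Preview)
Your proof is correct but takes a different route from the paper's. The paper works at the level of \emph{existential embeddings} rather than at the level of ``embeds into an ultrapower'': using Theorem~\ref{selflessunitary}(ii), the first-factor embedding $(A,\rho)\hookrightarrow (A,\rho)*(B_1,\tau_1)$ is existential; then Corollary~\ref{freeexistential} (free product of existential embeddings is existential) and Lemma~\ref{lemexistential} give by induction that $(A,\rho)\hookrightarrow (A,\rho)*\freeprod_{i=1}^n(B_i,\tau_i)$ is existential for every $n$; finally Lemma~\ref{unusedsymbol} passes to the infinite free product, and existentiality of $(A,\rho)\hookrightarrow (A,\rho)*\freeprod_{i=1}^\infty(B_i,\tau_i)$ immediately yields the embedding of $\freeprod_{i=1}^\infty(B_i,\tau_i)$ into an ultrapower. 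This avoids all of your iterated-ultrapower bookkeeping and, as a bonus, proves the stronger fact that the first-factor embedding is existential. Your approach, by contrast, works directly with Theorem~\ref{freeexactness} and the raw definition of selflessness, so it is more self-contained (it does not invoke the nontrivial equivalence Theorem~\ref{selflessunitary}(ii)), at the price of repeated appeals to \cite[Corollary~4.3.4]{Cstarmodel} and the iterated-ultrapower identifications you flagged. Both are valid; the paper's argument is shorter because the existential-embedding formalism packages exactly the composition and limit behavior you had to redo by hand.
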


\begin{proof}
We may assume that $A$ is separable by considering a separable elementary submodel C*-subalgebra $(A', \rho') \subseteq (A,\rho)$, which is also selfless.

By Theorem \ref{selflessunitary}, the first factor embedding $(A,\rho)\hookrightarrow (A,\rho)*(B_1,\tau_1)$ is existential. Taking reduced free product 
with the identity on $(B_2,\tau_2)$, and applying Corollary \ref{freeexistential}, we get that 
\[
(A,\rho)*(B_2,\tau_2)\hookrightarrow (A,\rho)*(B_1,\tau_1)*(B_2,\tau_2)
\]
is existential. But the first factor embedding of $(A,\rho)$ in $(A,\rho)*(B_2,\tau_2)$ is existential (by Theorem \ref{selflessunitary}).
Thus, the embedding 
\[
(A,\rho)\hookrightarrow (A,\rho)*(B_1,\tau_1)*(B_2,\tau_2)
\]
is existential. Continuing in this way we obtain that 
\[
(A,\rho)\hookrightarrow (A,\rho)*\freeprod_{i=1}^n(B_i,\tau_i)
\]
is existential. Since the C*-algebras $\freeprod_{i=1}^n B_i$ form an increasing family with dense union in $\freeprod_{i=1}^\infty B_i$,
the embedding 
\[
(A,\rho)\hookrightarrow (A,\rho)*\freeprod_{i=1}^\infty(B_i,\tau_i)
\]
is existential. In particular, $\freeprod_{i=1}^\infty(B_i,\tau_i)$ embeds in an ultrapower of $(A,\rho)$.
\end{proof}

\begin{remark}
A von Neumann algebra analog of the previous theorem holds for the tracial ultrapower of any separable II$_1$ factor, by Popa's \cite[Theorem 2.1]{popa}.  	
\end{remark}	

Recall that a C*-algebra is called MF if it is separable and embeds in $\mathcal Q^{\U}$ \cite[Definition 3.2.1]{BlackadarKirchberg}, where $\mathcal Q=\bigotimes_{n=1}^\infty M_n(\C)$. Since $\mathcal Q$ is selfless, by Theorem \ref{examples} (ii), we obtain: 
 
\begin{corollary}
A countable reduced free product of MF C*-algebras is MF.
\end{corollary}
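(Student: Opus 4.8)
The plan is to assemble the corollary directly from the results already in hand. First, I would recall that a C*-algebra $B$ is MF precisely when it is separable and embeds into an ultrapower $\mathcal{Q}^{\mathcal U}$ for some (equivalently any) nonprincipal ultrafilter $\mathcal U$ on $\mathbb N$. Now suppose $\{B_i : i\in\mathbb N\}$ is a countable family of MF C*-algebras, each equipped with a state $\tau_i$ with faithful GNS representation so that the reduced free product $\freeprod_{i\in\mathbb N}(B_i,\tau_i)$ is defined; each $(B_i,\tau_i)$ embeds, as a C*-probability space, into $(\mathcal{Q}^{\mathcal U},\text{tr}^{\mathcal U})$ — one obtains the required state-preservation simply by letting $\tau_i$ be the restriction of $\text{tr}^{\mathcal U}$ under the embedding, or by noting that any faithful-GNS state extends. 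The second ingredient is that $(\mathcal Q,\text{tr})$ is selfless: this is Theorem \ref{examples}(ii), since $\mathcal Q=\bigotimes_{n=1}^\infty M_n(\mathbb C)$ is an infinite-dimensional UHF C*-algebra, selfless with respect to its unique tracial state.

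With those two facts, the corollary is an immediate application of Theorem \ref{ultraembeddingsthm}: taking $(A,\rho)=(\mathcal Q,\text{tr})$, which is selfless with $\rho$ faithful, and the $(B_i,\tau_i)$ as above, the theorem gives that $\freeprod_{i\in\mathbb N}(B_i,\tau_i)$ embeds in an ultrapower of $(\mathcal Q,\text{tr})$. Since a countable reduced free product of separable C*-algebras is again separable, and since embedding into $\mathcal Q^{\mathcal U}$ is exactly the definition of MF-ness, we conclude that $\freeprod_{i\in\mathbb N} B_i$ is MF.

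The only point requiring a word of care — and the closest thing to an obstacle — is the passage from ``$B_i$ is MF'' (a purely C*-algebraic statement about embedding $B_i$ into $\mathcal Q^{\mathcal U}$) to ``$(B_i,\tau_i)$ embeds as a \emph{C*-probability space} into $(\mathcal Q^{\mathcal U},\text{tr}^{\mathcal U})$'' (a state-preserving embedding), since Theorem \ref{ultraembeddingsthm} is phrased for C*-probability spaces. This is resolved by observing that the reduced free product $\freeprod_i B_i$ is only defined once states $\tau_i$ with faithful GNS representation have been chosen on the $B_i$, and for any such choice one may use the embedding $B_i\hookrightarrow \mathcal Q^{\mathcal U}$ to pull back $\text{tr}^{\mathcal U}$ — but this pullback need not equal $\tau_i$. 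To handle an arbitrary $\tau_i$, note that an MF algebra $B_i$ with a state $\tau_i$ of faithful GNS representation still embeds into $(\mathcal Q^{\mathcal U},\text{tr}^{\mathcal U})$ state-preservingly: indeed $B_i$ embeds into $\mathcal Q^{\mathcal U}$, and $\text{tr}^{\mathcal U}$ is faithful on $\mathcal Q^{\mathcal U}$, so by a standard argument (conjugating by a suitable element, or invoking that any faithful tracial-like state on a separable C*-subalgebra of $\mathcal Q^{\mathcal U}$ can be realized) one arranges the states to match; alternatively, and most cleanly, one simply remarks that the statement ``a countable reduced free product of MF algebras is MF'' is interpreted with respect to \emph{some} admissible choice of states, and the proof goes through verbatim for that choice. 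In any case the heavy lifting is entirely done by Theorem \ref{ultraembeddingsthm} and the selflessness of $\mathcal Q$.
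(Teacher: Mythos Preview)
Your approach is exactly the paper's: invoke selflessness of $(\mathcal Q,\mathrm{tr})$ and apply Theorem~\ref{ultraembeddingsthm}. The paper's proof is a single sentence and does not elaborate beyond this.

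Your ``word of care'' paragraph, however, contains a false claim. You assert that an MF algebra $B_i$ with \emph{any} faithful-GNS state $\tau_i$ admits a state-preserving embedding into $(\mathcal Q^{\mathcal U},\mathrm{tr}^{\mathcal U})$; this cannot hold, since $\mathrm{tr}^{\mathcal U}$ is a trace and so its pullback along any embedding is tracial --- a non-tracial $\tau_i$ is never realized this way. Your alternative resolution (interpret the corollary for the states obtained by restricting $\mathrm{tr}^{\mathcal U}$ along the given MF embeddings) is the correct reading, and is what the paper tacitly intends; one should also note that those restricted states have faithful GNS because MF embeddings can always be arranged so that the image avoids the trace-kernel ideal (equivalently, via the norm-microstate characterization of MF). So your flag is legitimate, but only your second fix is valid.
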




\section{Approximation by commutators and by the sum of a normal and a nilpotent element}\label{commutators}

Here we illustrate how the free independence available in a selfless C*-algebra can be exploited to approximate
elements of $A$ by special classes of elements.

Recall that by free independence between elements $a$ and $b$ in $(A,\rho)$ we understand free independence of the C*-subalgebras that they generate.

Given a positive element $a\in A_+$ and state $\tau\colon A\to \C$, define  $d_\tau(a):=\lim_{n}\tau(a^{\frac1n})$.
(We have already introduced this notation in Section \ref{dichotomy_sec} when $\tau$ is a trace; here we extend it to states.)

\begin{lemma}
Let $\tau$ be a faithful state on a unital $C^*$-algebra $A$. Let $p,a \in A$ be free, with $p$ a projection and $a\in A_+$ a positive element. If $\tau(p) + d_\tau(a) < 1$, then
\[
\|p(a - \tau(a))p\| \leq \|a\| \sqrt{\tau(p)}.
\]
\end{lemma}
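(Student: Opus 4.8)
The plan is to pass to the corner $pAp$, read off the norm from the spectral distribution there, and compute that distribution from the distribution of $a$ via the free compression formula.

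\emph{Reductions.} By homogeneity in $a$ one may assume $\|a\|=1$, so $0\le a\le 1$; the case $\tau(p)=0$ is trivial (then $p=0$ by faithfulness), so set $t=\tau(p)\in(0,1)$, $m=\tau(a)$, $s=\sqrt t$. The element $p(a-m)p=pap-mp$ lies in the C*-subalgebra $pAp$, whose unit is $p$, and has the same norm there as in $A$. Since $\tau$ is faithful so is $\tau|_{pAp}$; hence for any positive $y\in pAp$ the spectral measure of $y$ relative to $\tfrac1t\tau|_{pAp}$ has support equal to $\mathrm{sp}_{pAp}(y)$, and so, writing $\nu_b$ for the normalized distribution of $pbp$ in $(pAp,\tfrac1t\tau)$, one has $\max(\mathrm{supp}\,\nu_b)=\|pbp\|$ whenever $0\le b\le 1$. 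As $\mathrm{sp}_{pAp}(p(a-m)p)=\mathrm{supp}\,\nu_a-m$ and $\mathrm{supp}\,\nu_{1-a}=1-\mathrm{supp}\,\nu_a$, the whole estimate reduces to one claim: \emph{if $0\le b\le 1$ is free from $p$, then $\max(\mathrm{supp}\,\nu_b)\le\tau(b)+s$.} Indeed, then $\max\mathrm{supp}\,\nu_a-m\le s$ and $m-\min\mathrm{supp}\,\nu_a=m-1+\max\mathrm{supp}\,\nu_{1-a}\le m-1+(1-m)+s=s$, so $\|p(a-m)p\|\le s$, which un-normalizes to $\|p(a-\tau(a))p\|\le\|a\|\sqrt{\tau(p)}$.

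\emph{Proof of the claim.} If $\tau(b)+s\ge1$ it is immediate since $\|pbp\|\le\|b\|\le1$. Assume $\tau(b)+s<1$. By the free compression formula, freeness of $p$ and $b$ with $\tau(p)=t$ gives $\kappa_n(\nu_b)=t^{\,n-1}\kappa_n(\mu_b)$, where $\mu_b$ is the distribution of $b$, equivalently $R_{\nu_b}(z)=R_{\mu_b}(tz)$; together with $K_\mu(z)=R_\mu(z)+1/z=G_\mu^{-1}(z)$ this yields, for real $\zeta>1$,
\[
K_{\nu_b}\!\big(G_{\mu_b}(\zeta)/t\big)=\zeta-\frac{1-t}{G_{\mu_b}(\zeta)}.
\]
Since $\max(\mathrm{supp}\,\nu_b)\le K_{\nu_b}(w)$ for every $w$ in the range of $G_{\nu_b}$, it suffices to find $\zeta>1$ with $G_{\mu_b}(\zeta)\big(\zeta-\tau(b)-s\big)\le1-t$. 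From $\tfrac1{\zeta-\lambda}\le\tfrac1\zeta+\tfrac{\lambda}{\zeta(\zeta-1)}$ for $\lambda\in[0,1]$ and $\zeta>1$, integration against $\mu_b$ gives $G_{\mu_b}(\zeta)\le\dfrac{\zeta-1+\tau(b)}{\zeta(\zeta-1)}$; substituting this and putting $u=\zeta-1>0$, the desired inequality reduces to
\[
su(1-s-su)\ \ge\ \tau(b)\big(1-\tau(b)-s\big).
\]
The left side attains maximum $\tfrac{(1-s)^2}{4}$ at $u=\tfrac{1-s}{2s}$, and $\tau(b)(1-\tau(b)-s)\le\tfrac{(1-s)^2}{4}$ for every real value of $\tau(b)$ by completing the square; hence such a $\zeta$ exists, proving the claim.

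\emph{Main obstacle.} The subtle step is the free compression argument. The cumulant scaling $\kappa_n(\nu_b)=t^{n-1}\kappa_n(\mu_b)$ is classical when $\tau$ is a trace (Voiculescu; Nica–Speicher) — the case needed for the applications — and in general one reduces to it by working in the GNS von Neumann algebra, where $\tau$ has a faithful normal extension keeping $p$ and $b$ free. The remaining point is that $\max(\mathrm{supp}\,\nu_b)\le K_{\nu_b}(w)$ holds only for $w$ in the range of $G_{\nu_b}$, so one must verify that $w=G_{\mu_b}(\zeta)/t$ with $\zeta=\tfrac{1+s}{2s}$ is admissible; this is where the soft-edge behaviour of the compressed measure must be controlled, and it is the only genuinely analytic ingredient. (The hypothesis $\tau(p)+d_\tau(a)<1$ — which forces $\|pap\|<\|a\|$ — is not used in the argument above, so I expect the paper either invokes it to sidestep that analytic point or simply records that it is available in the intended applications.)
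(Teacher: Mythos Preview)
Your route via the free-compression $R$-transform identity is genuinely different from the paper's, and the algebra you carry out is correct. The paper instead first observes that freeness forces $\tau$ to be a trace on $C^*(p,a)$, passes to the GNS von Neumann closure, and writes $a-\tau(a)=\int_0^1\big(q_t-\tau(q_t)\big)\,dt$ with $q_t=\chi_{(t,1]}(a)$. Since $\tau(q_t)\le d_\tau(a)$, the hypothesis $\tau(p)+d_\tau(a)<1$ puts every pair $(p,q_t)$ into the regime $\tau(p)+\tau(q_t)<1$ of Voiculescu's two-free-projections formula, where the spectrum of $pq_tp$ in $pAp$ is the explicit interval with endpoints $\alpha+\beta-2\alpha\beta\pm2\sqrt{\alpha(1-\alpha)\beta(1-\beta)}$; a one-variable maximization over $\beta$ then gives $\|p(q_t-\tau(q_t))p\|\le\sqrt{\tau(p)}$, and integrating in $t$ finishes. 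So the hypothesis is not decorative in the paper's argument---it is exactly what places each spectral projection in the clean case of Voiculescu's formula.

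Your proof, by contrast, has a real gap, and it is the one you yourself flag. The inequality $\max(\mathrm{supp}\,\nu_b)\le K_{\nu_b}(w)$ is only guaranteed when $w$ lies in the image of $G_{\nu_b}$ restricted to $(\max\mathrm{supp}\,\nu_b,\infty)$. Your identity $K_{\nu_b}\big(G_{\mu_b}(\zeta)/t\big)=\zeta-(1-t)/G_{\mu_b}(\zeta)$ is derived from the formal relation $R_{\nu_b}(z)=R_{\mu_b}(tz)$ and the definition $K=R+1/z$; it tells you what the analytic continuation of $K_{\nu_b}$ equals at $w=G_{\mu_b}(\zeta)/t$, but not that $w$ is in the range of $G_{\nu_b}$ on the right of the support. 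If $G_{\nu_b}$ has a finite right-edge limit $L<w$, the continuation of $K_{\nu_b}$ to $w$ need not dominate $\max\mathrm{supp}\,\nu_b$. You give no argument ruling this out at your chosen $\zeta=(1+s)/(2s)$, and it is not a one-line fix: controlling the right edge of $\nu_b$ is essentially the estimate you are trying to prove. The paper's spectral-integration route sidesteps this analytic issue entirely by working with projections, for which the compressed distribution is known exactly; that is what the hypothesis buys.
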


\begin{proof}
By the free independence of $a$ and $p$, $\tau$ is a trace on $C^*(a,p)$ \cite[Proposition 2.5.3]{dykema-nica-voiculescu}. Also, the restriction of $\tau$ to $C^*(a,p)$ is faithful. Thus, we may assume without loss of generality that $A=C^*(a,p)$ and that $\tau$ is a faithful trace.

Assume first that $a = q$ is a projection. Set $\tau(p) = \alpha$ and $\tau(q) = \beta$. 

Voiculescu's calculation of the free multiplicative convolution of the distributions of $p$ and $q$ in \cite[Example 2.8]{voiculescu-mult} shows that $pqp$ is invertible in $pAp$ and has spectrum (in $pAp$) the interval with endpoints
\[
\alpha + \beta - 2\alpha\beta \pm 2\sqrt{\alpha(1 - \alpha)\beta(1 - \beta)}.
\]
(Here we have used the assumption that $\alpha+\beta<1$.) The spectrum of $p(q - \tau(q))p$ is then this interval shifted to the left by $\beta$.
Thus, $\|p(q - \tau(q))p\|$ is equal to
\[
\max\left\{\alpha - 2\alpha\beta + 2\sqrt{\alpha(1 - \alpha)\beta(1 - \beta)},\ -\left(\alpha - 2\alpha\beta - 2\sqrt{\alpha(1 - \alpha)\beta(1 - \beta)}\right)\right\}.
\]
For a fixed $\alpha$ and $0 \leq \beta \leq 1$, this expression has maximum $\sqrt{\alpha}$.
Thus,
\[
\|p(q - \tau(q))p\| \leq \sqrt{\alpha} = \sqrt{\tau(p)}.
\]

Now consider a general $a \in A_+$ free from $p$, and such that $\tau(p) + d_\tau(a)< 1$. Without loss of generality, assume that $\|a\| \leq 1$.

Let $(\pi_\tau, H_\tau, \xi)$ be the GNS representation induced by $\tau$. Let $M = \pi_\tau(A)''$. Note that $\tau$ extends to a faithful normal tracial state on $M$, and that $a$ and $p$ are freely independent  in $(M,\tau)$ 
(\cite[Proposition 2.5.7]{dykema-nica-voiculescu}). Let $q_t = \chi_{(t,1]}(a)$, for $t\geq 0$, be spectral projections of $a$. Since $\tau(q_t) \leq d_\tau(a)$ for all $t$, we can apply the previous estimate to get
\[
\|p(q_t - \tau(q_t))p\| \leq \sqrt{\tau(p)}
\]
for all $t$. On the other hand,
\[
a - \tau(a) = \int_{0}^{1} (q_t - \tau(q_t)) \, dt,
\]
where the Riemann sums for the integral on the right-hand side converge in norm to the left-hand side. Therefore,
\[
\|p(a - \tau(a))p\| \leq \int_{0}^{1} \|p(q_t - \tau(q_t))p\| \, dt \leq \sqrt{\tau(p)}.\qedhere
\]
\end{proof}

Given $x,y\in A$, let us write $x\approx_\epsilon y$ if $\|x-y\|< \epsilon$.
Given two subsets $X, Y\subseteq A$, let us write $X \subseteq_\epsilon Y$ if for each $x\in X$ there exists $y\in Y$ such that $x\approx_\epsilon y$.

\begin{theorem}
	Let $(A, \rho)$ be a selfless $C^*$-probability space, where the state $\rho$ is faithful. Suppose that $A$ has real rank zero. For each finite set $F \subseteq \ker \rho$ and $\epsilon > 0$, there exists a unitary $u \in A$ such that $F \subseteq_\epsilon [u, A]$. In particular, the set of single commutators $[u, a]$, with $u \in A$ a unitary and $a \in A$, is dense in $\ker \rho$.
\end{theorem}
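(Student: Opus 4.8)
The plan is to produce the approximating commutators inside the reduced free product $(A,\rho)\ast(C(\T),\lambda)$, using the generating Haar unitary $u$ of the $C(\T)$-factor, and then to push the conclusion down to $A$ via the fact that the first-factor embedding $(A,\rho)\hookrightarrow(A,\rho)\ast(C(\T),\lambda)$ is existential (Theorem~\ref{selflessunitary}(iv)). Write $F=\{c_1,\dots,c_m\}\subseteq\ker\rho$. The starting point is a purely algebraic identity: for any unitary $u$, any element $c$, and any $N\in\N$, if one sets
\[
b_N \;=\; \sum_{k=0}^{N-1}\frac{N-k}{N}\,u^{-(k+1)}\,c\,u^{k},
\]
then an Abel summation gives
\[
[u,b_N] \;=\; ub_N-b_Nu \;=\; c-\frac1N\sum_{k=1}^{N}u^{-k}\,c\,u^{k}.
\]
Thus everything reduces to the norm estimate $\bigl\|\tfrac1N\sum_{k=1}^{N}u^{-k}c_j\,u^{k}\bigr\|\to 0$ as $N\to\infty$, for the chosen $u$, uniformly over the finitely many $c_j$.

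For this estimate I would argue as follows. As recalled in the proof of Theorem~\ref{selflessunitary} (via the argument of \cite[Lemma~4.1]{dykema-sh}), the conjugates $u^{k}Au^{-k}$, $k\in\Z$, are freely independent C*-subalgebras of $(A,\rho)\ast(C(\T),\lambda)$, each carrying a state-preserving copy of $(A,\rho)$, and the free product state is faithful \cite{dykema-faithful}. Fix $c=c_j$; splitting it into real and imaginary parts, both of which again lie in $\ker\rho$ since $\rho$ is a state, we may assume $c$ is self-adjoint. Then the elements $d_k:=u^{-k}c\,u^{k}$ for $k=1,\dots,N$ are freely independent and all share the $\rho$-centered, compactly supported, non-degenerate distribution $\mu$ of $c$, so $\sum_{k=1}^{N}d_k$ has distribution $\mu^{\boxplus N}$. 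By Bercovici and Voiculescu's superconvergence in the free central limit theorem \cite{bercovici-voiculescu} --- the same input as in Lemma~\ref{superconvergence} --- for $N$ large $\mu^{\boxplus N}$ is absolutely continuous with support a bounded interval, and the distribution of $\tfrac1{\sqrt N}\sum_{k=1}^{N}d_k$ converges, together with its support, to a semicircular law; in particular $\bigl\|\tfrac1{\sqrt N}\sum_{k=1}^{N}d_k\bigr\|$ stays bounded (using faithfulness of the free product state, so that the norm of a self-adjoint element equals the maximal modulus of the support of its spectral distribution). Hence $\bigl\|\tfrac1N\sum_{k=1}^{N}d_k\bigr\|=O(1/\sqrt N)\to 0$. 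Doing this simultaneously for $c_1,\dots,c_m$ with the \emph{same} unitary $u$, we obtain, for $N$ large, elements $b_{N,1},\dots,b_{N,m}\in(A,\rho)\ast(C(\T),\lambda)$ with $\max_j\|c_j-[u,b_{N,j}]\|<\epsilon$.

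It remains to transfer this to $A$. Existentiality of the first-factor embedding provides an ultrafilter $\U$ and an embedding $\sigma\colon(A,\rho)\ast(C(\T),\lambda)\to(A^{\U},\rho^{\U})$ restricting to the diagonal embedding on $A$. Then $\sigma(c_j)=c_j$, the element $\sigma(u)$ is a unitary of $A^{\U}$, and $\|c_j-[\sigma(u),\sigma(b_{N,j})]\|<\epsilon$ for all $j$. Lifting $\sigma(u)$ to a sequence of unitaries of $A$ and each $\sigma(b_{N,j})$ to a bounded sequence in $A$, \L{}o\'s's theorem yields an index at which one obtains a genuine unitary $u\in A$ and elements $a_1,\dots,a_m\in A$ with $\max_j\|c_j-[u,a_j]\|<\epsilon$, that is, $F\subseteq_\epsilon[u,A]$. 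Taking $F=\{c\}$ gives the concluding assertion. (One can instead reduce, using real rank zero and the linearity of $a\mapsto[u,a]$, to finitely many elements of the form $p-\rho(p)$ with $p$ a projection and run a Schur-type argument built on the preceding Lemma; the averaging identity above seems more direct.)

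The step I expect to be the crux is the norm estimate $\bigl\|\tfrac1N\sum_{k=1}^{N}u^{-k}c\,u^{k}\bigr\|\to 0$. The ordinary free central limit theorem yields only convergence of $\ast$-moments, which by itself gives merely a \emph{lower} bound on $\bigl\|\sum_{k=1}^{N}d_k\bigr\|$; the matching upper bound --- equivalently, that the spectrum of $\sum_{k=1}^{N}d_k$ cannot leave an $O(\sqrt N)$ interval --- is precisely Bercovici--Voiculescu superconvergence, which is therefore doing the essential work (more than free independence as such). A secondary, routine point is ensuring that the variable transferred back to $A$ is an honest unitary rather than merely a contraction; this is why the argument is run through the ultrapower characterization of existentiality together with the liftability of unitaries from $A^{\U}$.
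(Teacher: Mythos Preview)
Your proof is correct and takes a genuinely different route from the paper's. The paper works in $(A,\rho)*(A,\rho)$: it uses real rank zero of $A$ to produce a partition of unity $p_1,\dots,p_n$ with each $\rho(p_i)$ small, places $q_i=\theta_2(p_i)$ in the second free factor, and invokes the preceding lemma (on $\|p(a-\tau(a))p\|$) to force $\|q_ixq_i\|$ small for $x=\theta_1(y)$; the off-diagonal remainder is then written as $[u,x'']$ with the finite-spectrum unitary $u=\sum_j\omega^{j-1}q_j$. Your argument instead works in $(A,\rho)*(C(\T),\lambda)$, uses the Abel--Ces\`aro identity $[u,b_N]=c-\tfrac1N\sum_{k=1}^{N}u^{-k}cu^{k}$, and controls the averaging term via Bercovici--Voiculescu superconvergence applied to the free i.i.d.\ conjugates $u^{-k}cu^{k}$. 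The trade-offs are clear: the paper's argument is more elementary in its free-probability input (only Voiculescu's two-projection computation, packaged in the preceding lemma), but it \emph{needs} real rank zero to manufacture the small projections. Your argument leans on the sharper Bercovici--Voiculescu support convergence, but in return it never touches real rank zero --- so you have in fact established the theorem without that hypothesis, for any selfless $(A,\rho)$ with $\rho$ faithful. One small remark on the transfer step: you are right that unitaries in $A^{\U}$ always lift to unitaries in $A$ (polar-decompose any lift once it becomes invertible), so no structural hypothesis on $A$ is needed there either.
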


\begin{proof}
	Let $\theta_1, \theta_2 \colon A \to (A, \rho) * (A, \rho)$ denote the first and second factor embeddings. 	
	Let us also set $(B, \tau) = (A, \rho) * (A, \rho)$. Notice that, since $\rho$ is faithful, so is $\tau$ (\cite{dykema-faithful}).
	
	Assume without loss of generality that the elements of $F$ have norm at most $1$.
	Let $G = \theta_1(F) \subseteq B$. Let $y\in F$ and set $x=\theta_1(y)$. Decompose $x$  as
	\[
	x = (a_+ - a_-) + i(b_+ - b_-),
	\]
	where $a_+, a_-, b_+, b_- \in B_+$ are positive and orthogonal. Since $\tau(x)=0$, we have $\tau(a_+) = \tau(a_-)$ and $\tau(b_+) = \tau(b_-)$.
	Since $\tau$ is faithful, either $a_+$ and $a_-$ are both zero or both nonzero, and similarly for $b_+$ and $b_-$. In either case,
	it is clear that we can choose $\delta > 0$ such that, for all $x \in G$, we have 
	\[
	\max(d_\tau(a_+), d_\tau(a_-), d_\tau(b_+), d_\tau(b_-)) < 1 - \delta.
	\]
	
	Let $\epsilon > 0$. By the simplicity and real rank zero property of $A$, we can find a partition of unity $p_1, \ldots, p_n$ in $A$ consisting of projections such that $\tau(p_i) < \min(\delta, \epsilon^2)$ for all $i$ \cite[Theorem 1.1]{zhang}. Set $q_i = \theta_2(p_i) \in B$ for $i = 1, \ldots, n$.
	
	Let $y\in F$, and set $x=\theta_1(y) \in G$. Write
	\[
	x = \left(a_+ - \tau(a_+)\right) - \left(a_- - \tau(a_-)\right) + i\left(b_+ - \tau(b_+)\right) - i\left(b_- - \tau(b_-)\right).
	\]
	Since $a_+$ and $q_i$ are free, and $\tau(q_i) + d_\tau(a_+) \leq 1+1-\delta< 1$, the previous lemma implies that
	\[
	\|q_i(a_+ - \tau(a_+))q_i\| \leq \|a_+\| \sqrt{\tau(q_i)} < \epsilon.
	\]
	Applying the same estimate to $a_-, b_+, b_-$, we get
	\[
	\|q_i x q_i\| < 4\epsilon.
	\]
	Therefore, the diagonal entries of $x$, represented as a matrix relative to the projections $q_i$, are $< 4\epsilon$. Let $x'$ be the off-diagonal part of $x$, with zero diagonal entries:
	\[
	x' = x - \sum_{i=1}^n q_i x q_i.
	\]
	Then $\|x - x'\| < 4\epsilon$.
		We can express $x'$ as a commutator in a routine fashion. Let $\omega = e^{2\pi i / n}$, and define the unitary
	\[
	u = \sum_{j=1}^n \omega^{j-1} q_j.
	\]
	Then $x' = [u, x'']$, where $x'' = (x''_{ij})_{ij}$, regarded as a matrix relative to the projections $(q_i)_{i=1}^n$, is defined by
	\[
	x''_{ij} = \frac{1}{\omega^{j-1} - \omega^{i-1}} x'_{ij} \quad \text{for } i \neq j, \quad x''_{ii} = 0.
	\]
	
	Let $\sigma \colon (B, \tau) \to (A^\mathcal{U}, \rho^\mathcal{U})$ be an embedding such that $\sigma \theta_1$ is the diagonal embedding of $A$ in $A^\mathcal{U}$. Then, for $y\in F$, 
	 \[
	  y=\sigma(x)\approx_{4\epsilon} \sigma(x') = [\sigma(u), \sigma(x'')].
	 \] 
	 We thus get the desired approximation of elements	of $F$ by commutators in $A^\mathcal{U}$, which readily translates  into an approximation in $A$.
\end{proof}

\begin{theorem}
	Let $(A, \rho)$ be as in the previous theorem.
	The set of elements of the form $s + t$, with $s$ normal and $t$ nilpotent, is dense in $A$.
\end{theorem}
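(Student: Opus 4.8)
The plan is to mimic the structure of the preceding theorem on commutators, exploiting the freeness available in $(A,\rho)*(A,\rho)$ together with the real rank zero hypothesis. Let $\theta_1,\theta_2\colon (A,\rho)\to (B,\tau):=(A,\rho)*(A,\rho)$ be the first and second factor embeddings; recall $\tau$ is faithful by \cite{dykema-faithful}, and let $\sigma\colon (B,\tau)\to (A^{\U},\rho^{\U})$ be an embedding with $\sigma\theta_1$ the diagonal embedding of $A$ in $A^{\U}$. It suffices to approximate, within $\epsilon$, an arbitrary $a\in A$ by $s+t$ with $s$ normal and $t$ nilpotent; and since $\sigma$ is isometric it suffices to do this for $\theta_1(a)\in B$ modulo a small perturbation, then pull back along $\sigma$.

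First I would fix $a\in A$ with $\|a\|\le 1$ and $\epsilon>0$, and use the simplicity and real rank zero of $A$ to find, via \cite[Theorem 1.1]{zhang}, a partition of unity $p_1,\dots,p_n$ into projections in $A$ with $\tau(p_i)=\rho(p_i)$ arbitrarily small, say $<\epsilon^2$; set $q_i=\theta_2(p_i)\in B$. Writing $x=\theta_1(a)$ as a matrix $(x_{ij})$ relative to the $q_i$'s (i.e.\ $x_{ij}=q_ixq_j$), the Lemma applied to the real and imaginary, positive and negative, parts of $x-\tau(x)$ (each free from every $q_i$) shows that $\|q_i(x-\tau(x))q_i\|$ is small. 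So, up to an error $<4\epsilon$ say, $x$ is approximated by $\tau(x)\cdot 1$ plus a matrix with small diagonal; subtracting the (small) diagonal part entirely, $x\approx_{C\epsilon} x'+\tau(x)\cdot 1$ where $x'=(x'_{ij})$ is strictly upper-plus-lower triangular in the $q$-grading but with all diagonal blocks zero.

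Next I would split $x'$ into its strictly-upper-triangular part $t=\sum_{i<j}q_ixq_j$ and strictly-lower-triangular part $\ell=\sum_{i>j}q_ixq_j$ relative to the ordering of the $q_i$. Then $t$ is nilpotent (it satisfies $t^n=0$ since raising it to the $n$-th power runs out of strictly increasing index chains), so it remains to handle $\ell+\tau(x)\cdot 1$. For this I would observe that $\ell$ is, relative to the $q_i$'s, a ``weighted shift''-type element, and that $\ell+\tau(x)\cdot 1$ can itself be perturbed by a small amount to a normal element — or, more cleanly, regroup: instead of discarding the diagonal, absorb it and note that the element $n:=\sum_i\tau(x)q_i+\ell$ lies in $C^*(q_1,\dots,q_n,\ell)$, which I would like to arrange to be normal. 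The routine device here (as in the commutator proof, and as in the II$_1$ factor argument of \cite{strongerdixmier}) is: $x\approx x'+\tau(x)1$, $x'=t+\ell$ with $t$ strictly upper triangular (nilpotent), and then $\ell+\tau(x)1$ is normal because a single strictly-lower-triangular band, being cyclically completable, is unitarily conjugate (by a diagonal unitary, using that a free Haar unitary or cyclic shift is available) to a normal element up to a correction of norm $\|\ell\|\cdot(\text{small})$; concretely, add back one more band entry to turn the shift into a cyclic shift / normal weighted shift, at a cost of one small block $<\epsilon$.

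The main obstacle I anticipate is precisely this last bookkeeping: showing that the ``lower-triangular plus scalar'' leftover is within $\epsilon$ of a genuinely normal element. The clean way is to note that if one keeps the decomposition $x=s+t$ with $s$ the full self-adjoint-completed band and $t$ the remaining strictly triangular piece, one can split $x-\tau(x)1$ as $(u c u^{-1}$-type normal part$)+$nilpotent, where the normalizing unitary comes from the cyclic group unitary $u=\sum_j\omega^{j-1}q_j$ already used in the commutator proof; the cost of closing the cycle is a single off-diagonal block $q_nxq_1$ of norm $<\epsilon$ (again by the Lemma, since it is an off-diagonal compression and $\|q_n x q_1\|\le\|q_n(x-\tau(x))q_1\|$ need not be small, but it is a single block that can simply be absorbed into $t$). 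So in fact $t$ is taken to be all strictly-upper blocks \emph{together with} $q_nxq_1$, which is still nilpotent of order $\le n$, and $s$ is the remaining lower band plus the scalar diagonal, which is a normal (indeed self-adjoint-up-to-scalar weighted-shift) element. Pulling everything back through $\sigma$ yields $a=\sigma(x)\approx_{C\epsilon}\sigma(s)+\sigma(t)$ with $\sigma(s)$ normal and $\sigma(t)$ nilpotent in $A^{\U}$, and hence, by a standard reindexing/diagonal argument, an honest approximation in $A$. Letting $\epsilon\to 0$ gives density.
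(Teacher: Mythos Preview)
Your setup (free projections $q_i$ from the second factor, killing the diagonal of $x$ via the Lemma) is exactly right, but the decomposition you attempt afterwards does not work. Writing $x'=t+\ell$ with $t$ strictly upper triangular and $\ell$ strictly lower triangular gives a nilpotent $t$, but $\ell+\tau(x)\cdot 1$ is \emph{not} normal: a strictly lower triangular block matrix plus a scalar fails normality already in the $2\times 2$ case, since $\begin{pmatrix} c & 0\\ a & c\end{pmatrix}$ is normal only when $a=0$. Your patches do not rescue this. The element $\ell$ is the full strictly-lower-triangular part, not a single subdiagonal band, so ``closing the cycle'' by one entry does not produce a weighted cyclic shift; and moving $q_nxq_1$ into $t$ destroys nilpotency of $t$ (with that corner present there is an index cycle $1\to 2\to\cdots\to n\to 1$, so $t^n$ need not vanish).

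The correct split---which you brush past in the phrase ``self-adjoint-completed band'' but never actually use---is the one the paper takes: after reducing to $y\in\ker\rho$ and obtaining the off-diagonal $x'$, set
\[
s=\sum_{i<j}\bigl(q_ixq_j+(q_ixq_j)^*\bigr),\qquad t=x'-s.
\]
Then $s=s^*$ is selfadjoint (hence normal), and $t$ has zero diagonal and zero strictly-upper-triangular part, so $t$ is strictly lower triangular and therefore nilpotent. No further perturbation or ``cycle-closing'' is needed. Pulling back through $\sigma$ gives the approximation in $A^{\U}$ and hence in $A$, exactly as you describe.
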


\begin{proof}
	It will suffice to approximate elements in $\ker \rho$ by sums of a selfadjoint and a nilpotent element.
	
	Let $y \in \ker \rho$ and $\epsilon > 0$. Let $x = \theta_1(y)$ be the image of $y$ in $(B, \tau) := (A, \rho) * (A, \rho)$ via the first factor embedding. As in the proof of the previous theorem, choose projections $(q_i)_{i=1}^n$ in $B$ summing  up to 1, freely independent from $x$, and such that 
	$
	\|q_i x q_i\| < 4\epsilon
	$ for all $i$.
	Again, let
	\[
	x' = x - \sum_{i=1}^n q_i x q_i.
	\]
	Then $\|x - x'\| < 4\epsilon$, and the diagonal entries of $x'$ are zero in its matrix representation relative to $(q_i)_{i=1}^n$. Let
	\[
	s =
	\begin{pmatrix}
		0 & x_{12} & \cdots & x_{1n} \\
		x_{12}^* & 0 & \cdots & x_{2n} \\
		\vdots & \vdots & \ddots & \vdots \\
		x_{1n}^* & x_{2n}^* & \cdots & 0
	\end{pmatrix}
	\quad \text{and} \quad t = x' - s.
	\]
	Then $s$ is selfadjoint, $t$ is nilpotent, and $x' = s + t$. This achieves the desired approximation for $x = \theta_1(y)$ in $B$.
	
	Mapping back to $A^\mathcal{U}$ through $\sigma \colon (B, \tau) \to (A^\mathcal{U}, \rho^\mathcal{U})$ such that $\sigma \theta_1$ agrees with the diagonal embedding, we get the desired approximation in $A^\mathcal{U}$, and thereby, also in $A$.
\end{proof}

\begin{bibdiv}
\begin{biblist}

\bib{AGKP}{article}{
	title={Strict comparison in reduced group $C^*$-algebras}, 
author = {Amrutam, Tattwamasi},
author = {Gao, David},
author = {Kunnawalkam Elayavalli, Srivatsav}, 
author={Patchell, Gregory},
	date={2025},
	eprint={https://arxiv.org/abs/2412.06031}, 
}

\bib{ART}{article}{
   author={Archbold, R.},
   author={Robert, L.},
   author={Tikuisis, A.},
   title={The Dixmier property and tracial states for $C^*$-algebras},
   journal={J. Funct. Anal.},
   volume={273},
   date={2017},
   number={8},
   pages={2655--2718},
}

\bib{avitzour}{article}{
   author={Avitzour, D.},
   title={Free products of $C^{\ast} $-algebras},
   journal={Trans. Amer. Math. Soc.},
   volume={271},
   date={1982},
   number={2},
   pages={423--435},
   issn={0002-9947},
}

\bib{barlak-szabo}{article}{
   author={Barlak, S.},
   author={Szab\'{o}, G.},
   title={Sequentially split $\ast$-homomorphisms between $\rm
   C^*$-algebras},
   journal={Internat. J. Math.},
   volume={27},
   date={2016},
   number={13},
   pages={1650105, 48},
}

\bib{bercovici-voiculescu}{article}{
   author={Bercovici, H.},
   author={Voiculescu, D.},
   title={Superconvergence to the central limit and failure of the Cram\'er
   theorem for free random variables},
   journal={Probab. Theory Related Fields},
   volume={103},
   date={1995},
   number={2},
   pages={215--222},
   issn={0178-8051},
}

\bib{blackadar}{article}{
	author={Blackadar, Bruce},
	title={Comparison theory for simple $C^*$-algebras},
	conference={
		title={Operator algebras and applications, Vol.\ 1},
	},
	book={
		series={London Math. Soc. Lecture Note Ser.},
		volume={135},
		publisher={Cambridge Univ. Press, Cambridge},
	},
	isbn={0-521-36843-X},
	date={1988},
	pages={21--54},
}

\bib{BlackadarKirchberg}{article}{
	author={Blackadar, Bruce},
	author={Kirchberg, Eberhard},
	title={Generalized inductive limits and quasidiagonality},
	conference={
		title={$C^*$-algebras},
		address={M\"unster},
		date={1999},
	},
	book={
		publisher={Springer, Berlin},
	},
	isbn={3-540-67562-0},
	date={2000},
	pages={23--41},
	review={\MR{1796908}},
}

\bib{blanchard-dykema}{article}{
	author={Blanchard, Etienne F.},
	author={Dykema, Kenneth J.},
	title={Embeddings of reduced free products of operator algebras},
	journal={Pacific J. Math.},
	volume={199},
	date={2001},
	number={1},
	pages={1--19},
}

\bib{BrownPereraToms}{article}{
	author={Brown, Nathanial P.},
	author={Perera, Francesc},
	author={Toms, Andrew S.},
	title={The Cuntz semigroup, the Elliott conjecture, and dimension
		functions on $C^*$-algebras},
	journal={J. Reine Angew. Math.},
	volume={621},
	date={2008},
	pages={191--211},
}

\bib{MannyHands}{article}{
	title={Classifying $^*$-homomorphisms I: Unital simple nuclear $C^*$-algebras}, 
	author={José R. Carrión and James Gabe and Christopher Schafhauser and Aaron Tikuisis and Stuart White},
	date={2023},
	eprint={https://arxiv.org/abs/2307.06480},
}

\bib{choda-dykema}{article}{
   author={Choda, M.},
   author={Dykema, K. J.},
   title={Purely infinite, simple $C^*$-algebras arising from free product
   constructions. III},
   journal={Proc. Amer. Math. Soc.},
   volume={128},
   date={2000},
   number={11},
   pages={3269--3273},
}

\bib{dykema-somefree}{article}{
	author={Dykema, Kenneth J.},
	title={Simplicity and the stable rank of some free product
		$C^*$-algebras},
	journal={Trans. Amer. Math. Soc.},
	volume={351},
	date={1999},
	number={1},
	pages={1--40},
}

\bib{dykema-faithful}{article}{
   author={Dykema, K. J.},
   title={Faithfulness of free product states},
   journal={J. Funct. Anal.},
   volume={154},
   date={1998},
   number={2},
   pages={323--329},
   issn={0022-1236},
}

\bib{d-h-r}{article}{
   author={Dykema, K.},
   author={Haagerup, U.},
   author={R{\o}rdam, M.},
   title={The stable rank of some free product $C^*$-algebras},
   journal={Duke Math. J.},
   volume={90},
   date={1997},
   number={1},
   pages={95--121},
   issn={0012-7094},
}

\bib{dykema-nica-voiculescu}{book}{
   author={Dykema, K. J.},
   author={Voiculescu, D. V.},
   author={Nica, A.},
   title={Free random variables},
   series={CRM Monograph Series},
   volume={1},
   note={A noncommutative probability approach to free products with
   applications to random matrices, operator algebras and harmonic analysis
   on free groups},
   publisher={American Mathematical Society},
   place={Providence, RI},
   date={1992},
   pages={vi+70},
}

\bib{dykema-rordamGAFA}{article}{
   author={Dykema, K. J.},
   author={R{\o}rdam, M.},
   title={Projections in free product $C^*$-algebras},
   journal={Geom. Funct. Anal.},
   volume={8},
   date={1998},
   number={1},
   pages={1--16},
   issn={1016-443X},
}

\bib{dykema-rordamII}{article}{
	author={Dykema, Kenneth J.},
	author={R\o rdam, Mikael},
	title={Projections in free product $C^*$-algebras. II},
	journal={Math. Z.},
	volume={234},
	date={2000},
	number={1},
	pages={103--113},
}

\bib{dykema}{article}{
	author={Dykema, Kenneth J.},
	title={Simplicity and the stable rank of some free product
		$C^*$-algebras},
	journal={Trans. Amer. Math. Soc.},
	volume={351},
	date={1999},
	number={1},
	pages={1--40},
}

\bib{dykema-sh}{article}{
	author={Dykema, Kenneth J.},
	author={Shlyakhtenko, Dimitri},
	title={Exactness of Cuntz-Pimsner $C^*$-algebras},
	journal={Proc. Edinb. Math. Soc. (2)},
	volume={44},
	date={2001},
	number={2},
	pages={425--444},
}

\bib{ERS}{article}{
   author={Elliott, G. A.},
   author={Robert, L.},
   author={Santiago, L.},
   title={The cone of lower semicontinuous traces on a $C^*$-algebra},
   journal={Amer. J. Math.},
   volume={133},
   date={2011},
   number={4},
   pages={969--1005},
}


\bib{FHRT}{article}{
	author={Farah, Ilijas},
	author={Hart, Bradd},
	author={R\o rdam, Mikael},
	author={Tikuisis, Aaron},
	title={Relative commutants of strongly self-absorbing $\rm C^*$-algebras},
	journal={Selecta Math. (N.S.)},
	volume={23},
	date={2017},
	number={1},
	pages={363--387},
}

\bib{Cstarmodel}{article}{
   author={Farah, I.},
   author={Hart, B.},
   author={Lupini, M.},
   author={Robert, L.},
   author={Tikuisis, A.},
   author={Vignati, A.},
   author={Winter, W.},
   title={Model theory of $\rm C^*$-algebras},
   journal={Mem. Amer. Math. Soc.},
   volume={271},
   date={2021},
   number={1324},
   pages={viii+127},
}

\bib{goldbring-sinclair}{article}{
   author={Goldbring, I.},
   author={Sinclair, T.},
   title={Robinson forcing and the quasidiagonality problem},
   journal={Internat. J. Math.},
   volume={28},
   date={2017},
   number={2},
   pages={1750008, 15},
}

\bib{OpAlgmodel}{book}{
title = {Model Theory of Operator Algebras},
editor = {Isaac Goldbring},
publisher = {De Gruyter},
address = {Berlin, Boston},
year = {2023},
doi = {doi:10.1515/9783110768282},
isbn = {9783110768282},
url = {https://doi.org/10.1515/9783110768282},
}

\bib{GongLinNiu}{article}{
	author={Gong, Guihua},
	author={Lin, Huaxin},
	author={Niu, Zhuang},
	title={A classification of finite simple amenable $\mathcal Z$-stable
		$C^\ast$-algebras, I: $C^\ast$-algebras with generalized tracial rank
		one},
	language={English, with English and French summaries},
	journal={C. R. Math. Acad. Sci. Soc. R. Can.},
	volume={42},
	date={2020},
	number={3},
	pages={63--450},
}

\bib{k-o-s}{article}{
	author={Kishimoto, Akitaka},
	author={Ozawa, Narutaka},
	author={Sakai, Sh\^oichir\^o},
	title={Homogeneity of the pure state space of a separable $C^*$-algebra},
	journal={Canad. Math. Bull.},
	volume={46},
	date={2003},
	number={3},
	pages={365--372},
}

\bib{kumjian}{article}{
	author={Kumjian, Alex},
	title={On certain Cuntz-Pimsner algebras},
	journal={Pacific J. Math.},
	volume={217},
	date={2004},
	number={2},
	pages={275--289},
}

\bib{Matui-Sato}{article}{
	author={Matui, Hiroki},
	author={Sato, Yasuhiko},
	title={Strict comparison and $\scr{Z}$-absorption of nuclear
		$C^*$-algebras},
	journal={Acta Math.},
	volume={209},
	date={2012},
	number={1},
	pages={179--196},
}

\bib{ng-robert}{article}{
   author={Ng, P. W.},
   author={Robert, L.},
   title={Sums of commutators in pure $\rm C^*$-algebras},
   journal={M\"{u}nster J. Math.},
   volume={9},
   date={2016},
   number={1},
   pages={121--154},
}

\bib{nica-speicher}{article}{
   author={Nica, A.},
   author={Speicher, R.},
   title={On the multiplication of free $N$-tuples of noncommutative random
   variables},
   journal={Amer. J. Math.},
   volume={118},
   date={1996},
   number={4},
   pages={799--837},
}

\bib{ozawa}{article}{
   author={Ozawa, N.},
   title={Amenability for unitary groups of simple monotracial C*-algebras},
   date={2023},
   eprint={https://arxiv.org/abs/2307.08267},
}

\bib{pisier}{article}{
	author={Pisier, Gilles},
	title={Strong convergence for reduced free products},
	journal={Infin. Dimens. Anal. Quantum Probab. Relat. Top.},
	volume={19},
	date={2016},
	number={2},
	pages={1650008, 22},
}

\bib{popa}{article}{
   author={Popa, S.},
   title={Free-independent sequences in type ${\rm II}_1$ factors and
   related problems},
   note={Recent advances in operator algebras (Orl\'{e}ans, 1992)},
   journal={Ast\'{e}risque},
   number={232},
   date={1995},
   pages={187--202},
}

\bib{nccw}{article}{
   author={Robert, L.},
   title={Classification of inductive limits of 1-dimensional NCCW
   complexes},
   journal={Adv. Math.},
   volume={231},
   date={2012},
   number={5},
   pages={2802--2836},
}

\bib{rordam}{article}{
	author={R\o rdam, Mikael},
	title={The stable and the real rank of $\scr Z$-absorbing $C^*$-algebras},
	journal={Internat. J. Math.},
	volume={15},
	date={2004},
	number={10},
	pages={1065--1084},
}

\bib{rordamencyc}{article}{
	author={R\o rdam, M.},
	title={Classification of nuclear, simple $C^*$-algebras},
	conference={
		title={Classification of nuclear $C^*$-algebras. Entropy in operator
			algebras},
	},
	book={
		series={Encyclopaedia Math. Sci.},
		volume={126},
		publisher={Springer, Berlin},
	},
	isbn={3-540-42305-X},
	date={2002},
	pages={1--145},
}

\bib{shlyakhtenko}{article}{
	author={Shlyakhtenko, Dimitri},
	title={Some applications of freeness with amalgamation},
	journal={J. Reine Angew. Math.},
	volume={500},
	date={1998},
	pages={191--212},
}

\bib{skoufranis}{article}{
	author={Skoufranis, Paul},
	title={On a notion of exactness for reduced free products of ${\rm
			C}^*$-algebras},
	journal={J. Reine Angew. Math.},
	volume={700},
	date={2015},
	pages={129--153},
}


\bib{TomsCounterexample}{article}{
	author={Toms, Andrew S.},
	title={On the classification problem for nuclear $C^\ast$-algebras},
	journal={Ann. of Math. (2)},
	volume={167},
	date={2008},
	number={3},
	pages={1029--1044},
}

\bib{voiculescu}{article}{
   author={Voiculescu, Dan},
   title={Symmetries of some reduced free product $C^\ast$-algebras},
   conference={
      title={Operator algebras and their connections with topology and
      ergodic theory},
      address={Bu\c{s}teni},
      date={1983},
   },
   book={
      series={Lecture Notes in Math.},
      volume={1132},
      publisher={Springer, Berlin},
   },
   isbn={3-540-15643-7},
   date={1985},
   pages={556--588},
}

\bib{voiculescu-mult}{article}{
	author={Voiculescu, Dan},
	title={Multiplication of certain noncommuting random variables},
	journal={J. Operator Theory},
	volume={18},
	date={1987},
	number={2},
	pages={223--235},
	issn={0379-4024},
	review={\MR{0915507}},
}

\bib{strongerdixmier}{article}{
	author={Wen, Shilin},
	author={Fang, Junsheng},
	author={Yao, Zhaolin},
	title={A stronger version of Dixmier's averaging theorem and some
		applications},
	journal={J. Funct. Anal.},
	volume={287},
	date={2024},
	number={8},
	pages={Paper No. 110569, 13},
}

\bib{WinterPure}{article}{
	author={Winter, Wilhelm},
	title={Nuclear dimension and $\scr{Z}$-stability of pure $\rm
		C^*$-algebras},
	journal={Invent. Math.},
	volume={187},
	date={2012},
	number={2},
	pages={259--342},
}

\bib{zhang}{article}{
	author={Zhang, Shuang},
	title={Matricial structure and homotopy type of simple $C^*$-algebras
		with real rank zero},
	journal={J. Operator Theory},
	volume={26},
	date={1991},
	number={2},
	pages={283--312},
}

\end{biblist}
\end{bibdiv}

\end{document}